\newcommand*{\affaddr}[1]{#1} 
\newcommand*{\email}[1]{\texttt{#1}}
\providecommand{\keywords}[1]
{
  \small	
  \textbf{\textit{Keywords---}} #1
}
\DeclareMathOperator*{\argmin}{argmin}
\def\fprod#1{\left\langle#1\right\rangle}
\newcommand\numberthis{\addtocounter{equation}{1}\tag{\theequation}}
\newtheorem{theorem}{Theorem}[section]
\newtheorem{corollary}{Corollary}[section]
\newtheorem{lemma}{Lemma}[section]
\newtheorem{remark}{Remark}
\newtheorem{definition}{Definition}[section]
\def\bmu{\mathbf{\mu}}
\def\bmu{\boldsymbol\mu}
\def\ba{\mathbf{a}}
\def\bg{\mathbf{g}}
\def\bh{\mathbf{h}}
\def\bu{\mathbf{u}}
\def\bv{\mathbf{v}}
\def\bw{\mathbf{w}}
\def\bx{\mathbf{x}}  
\def\by{\mathbf{y}}
\def\bz{\mathbf{z}}
\def\bH{\mathbf{H}}
\def\bI{\mathbf{I}}
\def\bU{\mathbf{U}}
\def\m{{\boldsymbol{\mu}}}
\def\cE{\mathcal{E}}
\def\cL{\mathcal{L}}
\def\cM{\mathcal{M}}
\def\cS{\mathcal{S}}
\def\cU{\mathcal{U}}
\def\mE{\mathbb{E}}
\def\mN{\mathbb{N}}
\def\mR{\mathbb{R}}
\def\smskip{\smallskip}
\def\texitem#1{\par\smskip\noindent\hangindent 25pt
               \hbox to 25pt {\hss #1 ~}\ignorespaces}
\def\norm#1{\|#1\|}
\newcommand{\BEAS}{\begin{eqnarray*}}
\newcommand{\EEAS}{\end{eqnarray*}}
\newcommand{\BEA}{\begin{eqnarray}}
\newcommand{\EEA}{\end{eqnarray}}
\newcommand{\BEQ}{\begin{eqnarray}}
\newcommand{\EEQ}{\end{eqnarray}}
\newcommand{\BIT}{\begin{itemize}}
\newcommand{\EIT}{\end{itemize}}
\newcommand{\BNUM}{\begin{enumerate}}
\newcommand{\ENUM}{\end{enumerate}}
\newcommand{\BA}{\begin{array}}
\newcommand{\EA}{\end{array}}
\newif\ifpagenumbering
\newsavebox{\theorembox}
\newsavebox{\lemmabox}
\newsavebox{\defnbox}
\newsavebox{\assbox}
\savebox{\theorembox}{\noindent\bf Theorem}
\savebox{\lemmabox}{\noindent\bf Lemma}
\savebox{\defnbox}{\noindent\bf Definition}
\savebox{\assbox}{\noindent\bf Assumption}
\newtheorem{assumption}{\usebox{\assbox}}
\begin{document}
\date{\today}
\title{\vspace{-1.5cm}\Large\bfseries Riemannian Stochastic Variance-Reduced Cubic Regularized Newton Method for Submanifold Optimization}

%

\author{%
Dewei Zhang and Sam Davanloo Tajbakhsh\thanks{Corresponding author}
\vspace{0.2cm}\\
\email{\{zhang.8705,davanloo.1\}@osu.edu \vspace{0.4cm}}\\
\affaddr{The Ohio State University}%
}

\maketitle

\vspace{-0.6cm}
\begin{abstract}
We propose a stochastic variance-reduced cubic regularized Newton algorithm to optimize the finite-sum problem over a Riemannian submanifold of the Euclidean space. The proposed algorithm requires a full gradient and Hessian update at the beginning of each epoch while it performs stochastic variance-reduced updates in the iterations within each epoch. The iteration complexity of $O(\epsilon^{-3/2})$ to obtain an $(\epsilon,\sqrt{\epsilon})$-second-order stationary point, i.e., a point with the Riemannian gradient norm upper bounded by $\epsilon$ and minimum eigenvalue of Riemannian Hessian lower bounded by $-\sqrt{\epsilon}$, is established when the manifold is embedded in the Euclidean space. Furthermore, the paper proposes a computationally more appealing modification of the algorithm which only requires an \emph{inexact} solution of the cubic regularized Newton subproblem with the same iteration complexity. The proposed algorithm is evaluated and compared with three other Riemannian second-order methods over two numerical studies on estimating the inverse scale matrix of the multivariate t-distribution on the manifold of symmetric positive definite matrices and estimating the parameter of a linear classifier on the Sphere manifold.
\end{abstract}


\keywords{Riemannian optimization, manifold optimization, stochastic optimization, cubic regularization, variance reduction.}

%
%
%
%
%
%
%
%
%
%
%
%
%
%
%

\vspace{-0.2cm}
\section{Introduction}\label{sec:intro}
We study the optimization of the finite-sum problem over a Riemannian manifold $\cM$ embedded in a Euclidean space $\cE$ as 
\begin{equation}\label{eq:main1}
\min_{\bx\in \cM \subseteq \cE} F(\bx)= \frac{1}{N} \sum^N_{i=1} f_i(\bx),
\end{equation}
where $N$ is a (possibly very large) positive integer. Manifold optimization has a range of applications in machine learning, statistics, control and robotics, e.g., in deep learning, low-rank matrix completion, sparse or nonnegative principal component analysis, or solving large-scale semidefinite programs
-- see \cite{hu2020brief,absil2019collection} and the references therein.
The finite-sum structure of the objective function in problem \eqref{eq:main1} specifically finds applications in machine learning and statistics for parameter estimation, and addition of the manifold constraint could have problem-specific, computational, or other reasons. Below, we provide two motivational examples for problem~\eqref{eq:main1}.

\vspace{-0.2cm}
\paragraph{Example~1 (Parameter estimation of the multivariate Student's t-distribution)}~As an important member of the family of elliptical distributions~\cite{domino2018selected}, the multivariate t-distribution has numerous applications in mathematical finance, survival analysis, biology, etc.~\cite{kotz2004multivariate}. For instance in mathematical finance~\cite{szego2002measures,de2004measuring,krzanowski1994multivariate}, the \emph{Student's t copula} $C_{\nu}:[0,1]^p\to[0,1]$ defined as
$C_{\nu}(\bu)= T_{\nu,\Sigma,\m}(T^{-1}_\nu(u_1),...,T^{-1}_\nu(u_n))$,
where $T_{\nu,\Sigma,\m}(\bx)$ is the multivariate Student's t cumulative density function (CDF) and $T_\nu^{-1}(\cdot)$ is the inverse of the marginal univariate Student's t CDF is used  to model or sample from multivariate Student's t-distribution~\cite{krzanowski1994multivariate}. 
As one of the core tasks, the maximum likelihood parameter estimation of the multivariate Student's t-distribution requires solving
\begin{equation}\label{student_t}
\max_{\Sigma\in\cS^p_{++},\bmu\in\mR^p} \ \ \frac{1}{N}\sum_{i=1}^N \log(t_{\nu}(\bx_i;\m,\Sigma)),
\end{equation}
where $t_{\nu}(\bx;\m,\Sigma)$ denotes the probability density function of the multivariate t-distribution 
and $\nu\in\mN_+$ is the degrees of freedom which is generally predetermined. Since the scale matrix $\Sigma$ should belong to the manifold of positive-definite matrices, problem \eqref{student_t} is an instance of problem \eqref{eq:main1}.

\vspace{-0.2cm}
\paragraph{Example~2 (Efficient training of deep neural networks)}~Training deep neural networks could be ``notoriously difficult" when the singular values of the hidden-to-hidden weight matrices deviates from one~\cite{arjovsky2016unitary}. In such cases optimization becomes difficult due to the \emph{vanishing} or \emph{exploding} gradient~\cite{arjovsky2016unitary,wisdom2016full}. This challenge can be circumvented, if the weight matrices are unitary with singular values equal to one. This can be achieved by requiring hidden-to-hidden weight matrices to belong to Stiefel  Manifold $St(p,n)\triangleq\{X\in\mR^{n\times p}:\ X^\top X=I_p\}$~\cite{absil2009optimization,boumal2020introduction} and training the model using a Riemannian optimization algorithm. The underlying optimization problem is an instance of \eqref{eq:main1} over the cartesian product of Stiefel manifolds. The orthonormality of the weight matrices improves the performance of deep neural networks~\cite{bansal2018can,li2020efficient}, reduces overfitting to improve generalization~\cite{cogswell2015reducing}, or stabilizes the distribution of activations over
layers~\cite{huang2018orthogonal}. In \cite{sun2017svdnet} and \cite{xie2017all} two convolutional neural networks are trained with orthonormal weight matrices for phase retrieval and image classification.

\vspace{-0.2cm}
\subsection{Related Work} \label{sec:related_work}
Numerous algorithms for standard unconstrained optimization~\cite{ruszczynski2011nonlinear} have been generalized to Riemannian manifolds~\cite{absil2009optimization,udriste2013convex,boumal2020introduction}. Some notable first-order algorithms include
gradient descent method~\cite{zhang2016first,boumal2019global}, conjugate gradient method~\cite{smith1994optimization,sato2015new}, stochastic gradient method~\cite{bonnabel2013stochastic,zhang2016riemannian,tripuraneni2018averaging}, accelerated methods~\cite{liu2017accelerated,ahn2020nesterov,zhang2018towards,criscitiello2020accelerated,alimisis2021momentum,alimisis2020continuous}, proximal gradient methods~\cite{ferreira2002proximal,bento2015proximal,bento2017iteration,de2016new,huang2021riemannian}. To guarantee convergence to a second-order stationary point, \cite{jin2019stochastic} investigates the Riemannian perturbed gradient descent that guarantees second-order stationarity without using second-order information with $\tilde{O}(1/\epsilon^2)$ iteration complexity. Other saddle-escape methods over manifolds were also studied in \cite{sun2019escaping} and \cite{criscitiello2019efficiently}.

In the context of second-order algorithms, the Newton method is extended to optimize over Riemannian manifolds in~\cite{luenberger1972gradient,gabay1982minimizing,smith1993geometric,smith1994optimization}. Specifically, \cite{smith1993geometric} and \cite{smith1994optimization} establish local quadratic convergence. Similar to the Newton method on Euclidean space, the Newton method for manifold optimization also suffers from two main drawbacks: first, it is possible that the Hessian matrix is degenerated at a point; second, it is possible that the iterates diverge, converge to a saddle point, or even a local maximum. In the Riemannian setting, the trust region method \cite{absil2004trust,absil2007trust,baker2008implicit,boumal2015riemannian,boumal2019global} and the cubic regularized Newton method \cite{zhang2018cubic} are extensions of their Euclidean counterparts to address these drawbacks. More specifically, \cite{boumal2019global} shows that the Riemannian trust region method obtains an $(\epsilon,\epsilon)$-second-order-stationary point (see Definition~\ref{def:eps_delta}) in $O(1/\epsilon^3)$ which matches its Euclidean counterpart~\cite{cartis2012complexity,cartis2014complexity}. Furthermore, the cubic regularized Newton method on manifolds \cite{zhang2018cubic} is shown to reach an $(\epsilon,\sqrt{\epsilon})$-second-order stationary point in $O(1/\epsilon^{1.5})$. Finally,~\cite{agarwal2018adaptive} extends the daptive cubic regularization method \cite{cartis2011adaptive} to Riemannian manifolds and establishes $O(1/\epsilon^{1.5})$ rate to obtain an $(\epsilon,\sqrt{\epsilon})$-second-order stationary point -- see also~\cite{qi2011numerical,hu2018adaptive}.

A common issue among second-order algorithms is their high computational cost to calculate the inverse of the Hessian operator. A Riemannian counterpart of the famous BFGS algorithm \cite{nocedal2006numerical} is proposed in \cite{ring2012optimization} which does not require calculating the inverse of the Hessian operator. 

To optimize functions with the finite-sum structure or those that are known through their approximate gradient and Hessian, inexact methods including first- and second-order Riemannian stochastic algorithms and their variance-reduced extensions are proposed in the literature.
As a generalization of~\cite{johnson2013accelerating}, the Riemannian stochastic variance-reduced gradient descent (SVRG) method was developed in \cite{zhang2016riemannian}. Furthermore, an extension of  Riemannian SVRG with computationally more efficient retraction and vector transport was developed in \cite{sato2019riemannian}. The paper also establishes global convergence properties of their method besides its local convergence rate. 
The Riemannian version of the stochastic recursive gradient method~\cite{nguyen2017stochastic} is proposed in~\cite{kasai2018riemannian}.
\cite{kasai2018inexact} proposes Riemannian trust region algorithms with inexact gradient and Hessian that allows inexact solution of the subproblem. Furthermore, a Riemannian stochastic variance-reduce quasi-Newton method is proposed in \cite{kasai2017riemannian} - see also \cite{roychowdhury2017accelerated}. For a recent review of first- and second-order Riemannian optimization algorithms, we refer the reader to \cite{hosseini2020recent} and \cite{sato2021riemannian} (specifically, Section~6.1 on stochastic methods).

\subsection{Contributions}
The major contributions of this paper are as follows:
(i) Motivated by \cite{pmlr-v80-zhou18d} and \cite{kovalev2019stochastic} in the Euclidean setting, we propose a stochastic variance-reduced cubic regularized Newton method (R-SVRC algorithm) to optimize over Riemannian manifolds.
(ii) We carefully analyze the worst-case complexity of the proposed algorithm to find a point that satisfy the first- and second-order necessary optimality conditions (i.e., a second-order stationarity point) when the cubic-regularized Newton subproblem is solved \emph{exactly} - see Theorem~\ref{main1} and Corollary~\ref{corollary1}. (iii) We performed the analysis of a computationally more appealing version of the algorithm, that allows solving the cubic-regularized Newton subproblem \emph{inexactly}, and established the same worst-case complexity bound - see Theorem~\ref{main2} and Corollary~\ref{cor:inexact}. The assumptions for our analysis are explicitly discussed in Section~\ref{sec:con_analys}. Finally, the performance of the proposed algorithm is evaluated and compared over two applications: 1. Estimating the scale matrix of Student's t-distribution over the symmetric positive definite manifold, 2. Learning the parameter of a linear classifier over a Sphere manifold. The implementation of the proposed algorithm in MATLAB with exact and inexact subproblem solvers is provided at \url{https://github.com/samdavanloo/R-SVRC}. To the best of our knowledge, this work is the first \emph{stochastic} Newton method with cubic regularization on Riemannian manifold.

\subsection{Preliminaries and Notation}
A Riemannnian manifold $(\cM,g)$ is a real smooth manifold $\cM$ equipped with a Riemannain metric $g$. The metric $g$ induces an inner product structure in each tangent space $T_{\bx} \cM$ associated with point $\bx\in\cM$. We denote the inner product of $\bu, \bv\in T_{\bx}\cM$ as $\fprod{\bu,\bv}_\bx=g_\bx(\bu,\bv)$, and the norm of $\bu$ is defined as $\norm{\bu}=\sqrt{g_x(\bu,\bu)}$. Furthermore, the angle between $\bu$ and $\bv$ is $\arccos(\fprod{\bu,\bv}_{\bx}/(\norm{\bu}\norm{\bv}))$. Given a smooth real-valued function $f$ on a Riemannian manifold $\cM$, Riemannian gradient and Hessian of $f$ at $\bx$ are denoted by $\text{grad}f(\bx)$ and $\text{Hess}f(\bx)$ (also for simplicity by $H_\bx$). For a symmetric operator, e.g. the Riemannian Hessian $H$ at $\bx\in\cM$, the operator norm of $H$ is defined as $\norm{H}_{op}=\sup\{\norm{H\eta}:\eta\in T_{\bx}\cM, \norm{\eta}=1\}$. An operator on $T_{\bx} \cM$ is positive semidefinite $H\succeq 0$ if $\fprod{H[\eta],\eta}\geq 0$, for any $\eta\in T_x {\cM}$. A geodesic is a constant speed curve $\gamma: [0,1]\rightarrow\cM$ that is locally distance minimizing. An exponential map $\text{Exp}_x: T_x \cM\rightarrow\cM$ maps $\bv\in T_x\cM$ to $\by\in\cM$, such that there is a geodesic $\gamma$ with $\gamma(0)=\bx$, $\gamma(1)=\by$, and $\dot{\gamma}(0)=\bv$. For two points $\bx$, $\by\in\cM$, and $d(\bx,\by)<\text{inj}(\cM)$, there is a unique geodesic. The exponential map has an inverse $\text{Exp}_x^{-1}: \cM\rightarrow T_x\cM$ and the geodesic is the unique shortest path with $\norm{\text{Exp}_{\bx}^{-1}(\by)}=\norm{\text{Exp}_{\by}^{-1}(\bx)}$ the geodesic distance between $\bx$, $\by\in\cM$. Parallel transport $\Gamma_\bx^\by: T_\bx \cM\rightarrow T_\by\cM$ maps a vector $\bv\in T_\bx \cM$ to $\Gamma_\bx^\by\bv\in T_\by\cM$, while preserving norm, and roughly speaking ``direction". A tangent vector of a geodesic $\gamma$ remains tangent if parallel transported along $\gamma$. Parallel transport also preserves inner products, i.e. $\fprod{\bu,\bv}_\bx=\fprod{\Gamma(\gamma)^\by_\bx \bu,\Gamma(\gamma)^\by_\bx \bv}_\by$. We denote the orthogonal projection operator onto $T_\bx\cM$ by $P_{\bx}$.

Let $(\cM,g)$ be a connected Riemannian manifold (see e.g. \cite{absil2009optimization}) which carries the structure of a metric space whose distance function is the arc length of a minimizing path between two points.

\begin{definition}[Riemannian metric] An inner product on $T_{\bx}\cM$ is a bilinear, symmetric, positive definitive function $\fprod{\cdot,\cdot}_{\bx}: T_{\bx}\cM\times T_{\bx}\cM\rightarrow \mR$. It induces a norm for tangent vectors as $\norm{u}_{\bx}=\sqrt{\fprod{u,u}_{\bx}}$. The smoothly varying inner product is called the Riemannian metric, i.e., if $\bv$, $\bw$ are two smooth vector fields on $\cM$ then the function $\bx\mapsto \fprod{\bv(\bx),\bw(\bx)}_{\bx}$ is smooth from $\cM$ to $\mR$.
\end{definition}

\begin{remark}
	 The inner product of two elements $\xi_{\bx}$ and $\zeta_{\bx}$ of $T_{\bx}\cM$ are interchangeably denoted by $g(\xi_{\bx},\zeta_{\bx})=g_{\bx}(\xi_{\bx},\zeta_{\bx})=\fprod{\xi_{\bx},\zeta_{\bx}}=\fprod{\xi_{\bx},\zeta_{\bx}}_{\bx}$.
\end{remark}

\begin{definition}[Injectivity radius~\cite{boumal2020introduction}]\label{def:injec_radius}
The injectivity radius of a Riemannian manifold $\cM$ at a point $\bx$, denoted by $\text{inj}(\bx)$, is the supremum over radius $r>0$ such that $\text{Exp}_{\bx}$ is defined and is a diffeomorphism on the open ball 
$B(\bx,r)=\{v\in T_{\bx}\cM: \norm{v}_{\bx}< r\}$. By the inverse function theorem, $\text{inj}(\bx)>0$. Furthermore, the injectivity radius of a Riemannian manifold $\cM$, i.e., $\text{inj}(\cM)$, is the infimum of $\text{inj}(\bx)$ over $\bx\in\cM$~(\cite{boumal2020introduction}, Definition~10.14).
\end{definition}

Consider the ball $U\triangleq B(\bx,\text{inj}(\bx))\subseteq T_{\bx}\cM$ in the tangent space at $\bx$. Its image $\cU\triangleq\text{Exp}_{\bx}(U)$ is a neighborhood of $\bx$ in $\cM$. By definition, $\text{Exp}_{\bx}:U\rightarrow\cU$ is a diffeomorphism, with well-defined, smooth inverse $\text{Exp}^{-1}_{\bx}:\cU\rightarrow U$. With these choices of domains, $v=\text{Exp}^{-1}_{\bx}(\by)$ is the unique shortest tangent vector at $\bx$ such that $\text{Exp}_{\bx}(v)=\by$.

\begin{definition}[Riemannian distance] The Riemannian distance on a connected Riemannian manifold $(\cM,g)$ is 
\begin{equation}\label{distance}
    d: \cM \times \cM \rightarrow \mR: d(\bx,\by)\triangleq\inf_{\gamma\in\Upgamma} L(\gamma),
\end{equation}
where $L(\gamma)=\int_a^b{\sqrt{g_{\gamma(t)}(\dot{\gamma}(t),\dot{\gamma}(t))}dt}$ and $\Upgamma$ is the set of all curves in $\cM$ joining points $\bx$ and $\by$. Specifically, if $\norm{v}_{\bx}<\text{inj}(\bx)$ then $d(\bx,\text{Exp}_{\bx}(v))=\norm{v}_{\bx}$.
\end{definition}

\begin{definition}[Riemannian gradient] Given a smooth real-valued function $f$ on a Riemannian manifold $\cM$, the Riemannian gradient of $f$ at a point $\bx\in\cM$, denoted by $\text{grad} f(\bx)$, is defined as the unique element of $T_{\bx}\cM$ that satisfies
\vspace{-0.2cm}
\begin{equation}
	\fprod{\text{grad} f(\bx),\xi}_{\bx}=\text{D} f(\bx)[\xi], \ \forall\xi\in T_{\bx}\cM.
\end{equation}
\end{definition}

Specifically, when $\cM$ is a Riemmannian submanifold of the Euclidean space $\mR^{m\times n}$,
\begin{equation}
	\text{grad} f(\bx)=\text{P}_{\bx} \nabla f(\bx),
\end{equation}
where $\text{P}_{\bx}$ is the Euclidean projection onto $T_{\bx}\cM$ which is a nonexpansive linear transformation. 
\begin{definition}[Riemannian Hessian] Given a real-valued function $f$ on a Riemannian manifold $\cM$, the Riemannian Hessian of $f$ at a point $\bx\in\cM$ is the linear mapping $\text{Hess} f(\bx)$ from $T_{\bx} \cM$ onto itself defined as 
\begin{equation}
	\text{Hess} f(\bx)[\xi]=\nabla_\xi \text{grad} f
\end{equation}
for all $\xi\in\cM$, where $\nabla$ is the Riemannian connection on $\cM$~\cite{absil2009optimization}. 
\end{definition}
When $\cM$ is a Riemmannian submanifold of the Euclidean space $\mR^{m\times n}$, the Riemannian Hessian of $f$ is written as 
\begin{equation}\label{Hessian_with_D}
	\text{Hess} f(\bx)[\xi]=\text{P}_{\bx} (\text{D} \text{grad} f(\bx)[\xi]),
\end{equation}
i.e., the classical directional derivatives followed by an orthogonal projection. For more information, e.g., refer to Proposition 5.3.2 in \cite{absil2009optimization}.

\vspace{-0.2cm}
\section{Proposed Algorithm} \label{sec:alg}
The proposed Riemannian Stochastic Variance-Reduced Cubic Regularization (R-SVRC) method is presented in Algorithm~\ref{alg:svr_cubic}. This algorithm is indeed semi-stochastic, which requires calculation of full gradient and Hessian at the beginning of each epoch $s$, i.e., the outer loop in the algorithm. However, within each epoch, there are $T$ iterations of the inner loop, which require calculation of stochastic variance-reduced gradient and Hessian by sampling $|I_g|$ and $|I_h|$ components, respectively.

From the computational perspective, the major step of the algorithm is to solve the cubic-regularized Newton subproblem. Under Assumption~\ref{assumption:embedded}, the manifold is embedded in $\mR^{m\times n}$; hence, the tangent vectors in $T_{\bx}\cM$ are naturally represented by $m\times n$ matrices (see~\cite{absil2009optimization}). Therefore, current solvers for cubic regularized problem in Euclidean space can be adopted~\cite{manopt}. Solving this generally nonconvex subproblem is discussed in more details below \eqref{m_cubic}. For computational gain, the paper also considers solving the subproblem inexactly. As long as the inexact solution satisfies the conditions in Definition~\ref{def:inexact_sol}, our analysis guarantees the results of the exact case. 


\begin{algorithm}[H]
\caption{Riemannian Stochastic Variance-Reduced Cubic Regularization (R-SVRC)}
\label{alg:svr_cubic}
\begin{algorithmic}[1]
\REQUIRE batch size parameters $b_g$, $b_h$, cubic penalty parameters $\sigma$, number of epochs $S$, epoch length $T$, and a starting point $\bx_0$.
\STATE Set $\hat{\bx}^1=\bx_0$
\FOR{$s=1,...,S$} 
\STATE $\bx^s_0=\hat{\bx}^s$
\STATE $\bg^s=\text{grad} F(\hat{\bx}^s)=\frac{1}{N}\sum^N_{i=1}\text{grad} f_i(\hat{\bx}^s)$; $\bH^s=\text{Hess}F(\hat{\bx}^s)=\frac{1}{N}\sum^N_{i=1}\text{Hess}f_i(\hat{\bx}^s)$
\FOR{$t=0,...,T-1$}
\STATE Sample index set $I_g$, $I_h$, s.t. $|I_g|=b_g$, $|I_h|=b_h$
\STATE Compute $\hat{\eta}^s_t\in T_{\hat{\bx}^s}$, s.t. $\text{Exp}_{\hat{\bx}^s}(\hat{\eta}^s_t)=\bx^s_t$
\STATE \scriptsize{$\bv^s_t=\Upgamma_{\hat{\bx}^s}^{\bx_t^s}(\bg^s)
+\frac{1}{b_g}\big(\sum_{i_t\in I_g}\text{grad} f_{i_t}(\bx^s_t)-\Upgamma_{\hat{\bx}^s}^{\bx_t^s}(\sum_{i_t\in I_g}\text{grad}f_{i_t}(\hat{\bx}^s))\big)
-\Upgamma_{\hat{\bx}^s}^{\bx_t^s}\big(\frac{1}{b_g}\sum_{i_t\in I_g}\text{Hess}f_{i_t}(\hat{\bx}^s)-\bH^s\big)\hat{\eta}^s_t$}

\STATE $\bh^s_t=\argmin_{\bh\in T_{\bx}\cM}\fprod{\bv^s_t,\bh}+\frac{1}{2}\fprod{\bU^s_t\bh,\bh}+\frac{\sigma}{6}\|\bh\|^3$\label{algo:subproblem_cubic}, where \\
	\qquad	\scriptsize{$\bU^s_t=\Upgamma_{\hat{\bx}^s}^{\bx_t^s}\circ\bH^s\circ\Upgamma^{\hat{\bx}^s}_{\bx_t^s}+\frac{1}{b_h}\sum_{j_t\in I_h}\text{Hess}f_{j_t}(\bx^s_t)-\frac{1}{b_h}\Upgamma_{\hat{\bx}^s}^{\bx_t^s}\circ(\sum_{j_t\in I_h}\text{Hess}f_{j_t}(\hat{\bx}^s))\circ\Upgamma^{\hat{\bx}^s}_{\bx_t^s}$}
\STATE $\bx^s_{t+1}=\text{Exp}_{\bx^s_t}(\bh^s_t)$
\ENDFOR
\STATE $\hat{\bx}^{s+1}=\bx^{s+1}_T$
\ENDFOR
\RETURN $\bx_{out}=\bx^s_t$, where $s$, $t$ are uniformly at random chosen from $s\in [S]$ and $t\in [T]$
\end{algorithmic}
\end{algorithm}

\section{Lipschitzian Smoothness on Riemannian Manifolds}\label{sec:smoothness}
\begin{definition}[g-smoothness~\cite{da1998geodesic,ferreira2019gradient}] A differentiable function $f: \cM\rightarrow\mR$ is said to be geodesically $L_g$-smooth if its gradient is $L_g$-Lipschitz, i.e., for any $\bx$, $\by\in\cM$ with $d(\bx,\by)<\text{inj}(\cM)$, 
\begin{equation}\label{l1}
    \norm{\text{grad} f(\bx)-\Gamma^\bx_\by \text{grad} f(\by)}_{\bx}\leq L_g d(\bx,\by),
\end{equation}
where $\Gamma^\bx_\by$ is the parallel transport from $\by$ to $\bx$ following the unique minimizing geodesic connecting $\bx$ and $\by$.
\end{definition}
It can be proven that if $f$ is $L_g$-smooth, then for any $\bx$, $\by\in\cM$ with $d(\bx,\by)<\text{inj}(\cM)$, 
\begin{equation}\label{lc1}
    |f(\by)-(f(\bx)+\fprod{\text{Exp}^{-1}_\bx(\by),\text{grad} f(\bx)}_{\bx})|\leq \frac{L_g}{2} d^2(\bx,\by)
\end{equation}
-- see, e.g. \cite{bento2017iteration}, Lemma 2.1.
\begin{definition}[H-smoothness~\cite{agarwal2020adaptive}] A twice differentiable function $f:\cM\rightarrow\mR$ is said to be geodesically $L_H$-smooth if its Hessian is $L_H$-Lipschitz, i.e., for any $\bx$, $\by\in\cM$ with $d(\bx,\by)<\text{inj}(\cM)$,
\begin{equation}\label{lc0}
    \norm{\text{Hess} f(\by)-\Gamma_\bx^\by\text{Hess} f(\bx)\Gamma_\by^\bx}_{op}\leq L_H d(\bx,\by).
\end{equation}
\end{definition}
It is shown in the following lemma that if $f$ is $L_H$-smooth, then for any $\bx,\by\in\cM$ with $d(\bx,\by)<\text{inj}(\cM)$, we have
\begin{equation}\label{lc2}
    |f(\by)-(f(\bx)+\fprod{\eta,\text{grad} f(\bx)}_\bx+\frac{1}{2}\fprod{\eta,\text{Hess}f(\bx)[\eta]}_\bx)|\leq \frac{L_H}{6} d^3(\bx,\by)
\end{equation}
and
\begin{equation}\label{lc3}
              \|\text{grad} f(\by)-\Gamma_\bx^\by \text{grad} f(\bx)-\Gamma_\bx^\by \text{Hess}f(\bx)\eta\|_\by\leq \frac{L_H}{2} d^2(\bx,\by),
\end{equation}
where $\eta=\text{Exp}^{-1}_\bx (\by)$.

\begin{lemma}[\cite{agarwal2020adaptive}, Proposition 3.2]\label{lemma:hessian}
If $f$ is $H$-smooth with constant $L_H$, then \eqref{lc2} and \eqref{lc3} hold. 
\end{lemma}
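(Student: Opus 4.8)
The plan is to reduce both inequalities to a one-dimensional Taylor argument along the minimizing geodesic, exactly as \eqref{lc1} follows from $L_g$-smoothness. Let $\gamma:[0,1]\to\cM$ be the unique minimizing geodesic with $\gamma(0)=\bx$, $\gamma(1)=\by$ and $\dot\gamma(0)=\eta=\text{Exp}_\bx^{-1}(\by)$; this exists because $d(\bx,\by)<\text{inj}(\cM)$. Since $\gamma$ is a constant-speed geodesic, its velocity field is parallel, so $\dot\gamma(t)=\Gamma_\bx^{\gamma(t)}\eta$, $\|\dot\gamma(t)\|\equiv\|\eta\|=d(\bx,\by)$, every subsegment of $\gamma$ is minimizing, hence $d(\bx,\gamma(t))=t\,d(\bx,\by)\le d(\bx,\by)<\text{inj}(\cM)$, and the parallel transports compose: $\Gamma_\bx^\by=\Gamma_{\gamma(t)}^\by\circ\Gamma_\bx^{\gamma(t)}$. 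I would then record two standard differentiation identities: for the scalar $h(t):=f(\gamma(t))$ one has $h'(t)=\fprod{\text{grad}f(\gamma(t)),\dot\gamma(t)}$ and, using metric compatibility of the Levi--Civita connection together with the geodesic equation $\tfrac{D}{dt}\dot\gamma=0$, $h''(t)=\fprod{\text{Hess}f(\gamma(t))[\dot\gamma(t)],\dot\gamma(t)}$; and for the $T_\by\cM$-valued field $g(t):=\Gamma_{\gamma(t)}^\by\,\text{grad}f(\gamma(t))$ one has $g'(t)=\Gamma_{\gamma(t)}^\by\big(\tfrac{D}{dt}\text{grad}f(\gamma(t))\big)=\Gamma_{\gamma(t)}^\by\,\text{Hess}f(\gamma(t))[\dot\gamma(t)]$, since parallel transport along $\gamma$ turns the covariant derivative along $\gamma$ into the ordinary derivative in the parallel frame.

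For \eqref{lc3}, I would note $g(0)=\Gamma_\bx^\by\text{grad}f(\bx)$, $g(1)=\text{grad}f(\by)$, and then, writing $\dot\gamma(t)=\Gamma_\bx^{\gamma(t)}\eta$ and $\Gamma_\bx^\by\text{Hess}f(\bx)[\eta]=\Gamma_{\gamma(t)}^\by\big(\Gamma_\bx^{\gamma(t)}\text{Hess}f(\bx)\Gamma_{\gamma(t)}^\bx\big)\Gamma_\bx^{\gamma(t)}[\eta]$, apply the fundamental theorem of calculus to $g$:
\[
\text{grad}f(\by)-\Gamma_\bx^\by\text{grad}f(\bx)-\Gamma_\bx^\by\text{Hess}f(\bx)[\eta]
=\int_0^1\Gamma_{\gamma(t)}^\by\Big(\text{Hess}f(\gamma(t))-\Gamma_\bx^{\gamma(t)}\text{Hess}f(\bx)\Gamma_{\gamma(t)}^\bx\Big)\Gamma_\bx^{\gamma(t)}[\eta]\,dt.
\]
Taking norms on $T_\by\cM$, using that parallel transport is an isometry (so the operator norm of the bracket is unchanged under conjugation), bounding that operator norm by $L_H\,d(\bx,\gamma(t))=L_H t\,d(\bx,\by)$ via \eqref{lc0} applied to the pair $(\bx,\gamma(t))$, and using $\|\Gamma_\bx^{\gamma(t)}\eta\|=\|\eta\|=d(\bx,\by)$, the integrand is at most $L_H t\,d^2(\bx,\by)$, and $\int_0^1 t\,dt=\tfrac12$ yields \eqref{lc3}.

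For \eqref{lc2}, integration by parts gives the exact identity $h(1)=h(0)+h'(0)+\int_0^1(1-t)h''(t)\,dt$, where $h(0)=f(\bx)$, $h(1)=f(\by)$ and $h'(0)=\fprod{\text{grad}f(\bx),\eta}_\bx$; moreover $\tfrac12\fprod{\eta,\text{Hess}f(\bx)[\eta]}_\bx=\int_0^1(1-t)\fprod{\eta,\text{Hess}f(\bx)[\eta]}_\bx\,dt$ since that integrand is constant in $t$. Subtracting, and rewriting $h''(t)=\fprod{\Gamma_{\gamma(t)}^\bx\text{Hess}f(\gamma(t))\Gamma_\bx^{\gamma(t)}[\eta],\eta}_\bx$ (parallel transport preserves the inner product), the difference of integrands is bounded in absolute value by $\|\Gamma_{\gamma(t)}^\bx\text{Hess}f(\gamma(t))\Gamma_\bx^{\gamma(t)}-\text{Hess}f(\bx)\|_{op}\|\eta\|^2\le L_H t\,d(\bx,\by)\|\eta\|^2$, again by \eqref{lc0}. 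Hence
\[
\Big|f(\by)-f(\bx)-\fprod{\text{grad}f(\bx),\eta}_\bx-\tfrac12\fprod{\eta,\text{Hess}f(\bx)[\eta]}_\bx\Big|\le L_H\,d(\bx,\by)\|\eta\|^2\int_0^1 t(1-t)\,dt=\tfrac{L_H}{6}d^3(\bx,\by),
\]
using $\int_0^1 t(1-t)\,dt=\tfrac16$ and $\|\eta\|=d(\bx,\by)$, which is \eqref{lc2}.

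The routine part is the two one-variable integrations; the part that deserves care is everything packed into ``along $\gamma$'': that $\gamma$ and all intermediate points $\gamma(t)$ stay inside a normal neighborhood where \eqref{lc0} and the formula $d(\bx,\gamma(t))=t\,d(\bx,\by)$ are legitimate (this is where $d(\bx,\by)<\text{inj}(\cM)$ is used), and the justification of the covariant-derivative identities $h''(t)=\fprod{\text{Hess}f(\gamma(t))[\dot\gamma(t)],\dot\gamma(t)}$ and $g'(t)=\Gamma_{\gamma(t)}^\by\tfrac{D}{dt}\text{grad}f(\gamma(t))$, which rest on metric compatibility of the connection, the geodesic equation, and the defining ODE of parallel transport. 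Once those facts are in place the estimates are driven entirely by the Hessian-Lipschitz bound \eqref{lc0}, mirroring the derivation of \eqref{lc1} from $L_g$-smoothness in \cite{bento2017iteration}.
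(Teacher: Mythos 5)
Your proof is correct. The paper does not actually prove this lemma---it simply cites Proposition~3.2 of \cite{agarwal2020adaptive}---and your argument is precisely the standard one behind that result: pull everything back along the minimizing geodesic via parallel transport (so that covariant derivatives become ordinary derivatives in the parallel frame), apply the fundamental theorem of calculus to $\Gamma_{\gamma(t)}^{\by}\,\mathrm{grad}f(\gamma(t))$ for \eqref{lc3} and the integral form of Taylor's theorem to $f\circ\gamma$ for \eqref{lc2}, and control the integrands with \eqref{lc0} applied to the pair $(\bx,\gamma(t))$ together with $d(\bx,\gamma(t))=t\,d(\bx,\by)$. The points you flag as needing care (staying within the injectivity radius so that all intermediate applications of \eqref{lc0} are legitimate, and the identities $h''(t)=\fprod{\mathrm{Hess}f(\gamma(t))[\dot\gamma(t)],\dot\gamma(t)}$ and $g'(t)=\Gamma_{\gamma(t)}^{\by}\tfrac{D}{dt}\,\mathrm{grad}f(\gamma(t))$) are exactly the right ones, and your handling of them is sound.
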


The Lipschitz-type conditions above are parallel to the conditions in the Euclidean setting \cite{nester2006cubic}. In general, it is not trivial to verify these conditions, or even determine their parameters. However, we know there is a broad class of functions on Euclidean space, which satisfy the Lipschitz continuity-related conditions. 
We conjectured similar properties as the Euclidean setting would imply \eqref{lc0}, if $\cM$ is embedded in the Euclidean space. 
In \cite{absil2009optimization}, it was proven that if the manifold is compact and the function has Lipschitz continuous gradient, then \eqref{l1} holds. \cite{boumal2019global} proved that if the manifold is compact and the function has lipschitz continuous gradient and Hessian, then \eqref{lc1} and \eqref{lc2} hold. 
In the following lemma, it is shown that \eqref{lc0} holds under the same conditions.

\begin{lemma}\label{lemma:H_smoothness_proof}
If $\cM$ is a compact submanifold of the Euclidean space $\cE$ and $f(\bx)$ has Lipschitz continuous Hessian in $\cE$ in the Euclidean sense, then \eqref{lc0} is satisfied.
\end{lemma}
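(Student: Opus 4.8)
\emph{Proof idea.} The plan is to move the estimate to the ambient Euclidean space $\cE$, use the embedded-submanifold formula for the Riemannian Hessian, and let compactness of $\cM$ turn the geodesic Lipschitz bound \eqref{lc0} into a finite sum of elementary Euclidean estimates. \emph{Step~1 (an ambient formula for $\text{Hess} f$).} Recall that for a submanifold of $\cE$ one has $\text{grad} f(\bx)=P_{\bx}\nabla f(\bx)$ and, by \eqref{Hessian_with_D}, $\text{Hess} f(\bx)[\xi]=P_{\bx}\big(\text{D}G(\bx)[\xi]\big)$ for any smooth ambient extension $G$ of the Riemannian gradient field (see also Proposition~5.3.2 in \cite{absil2009optimization}). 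Since $\cM$ is compact it admits a tubular neighborhood with a smooth nearest-point retraction, so $\bx\mapsto P_{\bx}$ extends smoothly off $\cM$ and $G(\by):=P_{\by}\nabla f(\by)$ is such an extension; differentiating it at $\bx\in\cM$ along $\xi\in T_{\bx}\cM$ and using $P_{\bx}^2=P_{\bx}$ gives
\begin{equation*}
\text{Hess} f(\bx)[\xi]=W_{\bx}[\xi]\,\nabla f(\bx)+P_{\bx}\nabla^2 f(\bx)[\xi],\qquad W_{\bx}[\xi]:=P_{\bx}\circ\big(\text{D}P_{\bx}[\xi]\big),
\end{equation*}
where $\nabla f,\nabla^2 f$ are the Euclidean gradient and Hessian and $W_{\bx}[\xi]$, being linear in $\xi$, is defined for any $\xi\in\cE$.

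\emph{Step~2 (uniform bounds from compactness).} On the compact $\cM$ the maps $\bx\mapsto P_{\bx}$ and $\bx\mapsto W_{\bx}$ are smooth, hence bounded and Lipschitz; write $C_W$ for a bound on $\norm{W_{\bx}}$ and $L_P,L_W$ for Lipschitz constants of $P,W$ over $\cM$. Likewise, since $f$ has a Lipschitz Hessian on $\cE$ in the Euclidean sense, $\nabla f$ and $\nabla^2 f$ are bounded over $\cM$ (say by $B_1,B_2$) and Lipschitz over $\cM$, the latter with the Euclidean Hessian-Lipschitz constant. I will also use $\norm{\bx-\by}_{\cE}\le d(\bx,\by)$ and the estimate that parallel transport is $O(d)$-close to the identity: for $\bx,\by\in\cM$ with $d(\bx,\by)<\text{inj}(\cM)$ and $v\in T_{\bx}\cM$,
\begin{equation*}
\norm{\Gamma_{\bx}^{\by}v-v}_{\cE}\le C_{\Gamma}\,d(\bx,\by)\,\norm{v}.
\end{equation*}
This last bound I would obtain by integrating the parallel-transport ODE along the unique minimizing constant-speed geodesic $\gamma$: the transported field $V(t)=\Gamma_{\bx}^{\gamma(t)}v$ stays tangent, so $P_{\gamma(t)}\dot{V}(t)=0$ gives $\dot{V}(t)=\text{D}P_{\gamma(t)}[\dot{\gamma}(t)]\,V(t)$, and then $\norm{\dot{\gamma}(t)}\equiv d(\bx,\by)$, $\norm{V(t)}\equiv\norm{v}$, together with the uniform bound on $\text{D}P$, yield $\norm{V(1)-V(0)}\le C_{\Gamma}d(\bx,\by)\norm{v}$.

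\emph{Step~3 (telescoping).} Fix $\bx,\by\in\cM$ with $d(\bx,\by)<\text{inj}(\cM)$ and a unit $\xi\in T_{\by}\cM$, and put $\zeta:=\Gamma_{\by}^{\bx}\xi\in T_{\bx}\cM$, so $\norm{\zeta}=1$ and $\norm{\xi-\zeta}_{\cE}\le C_{\Gamma}d(\bx,\by)$. Since $\big(\text{Hess} f(\by)-\Gamma_{\bx}^{\by}\text{Hess} f(\bx)\Gamma_{\by}^{\bx}\big)[\xi]=\text{Hess} f(\by)[\xi]-\Gamma_{\bx}^{\by}\text{Hess} f(\bx)[\zeta]$, I would substitute the Step~1 formula at $\by$ and at $\bx$ and insert intermediate vectors so that consecutive ones differ in exactly one ingredient: the base point of $P/\text{D}P/W$, the base point of $\nabla f/\nabla^2 f$, the argument $\xi$ versus $\zeta$, or the action of $\Gamma_{\bx}^{\by}$ versus the identity applied to a vector lying in $T_{\bx}\cM$. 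By Step~2 each such change is bounded by a fixed constant times $d(\bx,\by)$: moving the base point of $\nabla^2 f$ (resp.\ of $\nabla f$) costs at most the Euclidean Hessian-Lipschitz constant (resp.\ $C_W$ times the $\nabla f$-Lipschitz constant) times $d(\bx,\by)$; moving the base point of $P$ or $W$ costs at most $L_P B_2\,d(\bx,\by)$ or $L_W B_1\,d(\bx,\by)$; swapping $\xi$ for $\zeta$ costs at most $C_W B_1\norm{\xi-\zeta}_{\cE}$ (or $B_2\norm{\xi-\zeta}_{\cE}$); and replacing $\Gamma_{\bx}^{\by}$ by the identity on $v\in T_{\bx}\cM$ costs at most $C_{\Gamma}d(\bx,\by)\norm{v}$ with $\norm{v}\le C_W B_1$ or $\le B_2$. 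Summing the finitely many terms yields $\norm{\text{Hess} f(\by)[\xi]-\Gamma_{\bx}^{\by}\text{Hess} f(\bx)[\zeta]}_{\cE}\le L_H\,d(\bx,\by)$ for an explicit constant $L_H$ built from the constants above, and taking the supremum over unit $\xi\in T_{\by}\cM$ gives \eqref{lc0}.

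\emph{Main obstacle.} The argument has little deep content — it is mostly bookkeeping — but two places need care: the ``parallel transport $\approx$ identity'' bound of Step~2, which is the only point where the geometry of the compact embedding (a uniform bound on $\text{D}P$ and a positive injectivity radius) is genuinely used; and the telescoping of Step~3, where one must check that every intermediate quantity is an honest ambient object in $\cE$ and that the decomposition really changes one ingredient at a time, so the triangle inequality closes the estimate.
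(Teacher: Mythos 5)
Your proposal is correct and follows the same overall strategy as the paper's proof: both split $\text{Hess} f$ via the embedded-submanifold formula into a term built from the derivative of the projection acting on $\nabla f$ (your $W_{\bx}[\xi]\,\nabla f(\bx)$, the paper's $A$-term) plus the projected Euclidean Hessian (your $P_{\bx}\nabla^2 f(\bx)[\xi]$, the paper's $B$-term), and both compare the two terms at $\bx$ and $\by$ using compactness of $\cM$ to uniformize all bounds. Where you genuinely diverge is in how the transported-versus-untransported discrepancy is controlled. The paper packages the projection, its derivative, and the parallel transport into composite parameter-dependent operators $Q_{\bz,\tilde\by,\tilde\xi}$ and $R_{\tilde\by,\tilde\xi}(\bz)$, asserts each is Lipschitz in $\bz$ with a constant depending continuously on the frozen parameters, and then invokes compactness to make those constants uniform; this is soft and leaves the actual mechanism implicit. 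You instead prove a hard quantitative estimate $\norm{\Gamma_{\bx}^{\by}v-v}_{\cE}\leq C_{\Gamma}\,d(\bx,\by)\norm{v}$ by integrating the parallel-transport ODE $\dot V(t)=\mathrm{D}P_{\gamma(t)}[\dot\gamma(t)]\,V(t)$ along the minimizing geodesic, and then telescope through ambient intermediate quantities one ingredient at a time. Your route is more explicit and auditable precisely at the step where the paper is most hand-wavy, at the cost of a longer bookkeeping chain; the paper's route is shorter on the page but relies on the reader accepting the continuity-of-Lipschitz-constants argument. Both are valid, and both use the hypotheses ($d(\bx,\by)<\text{inj}(\cM)$ for the unique geodesic, compactness for uniform bounds, Euclidean Hessian-Lipschitzness for the $B$-term) in the same places.
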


\begin{proof}
Denote the orthogonal projection operator onto $T_{\bx}\cM$, i.e. the tangent space of $\cM$ at $\bx$, by $P_{\bx}$. Denote the Euclidean gradient and Hessian by $\nabla f(\bx)$ and $\nabla^2 f(\bx)$ correspondingly. For any $\by$, such that $d(\bx,\by)<\text{inj}(\cM)$ and any $\xi\in T_{\by}\cM$, s.t. $\norm{\xi}=1$, we have
\begin{align*}
    \text{Hess} f(\by)[\xi]&=P_{\by}(D(\by\to P_{\by}\nabla f(\by))(\by)[\xi])\\
    &=P_{\by}(D(\by\to P_{\by})(\by)[\xi][\nabla f(\by)])+P_{\by} (\nabla^2 f(\by)[\xi])\\
    &\equiv A_1+B_1,
\end{align*}
The first equality follows from \eqref{Hessian_with_D} and the second equality comes from the chain rule and the fact that the projection operator is linear. Similarly, we have
\begin{align*}
    \Gamma_{\bx}^{\by}\text{Hess} f(\bx) [\Gamma_{\by}^{\bx}\xi] &= \Gamma_{\bx}^{\by} P_{\bx}(D(\bx\to P_{\bx}\nabla f(\bx))(\bx)[\Gamma_{\by}^{\bx}\xi])\\
    &=\Gamma^{\by}_{\bx} P_{\bx}(D(\bx\to P_{\bx})(\bx)[\Gamma_{\by}^{\bx}\xi][\nabla f(\bx)])+\Gamma^{\by}_{\bx}P_{\bx} (\nabla^2 f(\bx)[\Gamma_{\by}^{\bx}\xi])\\
    &\equiv A_2+B_2.
\end{align*}
First, to quantify $\norm{A_1-A_2}$, we have, 
\small
\begin{align}
	\norm{A_1-A_2}&=\norm{O_{A_1}[\nabla f(\by)+\nabla f(\bx)-\nabla f(\bx)]-O_{A_2}[\nabla f(\bx)]}\\
	&=\norm{O_{A_1}[\nabla f(\by)-\nabla f(\bx)]+(O_{A_1}-O_{A_2})[\nabla f(\bx)]}\\
	&\leq\norm{O_{A_1}[\nabla f(\by)-\nabla f(\bx)]}+\norm{(O_{A_1}-O_{A_2})[\nabla f(\bx)]}\\
	&\leq \norm{O_{A_1}}_{op}\cdot\norm{\nabla f(\by)-\nabla f(\bx)}+\norm{(O_{A_1}-O_{A_2})[\nabla f(\bx)]}\label{abcd}
\end{align}
\normalsize
where $O_{A_1}\triangleq P_{\by}(D(\by\to P_{\by})(\by)[\xi][\cdot])$ and $O_{A_2}\triangleq \Gamma^{\by}_{\bx} P_{\bx}(D(\bx\to P_{\bx})(\bx)[\Gamma_{\by}^{\bx}\xi][\cdot])$.

Due to the smoothness and compactness of $\cM$ and $\norm{\xi}=1$, $\norm{P_{\by}(D(\by\to P_{\by})(\by)[\xi][\cdot])}_{op}$ exists and is uniformly upper bounded, i.e. there exists a finite $M_1$ independent of $\bx$, $\by$ and $\xi$, s.t. $\norm{P_{\by}(D(\by\to P_{\by})(\by)[\xi][\cdot])}_{op}\leq M_1$ for any $\bx$, $\by\in\cM$ and $\xi$, s.t. $\norm{\xi}=1$.

For any $\bz$, such that $d(\bz,\by)<\text{inj}(\cM)$, define $Q_{\bz,\by,\xi}\triangleq\Gamma^{\by}_{\bz} P_{\bz}(D(\bz\to P_{\bz})(\bz)[\Gamma_{\by}^{\bz}\xi][\cdot])$. Note that $O_{A_1}=Q_{\by,\by,\xi}$ and $O_{A_2}=Q_{\bx,\by,\xi}$.
For fixed $\tilde{\bx}$, $\tilde{\by}$ and $\tilde{\xi}$, $Q_{\bz,\tilde{\by},\tilde{\xi}}[\nabla f(\tilde{\bx})]$ is a continuously differentiable function of $\bz$ based on the conditions that the manifold is smooth and $f(\bx)$ has Lipschitz continuous Hessian. Since $\bz$ belongs to a compact set, $Q_{\bz,\tilde{\by},\tilde{\xi}}[\nabla f(\tilde{\bx})]$ is Lipschitz continuous on $\bz$, i.e. 
\begin{equation}\label{Q_difference}
	\norm{Q_{\bx,\tilde{\by},\tilde{\xi}}[\nabla f(\tilde{\bx})]-Q_{\by,\tilde{\by},\tilde{\xi}}[\nabla f(\tilde{\bx})]}\leq M_{\tilde{\bx},\tilde{\by},\tilde{\xi}}\norm{\bx-\by}, \forall \bx,\by\in \cM
\end{equation}
where $M_{\tilde{\bx},\tilde{\by},\tilde{\xi}}$ is a finite constant depending on $\tilde{\bx},\tilde{\by},\tilde{\xi}$. Especially, due to the smoothness of manifold and the function $f(\bx)$ has Lipschitz continuous Hessian, we have a continuous mapping from $\tilde{\bx},\tilde{\by},\tilde{\xi}$ to $M_{\tilde{\bx},\tilde{\by},\tilde{\xi}}$. Since $\tilde{\bx},\tilde{\by}\in \cM$, which is a compact set and $\norm{\tilde{\xi}}=1$, we have a finite constant $M_2$, s.t. $M_{\tilde{\bx},\tilde{\by},\tilde{\xi}}\leq M_2$ for all ${\tilde{\bx},\tilde{\by},\tilde{\xi}}$. In \eqref{Q_difference}, letting $\bx=\tilde{\bx}$, $\by=\tilde{\by}$, we have, 
\begin{align}
	\norm{Q_{\tilde{\bx},\tilde{\by},\tilde{\xi}}[\nabla f(\tilde{\bx})]-Q_{\tilde{\by},\tilde{\by},\tilde{\xi}}[\nabla f(\tilde{\bx})]}\leq M_{\tilde{\bx},\tilde{\by},\tilde{\xi}}\norm{\tilde{\bx}-\tilde{\by}}\leq M_2\norm{\tilde{\bx}-\tilde{\by}}.
\end{align}
Due to the arbitrariness of $\tilde{\bx}$, $\tilde{\by}$ and $\tilde{\xi}$, we conclude the second term in \eqref{abcd}, $\norm{(O_{A_1}-O_{A_2})[\nabla f(\bx)]}\leq M_2\norm{\bx-\by}$.

On the other hand, the gradient of a twice continuously differentiable function on a compact manifold is Lipschitz continuous. 
 Therefore, there exists a finite $L_1$, s.t.
\begin{align}
    \norm{A_1-A_2} \leq M_1\norm{\nabla f(\by)-\nabla f(\bx)}+M_2\norm{\bx-\by}
    \leq (M_1\cdot L_1+M_2) \norm{\by-\bx}
    \leq (M_1\cdot L_1+M_2)d(\bx,\by)\label{A_1-A_2},
\end{align}
where $d(\bx,\by)$ is the Riemannian distance between $\bx$ and $\by$. The third inequality holds since the manifold is embedded in the Euclidean space.

Second, to quantify $\norm{B_1-B_2}$, we define 
\begin{equation}
	R_{\by,\xi}(\bz)\triangleq \Gamma^{\by}_{\bz}P_{\bz} (\nabla^2 f(\bz)[\Gamma_{\by}^{\bz}\xi]).
\end{equation}

Fixing $\by$,$\xi$ to be $\tilde{\by}$ and $\tilde{\xi}$, $R_{\tilde{\by},\tilde{\xi}}(\bz)$ is Lipschitz continuous on $\bz$ due to the smoothness of the manifold and $\nabla^2 f(\bz)$ is Lipschitz continuous. Therefore, there exists a constant $N_{\tilde{\by},\tilde{\xi}}$ depending on $\tilde{\by}$ and $\tilde{\xi}$, s.t. $\norm{R_{\tilde{\by},\tilde{\xi}}(\bx)-R_{\tilde{\by},\tilde{\xi}}(\by)}\leq N_{\tilde{\by},\tilde{\xi}}\norm{\bx-\by}$ for all $\bx$, $\by\in\cM$. Especially, there is a continuous mapping from $\tilde{\by}$, $\tilde{\xi}$ to $N_{\tilde{\by},\tilde{\xi}}$. Since $\tilde{\by}$, $\tilde{\xi}$ are from compact sets, there exists a finite constant $M_3$, s.t. $\norm{R_{\tilde{\by},\tilde{\xi}}(\bx)-R_{\tilde{\by},\tilde{\xi}}(\by)}\leq N_{\tilde{\by},\tilde{\xi}}\norm{\bx-\by}\leq M_3 \norm{\bx-\by}$ for all $\bx$, $\by\in\cM$.

Letting $\bx=\tilde{\bx}$, $\by=\tilde{\by}$, and due to the arbitrariness of $\tilde{\bx}$, $\tilde{\by}$ and $\tilde{\xi}$, we have 
\begin{equation}\label{B_1-B_2}
	\norm{B_1-B_2}=\norm{R_{\by,\xi}(\by)-R_{\by,\xi}(\bx)}\leq M_3 \norm{\bx-\by}\leq M_3\cdot d(\bx,\by).
\end{equation}
Combining \eqref{A_1-A_2}, \eqref{B_1-B_2}, there exists a finite $L\triangleq M_1\cdot L_1+M_2+M_3$, s.t.
\begin{equation*}
    \norm{\text{Hess} f(\by)[\xi]-\Gamma_{\bx}^{\by}\text{Hess} f(\bx) [\Gamma_{\by}^{\bx}\xi]}\leq \norm{A_1-A_2}+\norm{B_1-B_2}\leq L\cdot d(\bx, \by)
\end{equation*}
Since $\xi$ is an arbitrary tangent vector, we have, 
\begin{equation*}
    \norm{\text{Hess} f(\by)-\Gamma_{\bx}^{\by}\text{Hess} f(\bx) \Gamma_{\by}^{\bx}}_{op}\leq L\cdot d(\bx, \by).
\end{equation*}
\end{proof}

\section{Complexity Analysis of the Proposed Algorithm}\label{sec:con_analys}

\begin{definition}[Optimal gap] For function $F(\cdot)$ and the initial point $\bx_0\in\cM$, define
\begin{equation}\label{delta}
    \Delta_F \triangleq F(\bx_0)-F^*,
\end{equation}
where $F^*=\inf_{\bx\in\cM} F(\bx)$. 
\end{definition}
Without loss of generality, we assume $\Delta_F < +\infty$ throughout this paper.
\begin{definition}[$(\epsilon,\delta)$-second-order stationary point] \label{def:eps_delta}
$\bx$ is a second-order stationary point of the function $F:\cM\rightarrow\mR$ if $\norm{\text{grad} F(\bx)}\leq \epsilon$ and $\lambda_{\min} (\text{Hess} F(\bx))\geq -\delta$ where $\text{grad} F(\bx)$ and $\text{Hess} F(\bx)$ are the Riemannian gradient and Hessian of $F$ at $\bx$ and $\lambda_{\min} (\text{Hess} F(\bx))\triangleq\inf_{\eta\in T_{\bx}\cM} \{\fprod{\text{Hess} F(\bx)\eta,\eta}_\bx/\norm{\eta}^2\}$.
\end{definition}
As in \cite{nester2006cubic}, we define
\begin{equation}\label{mux}
    \mu (\bx) \triangleq\max \{\norm{\text{grad} F(\bx)}^{3/2}, -\frac{\lambda^3_{min}(\text{Hess}F(\bx))}{L_H^{3/2}} \}.
\end{equation}
In particular, according to the definition \eqref{mux}, $\mu(\bx)\leq \epsilon^{3/2}$ holds if and only if 
\begin{equation}\label{mux_expand}
\norm{\text{grad} F(\bx)}\leq \epsilon, \quad \lambda_{\min} (\text{Hess} F(\bx))\geq -\sqrt{L_H \epsilon}.
\end{equation}
Therefore, in order to find an $(\epsilon, \sqrt{L_H \epsilon})-$ approximate local minimum of the function defined over $\cM$, it suffices to find $\bx\in\cM$ such that $\mu(\bx)\leq \epsilon^{3/2}$.

\begin{assumption}\label{assumption:smooth}
We assume that the objective function $F$ is bounded below, and its components $f_i,\ i=1,\cdots,N$ are twice continuously differentiable and they are g- and H-smooth.
\end{assumption}
\begin{assumption}\label{assumption:bounded}
	We assume that either i) functions $f_i,\ i=1,\cdots,N$ are Lipschitz continuous, or ii)  functions $f_i,\ i=1,\cdots,N$ are continuously differentiable and the manifold $\cM$ is compact.
\end{assumption}
\begin{remark}
	The g-smoothness of $f_i,\ i=1,\cdots,N$, in Assumption~\ref{assumption:smooth} implies that $\norm{\text{Hess} F(\bx)}_{op}$ is bounded. Furthermore, Assumption~\ref{assumption:bounded} implies that $\norm{\text{grad} F(\bx)}$ is bounded either by Lipschitz continuity of $f_i,\ i=1,\cdots,N,$ or by the Weierstrass theorem~\cite{rudin1964principles}. Hence, under Assumptions~\ref{assumption:smooth} and~\ref{assumption:bounded}, and based on the fact that the parallel transport is isometric, there exist two positive constants $c_g$ and $c_H$, such that 
\begin{equation}\label{assumptionremark:bound_hessian_grad}
	\norm{\bv^s_t}\leq c_g \text{ and } \norm{U^s_t}_{op}\leq c_H. 
\end{equation}
\end{remark}

While the above two assumptions are mainly related  to the objective function, the following three assumptions are related to the manifold.
\begin{assumption}\label{assumption:embedded}
We assume that $\cM$ is embedded in a vector space, e.g., Euclidean space. For the ease of presentation, we assume $\cM \subseteq \mR^{m\times n}$.
\end{assumption}
\begin{remark}
	Under Assumption~\ref{assumption:embedded}, the Riemannian metric $g_{\bx}(\cdot,\cdot)$ on the tangent space $T_{\bx}\cM$ is the restriction of the Euclidean metric. The norm induced by the Riemannian metric $\norm{\cdot}_{\bx}$ is the Euclidean norm.  
\end{remark}
\begin{assumption}\label{assumption:positiveinj}
We assume that the manifold has positive injectivity radius, i.e. $\text{inj}(\cM)\in(0,\infty]$-see Definition~\ref{def:injec_radius}. 
\end{assumption}
\begin{remark}
To provide few examples, the unit sphere has injectivity radius equal to $\pi$, Hadamard manifolds and Euclidean spaces have infinite injectivity radius, and compact Riemannian manifolds have positive injectivity radius~\cite{chavel2006riemannian}. Note that the Assumption~\ref{assumption:positiveinj} implies that the manifold is complete. 
\end{remark}
\begin{assumption}\label{assumption:curvature}
	We assume the sectional curvature of the Riemannian manifold $\cM$ is lower-bounded by $\upkappa$ - see \cite{lee2018introduction} for the definition of the sectional curvature. 
\end{assumption}
\begin{remark}
	Some manifolds that satisfy Assumption~\ref{assumption:curvature} include rotation group, hyperbolic manifold, the sphere, orthogonal groups, real projective space, Grassmann manifold, Stiefel manifold and compact subsets of the cone of positive definite matrices (see \cite{bonnabel2013stochastic,sra2015conic,boumal2020introduction}).
\end{remark}

Following the literature of the Newton method with cubic regularization \cite{nester2006cubic,cartis2011adaptive}, we define 
\begin{equation}\label{original_cubic}
	\tilde{m}(\bh)=\fprod{\text{grad} f,\bh}+\frac{1}{2}\fprod{\text{Hess} f[\bh],\bh}+\frac{\sigma}{6}\|\bh\|^3, \bh\in T_{\bx}\cM,
\end{equation}
which can be regarded as a cubic regularization of locally quadratic approximation of function $f$ -- see \cite{agarwal2018adaptive}. From \eqref{lc2}, we have $f(\text{Exp}_{\bx}(\eta))\leq \tilde{m}(\eta)$, for $\forall\eta\in T_{\bx} \cM$, if $\sigma\geq L_H$. From \eqref{original_cubic}, we define
\begin{equation}\label{m_cubic}
\bh^s_t=\argmin_{\bh\in T_{\bx}\cM}m^s_t(\bh),
\end{equation}
where
\begin{equation} 
m^s_t(\bh)\triangleq\fprod{\bv^s_t,\bh}+\frac{1}{2}\fprod{\bU^s_t[\bh],\bh}+\frac{\sigma}{6}\|\bh\|^3
\end{equation}
and $\bv^s_t$ and $\bU^s_t$ are the approximated Riemannian gradient and Hessian operator of the objective function. Generally, \eqref{m_cubic} and \eqref{original_cubic} are not convex problems. \cite{nester2006cubic} proposed a way to transform these subproblems into convex programs in one variable. Recently, results in \cite{carmon2019gradient} show that under mild conditions gradient descent approximately finds the \emph{global minimum} with the rate of $O(\epsilon^{-1}\log(1/\epsilon))$. \cite{cartis2011adaptive2} propose a Lanczos-based method to minimize \eqref{original_cubic} exactly. The gradient, conjugate gradient and Newton methods to minimize \eqref{original_cubic} are available in the software package provided in \cite{manopt}. 

\vspace{0.1cm}
We first provide some preliminary lemmas. Lemma~\ref{lemma:optimal} provides three identities that are used in the proofs of following lemmas. These identities are typical in the cubic regularization literature~\cite{nester2006cubic,cartis2011adaptive,cartis2011adaptive2}. Lemma~\ref{lemma:bound_stepsize_via_sigma} provides an upper bound on $\norm{\bh_s^t}$ which then provides a (lower) bound on the cubic regularization parameter $\sigma$ to have the iterates close enough to the epoch points. Finally, Lemmas~\ref{expectation1} and \ref{lemma:innerproduct} provide upper bounds on the norm difference of $\text{grad} F$ and $\text{Hess} F$ with their variance-reduced estimators, and on the inner products $\fprod{\text{grad} F(\bx^s_t) - \bv^s_t, \eta}$ and $\fprod{(\text{Hess} F(\bx^s_t) - U^s_t)[\eta], \eta}$ for $\eta\in T_{\bx^s_t}\cM$ which are used in the proofs of the main theorems.

\begin{lemma}\label{lemma:optimal}
Under Assumptions~\ref{assumption:embedded}, for the semi-stochastic gradient and Hessian, we have
\begin{equation}\label{optimal:1}
    \bv^s_t+\bU^s_t\bh^s_t+\frac{\sigma}{2}\norm{\bh^s_t}\bh^s_t=0,
\end{equation}
\begin{equation}\label{optimal:2}
    \bU^s_t+\frac{\sigma}{2}\norm{\bh^t_s}\bI\succeq 0,
\end{equation}
\begin{equation}\label{optimal:3}
    \fprod{\bv^s_t,\bh^s_t}+\frac{1}{2}\fprod{\bU^s_t \bh^s_t,\bh^s_t}+\frac{\sigma}{6}\norm{\bh^s_t}^3\leq-\frac{\sigma}{12}\norm{\bh^s_t}^3.
\end{equation}
\end{lemma}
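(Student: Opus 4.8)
The plan is to view the subproblem \eqref{m_cubic} purely as an unconstrained minimization over the vector space $T_{\bx^s_t}\cM$, on which --- by Assumption~\ref{assumption:embedded} --- the metric is the restriction of the Euclidean one and $\bU^s_t$ is self-adjoint (it is a sum of parallel-transport conjugates of Riemannian Hessians). Since $m^s_t$ is continuous and coercive (the $\tfrac{\sigma}{6}\|\cdot\|^3$ term dominates as $\|\bh\|\to\infty$), a global minimizer $\bh^s_t$ exists; and since $\bh\mapsto\|\bh\|^3=(\fprod{\bh,\bh})^{3/2}$ is $C^1$ with gradient $3\|\bh\|\bh$, the map $m^s_t$ is $C^1$. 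Equation \eqref{optimal:1} is then simply the first-order condition $0=\nabla m^s_t(\bh^s_t)=\bv^s_t+\bU^s_t\bh^s_t+\tfrac{\sigma}{2}\|\bh^s_t\|\bh^s_t$. In effect, \eqref{optimal:1}--\eqref{optimal:3} are the classical Nesterov--Polyak optimality relations for the cubic model, and the task is to reproduce their proofs in this Euclidean tangent space.

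For \eqref{optimal:2}, the key reduction is that $\bh^s_t$ also minimizes the quadratic part $q(\bh):=\fprod{\bv^s_t,\bh}+\tfrac12\fprod{\bU^s_t\bh,\bh}$ over the closed ball $B_r:=\{\bh:\|\bh\|\le r\}$, where $r:=\|\bh^s_t\|$: for $\bh\in B_r$ we have $m^s_t(\bh)\ge m^s_t(\bh^s_t)$ together with $\tfrac{\sigma}{6}\|\bh\|^3\le\tfrac{\sigma}{6}r^3$, hence $q(\bh)\ge q(\bh^s_t)$. If $r=0$, then testing $m^s_t$ along short segments through the origin forces $\bv^s_t=0$ and $\bU^s_t\succeq0$, which is \eqref{optimal:2}. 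If $r>0$ and \eqref{optimal:2} failed, I would pick a unit $\bz$ with $\fprod{(\bU^s_t+\tfrac{\sigma}{2}r\bI)\bz,\bz}<0$, flip its sign and perturb it slightly so that $\fprod{\bz,\bh^s_t}<0$, and set $\tau:=-2\fprod{\bh^s_t,\bz}>0$, which keeps $\|\bh^s_t+\tau\bz\|=r$; substituting $\bv^s_t=-(\bU^s_t+\tfrac{\sigma}{2}r\bI)\bh^s_t$ from \eqref{optimal:1} into $q(\bh^s_t+\tau\bz)-q(\bh^s_t)$ and simplifying with $\fprod{\bh^s_t,\bz}=-\tau/2$ collapses it to $\tfrac{\tau^2}{2}\fprod{(\bU^s_t+\tfrac{\sigma}{2}r\bI)\bz,\bz}<0$, contradicting minimality of $q$ over $B_r$ (in the degenerate case where every bad direction is orthogonal to $\bh^s_t$, one perturbs $m^s_t$ along the line $\bh^s_t+t\bz$ instead, with the same cancellation).

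Finally, \eqref{optimal:3} is algebra: \eqref{optimal:1} gives $\fprod{\bv^s_t,\bh^s_t}=-\fprod{\bU^s_t\bh^s_t,\bh^s_t}-\tfrac{\sigma}{2}\|\bh^s_t\|^3$, so that $m^s_t(\bh^s_t)=-\tfrac12\fprod{\bU^s_t\bh^s_t,\bh^s_t}-\tfrac{\sigma}{3}\|\bh^s_t\|^3$, while \eqref{optimal:2} evaluated at $\bh^s_t$ gives $\fprod{\bU^s_t\bh^s_t,\bh^s_t}\ge-\tfrac{\sigma}{2}\|\bh^s_t\|^3$; combining, $m^s_t(\bh^s_t)\le\tfrac{\sigma}{4}\|\bh^s_t\|^3-\tfrac{\sigma}{3}\|\bh^s_t\|^3=-\tfrac{\sigma}{12}\|\bh^s_t\|^3$. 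I expect \eqref{optimal:2} to be the only delicate point: the plain second-order necessary condition at $\bh^s_t$ only yields that $\bU^s_t+\tfrac{\sigma}{2}r\bI$ plus a positive-semidefinite rank-one perturbation in the direction $\bh^s_t$ is positive semidefinite, which is strictly weaker than \eqref{optimal:2}, so global (not merely local) optimality must genuinely be exploited via the sphere-perturbation argument above.
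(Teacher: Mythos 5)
Your proof is correct and takes essentially the same route as the paper, which only sketches the argument and defers to the Euclidean cubic-model optimality conditions (Lemma~24 of Zhou et al., i.e.\ the classical Nesterov--Polyak relations) after noting that Assumption~\ref{assumption:embedded} reduces the subproblem to one over a Euclidean subspace. You simply supply in full the standard argument the paper cites, including the sphere-perturbation step for \eqref{optimal:2} that the paper correctly flags as the part relying on global (not merely local) optimality.
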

\begin{proof}
	(sketch) Under Assumption~\ref{assumption:embedded}, i.e. the manifold is embedded in the Euclidean space, then the tangent space $T_{\bx}\cM$ in~\eqref{m_cubic} is isomorphic to subspace of the Euclidean space. Hence, the proof follows, e.g., from that of Lemma 24 in~\cite{pmlr-v80-zhou18d}. Indeed, the proof of \eqref{optimal:1} directly follows from the first-order optimality condition for a stationary point of \eqref{m_cubic}. The inequality \eqref{optimal:2} relies on the fact that $\bh^s_t$ is a global minimizer which will not hold when solving \eqref{m_cubic} inexactly. The proof of \eqref{optimal:3} is based on \eqref{optimal:2} and \eqref{optimal:1}.
\end{proof}

\begin{lemma}\label{lemma:bound_stepsize_via_sigma}
	Under Assumptions~\ref{assumption:smooth}-\ref{assumption:positiveinj}, given a constant $C>0$ and $\sigma>\frac{2(c_g+C\cdot c_H)}{C^2}$, we have $\norm{\bh^s_t}< C$. 
\end{lemma}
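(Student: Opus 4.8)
The plan is to turn the first-order optimality condition for the cubic subproblem into a scalar quadratic inequality for $r \triangleq \norm{\bh^s_t}$ and then exclude $r \geq C$ by contradiction. If $\bh^s_t = 0$ the claim is trivial, so one assumes $\bh^s_t \neq 0$ throughout, which is what legitimizes dividing by $r$ later.

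First I would take the inner product of the stationarity identity \eqref{optimal:1} from Lemma~\ref{lemma:optimal} (available since Assumption~\ref{assumption:embedded} holds) with $\bh^s_t$, obtaining
\[
\fprod{\bv^s_t,\bh^s_t} + \fprod{\bU^s_t\bh^s_t,\bh^s_t} + \frac{\sigma}{2}\norm{\bh^s_t}^3 = 0 .
\]
Then Cauchy--Schwarz gives $\fprod{\bv^s_t,\bh^s_t}\geq -\norm{\bv^s_t}\,\norm{\bh^s_t}$, the definition of the operator norm gives $\fprod{\bU^s_t\bh^s_t,\bh^s_t}\geq -\norm{\bU^s_t}_{op}\,\norm{\bh^s_t}^2$, and the uniform bounds $\norm{\bv^s_t}\leq c_g$, $\norm{\bU^s_t}_{op}\leq c_H$ from \eqref{assumptionremark:bound_hessian_grad} (in force under Assumptions~\ref{assumption:smooth}--\ref{assumption:bounded} together with the isometry of parallel transport) yield
\[
\frac{\sigma}{2}\norm{\bh^s_t}^3 \leq c_g\,\norm{\bh^s_t} + c_H\,\norm{\bh^s_t}^2 .
\]
Dividing by $\norm{\bh^s_t}>0$ produces $\frac{\sigma}{2} r^2 \leq c_g + c_H r$.

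To conclude I would argue by contradiction: if $r \geq C$, then $c_g/r \leq c_g/C$, so dividing the last inequality by $r$ gives $\frac{\sigma}{2} r \leq c_g/r + c_H \leq (c_g + C c_H)/C$, hence $r \leq \frac{2(c_g + C c_H)}{\sigma C}$; but the hypothesis $\sigma > \frac{2(c_g + C c_H)}{C^2}$ forces $\frac{2(c_g + C c_H)}{\sigma C} < C$, contradicting $r \geq C$. Therefore $\norm{\bh^s_t} < C$. There is no real obstacle here; the only points demanding care are isolating the $\bh^s_t = 0$ case so the division by $r$ is valid, and making sure the uniform constants $c_g, c_H$ of \eqref{assumptionremark:bound_hessian_grad} are indeed available (Assumptions~\ref{assumption:embedded} and~\ref{assumption:positiveinj} enter only to legitimize the Euclidean-metric identification and the well-definedness of $\bv^s_t$, $\bU^s_t$, and the exponential map). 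A slightly different route is to solve the quadratic $\frac{\sigma}{2} r^2 - c_H r - c_g \leq 0$ explicitly, giving $r \leq \big(c_H + \sqrt{c_H^2 + 2\sigma c_g}\big)/\sigma$, and then verify this upper bound is $< C$ under the stated lower bound on $\sigma$; the contradiction argument above is the cleaner of the two.
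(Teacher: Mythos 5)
Your proposal is correct and follows essentially the same route as the paper: both multiply the stationarity identity \eqref{optimal:1} by $\bh^s_t$, apply Cauchy--Schwarz and the uniform bounds \eqref{assumptionremark:bound_hessian_grad} to obtain $\tfrac{\sigma}{2}\norm{\bh^s_t}^2 - c_H\norm{\bh^s_t} - c_g \le 0$, and then deduce $\norm{\bh^s_t}<C$ from the lower bound on $\sigma$ (the paper via the explicit root bound \eqref{lemmaproof:1} and its monotonicity in $\sigma$, you via an equivalent contradiction argument, which you also note). Your explicit handling of the $\bh^s_t=0$ case before dividing by $\norm{\bh^s_t}$ is a small point of care the paper omits.
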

\begin{proof}
	Multiplying both sides of \eqref{optimal:1} by $\bh^s_t$, we obtain $\fprod{\bv^s_t,\bh^s_t}+\fprod{\bU^s_t \bh^s_t,\bh^s_t}+\frac{\sigma}{2}\norm{\bh^s_t}^3=0$.
	By Cauchy--Schwarz inequality, we have 
	$
		\frac{\sigma}{2}\norm{\bh^s_t}^3\leq\norm{\bv^s_t}\cdot\norm{\bh^s_t}+\norm{\bU^s_t}_{op}\cdot\norm{\bh^s_t}^2.
	$
	Dividing both sides by $\norm{\bh^s_t}$ and based on \eqref{assumptionremark:bound_hessian_grad}, we have 
	$
		\frac{\sigma}{2}\norm{\bh^s_t}^2-c_H\cdot \norm{\bh^s_t}-c_g\leq 0,
	$
	which implies 
	\begin{equation}\label{lemmaproof:1}
		\norm{\bh^s_t}\leq\frac{c_H+\sqrt{c_H^2+2\sigma c_g}}{\sigma}.
	\end{equation}
	Note that the right hand side of~\eqref{lemmaproof:1} is a monotonic decreasing function on $\sigma$. Hence, if $\sigma>\frac{2(c_g+C\cdot c_H)}{C^2}$, the right hand side of~\eqref{lemmaproof:1} is upper bounded by $C$, which implies $\norm{\bh^s_t} <C$. 
\end{proof}

\begin{remark}\label{remark:control_stepsize}
	In Lemma~\ref{lemma:bound_stepsize_via_sigma} as well as Lemma~\ref{lemma:bound_stepsize_via_sigma2}, we set $C=\frac{\text{inj}(\cM)}{T}$, where $T$ is the epoch length in Algorithm~\ref{alg:svr_cubic}. Then, for any epoch $s$ and iteration $t\in\{0,\cdots,T-1\}$, we have
	\begin{equation}\label{eq:UB_dist_iterate_epoch}
		d(\hat{\bx}^s,\bx^s_t)\leq \sum_{i=1}^{t}d(\bx^s_{i-1},\bx^s_i)\leq \sum_{i=1}^{t}\norm{\bh^s_i}<\text{inj}(\cM). 
	\end{equation}
	This inequality guarantees line 7 in Algorithm~\ref{alg:svr_cubic} is attained. In the following, we assume $\sigma$ is large enough such that
	\begin{equation}\label{cond:sigma1}
		\sigma> \frac{2(c_g T^2+\text{inj}(\cM)c_H T)}{(\text{inj}(\cM))^2},
	\end{equation}
	hence, the distance between the iterate $\bx^s_t$ and $\hat{\bx}^s$ is smaller than $\text{inj}(\cM)$. 
\end{remark}

\noindent In the proof of Lemmas \ref{expectation1} and \ref{lemma:innerproduct}, the crucial identity is the Lyapunov Inequality in \cite{durrett2019probability} and a couple of matrix concentration inequalities~\cite{mackey2014matrix}. Since there is no essential difference between Lemmas~25-27 in \cite{pmlr-v80-zhou18d} and our setting, we refer readers to \cite{pmlr-v80-zhou18d} and references therein.

\begin{lemma}\label{expectation1}
Under Assumptions~\ref{assumption:smooth}-\ref{assumption:positiveinj}, for the semi-stochastic gradient $\bv^s_t$ and semi-stochastic Hessian $\bU^s_t$, we have 
\begin{align}
    \mE_{I_g} \norm{\text{grad} F(\bx^s_t)-\bv^s_t}^{3/2} &\leq \frac{L_H^{3/2}}{b_g^{3/4}}\norm{\text{Exp}^{-1}_{\hat{\bx}^s}(\bx^s_t)}^3,\\
    \mE_{I_h} \norm{\text{Hess} F(\bx^s_t)-\bU^s_t}_{op}^3 &\leq 64L_H^3(\rho+\rho^2)^3 \norm{\text{Exp}^{-1}_{\hat{\bx}^s}(\bx^s_t)}^3,
\end{align}
where $\rho=\sqrt{\frac{2e\log mn}{b_h}}$. 
\end{lemma}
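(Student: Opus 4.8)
The plan is to bound the two moments by viewing the semi-stochastic estimators $\bv^s_t$ and $\bU^s_t$ as centered sums over the sampled index sets $I_g$ and $I_h$, and then to apply (i) the Lyapunov/Jensen inequality to reduce the $3/2$-th and $3$-rd moments to second and higher even moments, and (ii) a matrix Bernstein / Rosenthal-type concentration inequality from \cite{mackey2014matrix}. The key structural observation is that both quantities are built so that their conditional expectations (over $I_g$, resp. $I_h$) equal the true Riemannian gradient, resp. Hessian, at $\bx^s_t$: this is exactly why the variance-reduced construction in lines 8--9 of Algorithm~\ref{alg:svr_cubic} parallel-transports the epoch full gradient/Hessian and subtracts the corresponding sampled batch at $\hat{\bx}^s$. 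Concretely, writing $\eta\triangleq\text{Exp}^{-1}_{\hat{\bx}^s}(\bx^s_t)$ and $\hat{\eta}^s_t$ as in line 7, one checks that
\begin{equation*}
\text{grad} F(\bx^s_t)-\bv^s_t=\frac{1}{b_g}\sum_{i\in I_g}\Big(\big[\text{grad} f_i(\bx^s_t)-\Gamma_{\hat{\bx}^s}^{\bx^s_t}\text{grad} f_i(\hat{\bx}^s)-\Gamma_{\hat{\bx}^s}^{\bx^s_t}\text{Hess} f_i(\hat{\bx}^s)\hat{\eta}^s_t\big]-\big[\text{same with }F\big]\Big),
\end{equation*}
a sum of $b_g$ i.i.d. mean-zero terms; each bracketed term is, by the $H$-smoothness estimate \eqref{lc3} applied to $f_i$ (and to $F$), bounded in norm by $\tfrac{L_H}{2}d^2(\hat{\bx}^s,\bx^s_t)=\tfrac{L_H}{2}\norm{\eta}^2$.

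Next I would carry out the moment computation. For the gradient term, the Lyapunov inequality gives $\mE_{I_g}\norm{\text{grad} F(\bx^s_t)-\bv^s_t}^{3/2}\le\big(\mE_{I_g}\norm{\text{grad} F(\bx^s_t)-\bv^s_t}^2\big)^{3/4}$; since the summands are i.i.d. mean-zero, $\mE_{I_g}\norm{\cdot}^2=\tfrac{1}{b_g}\mE\norm{\text{single term}}^2\le\tfrac{1}{b_g}\big(\tfrac{L_H}{2}\norm{\eta}^2\big)^2\cdot 4=\tfrac{L_H^2}{b_g}\norm{\eta}^4$ up to the constant bookkeeping, and raising to the $3/4$ power yields the claimed $L_H^{3/2}b_g^{-3/4}\norm{\eta}^3$. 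For the Hessian term, $\text{Hess} F(\bx^s_t)-\bU^s_t$ is likewise a sum of $b_h$ i.i.d. mean-zero self-adjoint operators, each of operator norm at most $2\cdot\tfrac{L_H}{2}d(\hat{\bx}^s,\bx^s_t)=L_H\norm{\eta}$ by the $H$-smoothness definition \eqref{lc0}. Applying the matrix Bernstein inequality in dimension $mn$ (tangent vectors are $m\times n$ matrices under Assumption~\ref{assumption:embedded}) gives a tail bound with variance proxy $\sim L_H^2\norm{\eta}^2/b_h$ and, after integrating the tail to get the third moment, the factor $\rho=\sqrt{2e\log(mn)/b_h}$ appears; the $(\rho+\rho^2)^3$ shape and the constant $64$ come from combining the ``small deviation'' ($\sim$ variance) and ``large deviation'' ($\sim$ max norm) regimes of the Bernstein bound, exactly as in Lemma~27 of \cite{pmlr-v80-zhou18d}.

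The main obstacle, and the only place the Riemannian setting genuinely intervenes, is justifying that the parallel-transported quantities behave exactly as their Euclidean analogues: specifically, that $\Gamma_{\hat{\bx}^s}^{\bx^s_t}$ is a linear isometry (so norms of transported vectors/operators are preserved, and $\Gamma\circ H\circ\Gamma^{-1}$ remains self-adjoint with the same operator norm), that $\hat\eta^s_t$ in line 7 is well-defined (guaranteed by Remark~\ref{remark:control_stepsize}/\eqref{eq:UB_dist_iterate_epoch} once $\sigma$ satisfies \eqref{cond:sigma1}, so the iterates stay within $\text{inj}(\cM)$ of the epoch anchor), and that the $H$-smoothness bounds \eqref{lc0} and \eqref{lc3} are applicable to each $f_i$ (Assumption~\ref{assumption:smooth}) and to $F$ (averaging preserves $L_H$-smoothness). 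Once these are in place, the tangent space $T_{\bx^s_t}\cM$ is isometrically a subspace of $\mR^{m\times n}$, and every probabilistic step is identical to the Euclidean argument. Accordingly, I would state the two displays, note that the summands are i.i.d. mean-zero with the norm bounds just derived, invoke the Lyapunov inequality \cite{durrett2019probability} for the first bound and the matrix concentration inequalities \cite{mackey2014matrix} for the second, and refer to Lemmas~25--27 of \cite{pmlr-v80-zhou18d} for the detailed constant tracking, as the paper already signals in the sentence preceding the lemma.
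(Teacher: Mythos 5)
Your proposal is correct and follows exactly the route the paper intends: the paper itself gives no detailed proof of this lemma, instead pointing to the Lyapunov inequality, the matrix concentration inequalities of Mackey et al., and Lemmas~25--27 of Zhou et al., and your reconstruction (mean-zero i.i.d. decomposition via the variance-reduced construction, the bound $\tfrac{L_H}{2}\norm{\eta}^2$ per term from \eqref{lc3}, Lyapunov to pass from the $3/2$- to the second moment, and matrix Bernstein in dimension $mn$ for the Hessian) is precisely that argument, with the Riemannian ingredients (isometry of parallel transport, well-definedness of $\hat\eta^s_t$, $H$-smoothness of each $f_i$ and of $F$) correctly identified as the only new justifications needed. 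The only nit is a constant: \eqref{lc0} bounds each bracketed Hessian term by $L_H d(\hat{\bx}^s,\bx^s_t)$ rather than $\tfrac{L_H}{2}d$, so each summand has operator norm at most $2L_H\norm{\eta}$, but since you defer the constant tracking to Lemma~27 of Zhou et al.\ this does not affect the argument.
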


\begin{lemma}\label{lemma:innerproduct}
For any $\eta\in T_{\bx^s_t}\cM$ and $M>0$, we have
\begin{equation}
    \fprod{\text{grad} F(\bx^s_t)-\bv_t^s,\eta}\leq\frac{M}{27}\norm{\eta}^3+\frac{2\norm{\text{grad} F(\bx^s_t)-\bv_t^s}^{3/2}}{M^{1/2}},
\end{equation}
\begin{equation}
    \fprod{(\text{Hess} F(\bx^s_t)-\bU_t^s)[\eta],\eta}\leq\frac{2M}{27}\norm{\eta}^3+\frac{27}{M^2}\norm{\text{Hess} F(\bx^s_t)-\bU_t^s}_{op}^3.
\end{equation}
\end{lemma}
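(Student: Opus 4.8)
The plan is to reduce each of the two estimates to a scalar Young's inequality after one application of Cauchy--Schwarz (respectively, a Rayleigh-quotient bound). No assumption on $\cM$ or on $F$ enters; both inequalities are pointwise algebraic facts once the inner-product reduction is in place.

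For the gradient estimate, I would first write, using Cauchy--Schwarz in the inner product space $(T_{\bx^s_t}\cM,\fprod{\cdot,\cdot}_{\bx^s_t})$,
\[
\fprod{\text{grad} F(\bx^s_t)-\bv^s_t,\eta}\leq \norm{\text{grad} F(\bx^s_t)-\bv^s_t}\cdot\norm{\eta},
\]
so it suffices to prove the scalar bound $ab\leq \tfrac{M}{27}b^3+\tfrac{2}{M^{1/2}}a^{3/2}$ for all $a,b\geq 0$. This follows from Young's inequality with conjugate exponents $3/2$ and $3$, namely $xy\leq \tfrac{2}{3}x^{3/2}+\tfrac{1}{3}y^3$, applied to $x=\lambda a$ and $y=b/\lambda$ with the choice $\lambda=(9/M)^{1/3}$: then $\tfrac{1}{3\lambda^3}=\tfrac{M}{27}$ and $\tfrac{2}{3}\lambda^{3/2}=\tfrac{2}{3}\cdot\tfrac{3}{M^{1/2}}=\tfrac{2}{M^{1/2}}$, which gives exactly the stated constants.

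For the Hessian estimate, since $\text{Hess} F(\bx^s_t)-\bU^s_t$ is self-adjoint on $T_{\bx^s_t}\cM$, its Rayleigh quotient is dominated by its operator norm, so
\[
\fprod{(\text{Hess} F(\bx^s_t)-\bU^s_t)[\eta],\eta}\leq \norm{\text{Hess} F(\bx^s_t)-\bU^s_t}_{op}\cdot\norm{\eta}^2,
\]
and it remains to show $ab^2\leq \tfrac{2M}{27}b^3+\tfrac{27}{M^2}a^3$ for $a,b\geq 0$. I would again invoke Young's inequality, this time with conjugate exponents $3$ and $3/2$, i.e. $xy\leq \tfrac{1}{3}x^3+\tfrac{2}{3}y^{3/2}$, at $x=\mu a$, $y=b^2/\mu$ with $\mu=(81/M^2)^{1/3}$; then $\tfrac{1}{3}\mu^3=\tfrac{27}{M^2}$ and $\tfrac{2}{3}\mu^{-3/2}=\tfrac{2}{3}\cdot\tfrac{M}{9}=\tfrac{2M}{27}$, matching the claim.

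The only thing requiring care is the bookkeeping of the scaling parameters $\lambda$ and $\mu$ so that the numerical constants $27$ and $2$ come out exactly as stated; there is no analytic or geometric obstacle, and the inner-product reductions are legitimate because $T_{\bx^s_t}\cM$ is a genuine finite-dimensional inner product space under $\fprod{\cdot,\cdot}_{\bx^s_t}$ and $\text{Hess} F(\bx^s_t)-\bU^s_t$ is symmetric with respect to it.
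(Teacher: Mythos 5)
Your argument is correct: both reductions (Cauchy--Schwarz for the gradient term, the operator-norm bound for the Hessian term) are valid, and the weighted Young's inequalities with $\lambda=(9/M)^{1/3}$ and $\mu=(81/M^2)^{1/3}$ reproduce the constants $\tfrac{M}{27}$, $\tfrac{2}{M^{1/2}}$, $\tfrac{2M}{27}$, $\tfrac{27}{M^2}$ exactly. This is the same approach the paper relies on (it defers the proof to Lemmas~25--27 of \cite{pmlr-v80-zhou18d}, which argue precisely via Cauchy--Schwarz and Young's inequality), so nothing further is needed.
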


The following Lemmas~\ref{lemma:small1} and \ref{lemma:small2} provide an upper bound on $\norm{\text{grad}F}$ and a lower bound on $\lambda_{\min}(\text{Hess}F)$, respectively.

\begin{lemma}\label{lemma:small1}
  Under Assumptions~\ref{assumption:smooth}-\ref{assumption:positiveinj}, if $\sigma\geq 2 L_H$ and also satisfies~\eqref{cond:sigma1}, then for any $\bh\in T_{\bx^s_t}\cM$ such that $\norm{\bh}< \text{inj}(\cM)$, we have 
  \begin{equation}\label{lemm44}
       \norm{\mbox{grad F} (\text{Exp}_{\bx^s_t}(\bh))}\leq \sigma\norm{\bh}^2+\norm{\text{grad} F(\bx^s_t)-\bv^s_t}+\frac{1}{\sigma} \norm{\text{Hess} F(\bx^s_t)-\bU^s_t}^2_{op}+\norm{\nabla m^s_t(\bh)}.
  \end{equation}
\end{lemma}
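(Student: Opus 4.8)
The plan is to bound $\norm{\text{grad}F(\by)}$, where $\by=\text{Exp}_{\bx^s_t}(\bh)$, by inserting and subtracting the parallel-transported first-order Taylor expansion of $\text{grad}F$ around $\bx^s_t$ and controlling each resulting piece. Write $\bx=\bx^s_t$. Since $\norm{\bh}<\text{inj}(\cM)\le\text{inj}(\bx)$, the point $\by$ is well defined, $\bh=\text{Exp}^{-1}_{\bx}(\by)$, and $d(\bx,\by)=\norm{\bh}$, so that \eqref{lc3} applies. First I would use the decomposition
\begin{align*}
\text{grad}F(\by) &= \bigl(\text{grad}F(\by)-\Gamma_{\bx}^{\by}\text{grad}F(\bx)-\Gamma_{\bx}^{\by}\text{Hess}F(\bx)[\bh]\bigr)+\Gamma_{\bx}^{\by}\bigl(\text{grad}F(\bx)-\bv^s_t\bigr) \\
&\quad +\Gamma_{\bx}^{\by}\bigl((\text{Hess}F(\bx)-\bU^s_t)[\bh]\bigr)+\Gamma_{\bx}^{\by}\bigl(\bv^s_t+\bU^s_t[\bh]\bigr),
\end{align*}
and then replace the last term using the identity $\nabla m^s_t(\bh)=\bv^s_t+\bU^s_t[\bh]+\tfrac{\sigma}{2}\norm{\bh}\bh$, which is the gradient of the cubic model computed by the same differentiation as in the proof of Lemma~\ref{lemma:optimal} but evaluated at a general $\bh$ rather than at $\bh^s_t$. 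Applying the triangle inequality and the isometry of parallel transport then leaves four norm terms plus a leftover $\tfrac{\sigma}{2}\norm{\bh}^2$.

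Next I would bound each term. The first parenthesis is $\le\tfrac{L_H}{2}\norm{\bh}^2$ by \eqref{lc3} (this is where H-smoothness from Assumption~\ref{assumption:smooth} enters); the gradient-error term contributes exactly $\norm{\text{grad}F(\bx^s_t)-\bv^s_t}$; the model-gradient term contributes $\norm{\nabla m^s_t(\bh)}$; and the Hessian-error term satisfies $\norm{(\text{Hess}F(\bx)-\bU^s_t)[\bh]}\le\norm{\text{Hess}F(\bx^s_t)-\bU^s_t}_{op}\norm{\bh}$. The one mildly delicate step is converting this last product into the stated quadratic-in-$(\text{Hess}$ error$)$ form: an application of Young's inequality $ab\le\tfrac1\sigma a^2+\tfrac{\sigma}{4}b^2$ with $a=\norm{\text{Hess}F(\bx^s_t)-\bU^s_t}_{op}$ and $b=\norm{\bh}$ yields $\tfrac1\sigma\norm{\text{Hess}F(\bx^s_t)-\bU^s_t}_{op}^2+\tfrac{\sigma}{4}\norm{\bh}^2$.

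Finally I would collect the coefficients of $\norm{\bh}^2$, which total $\tfrac{\sigma}{2}+\tfrac{\sigma}{4}+\tfrac{L_H}{2}$, and use the hypothesis $\sigma\ge 2L_H$ (so $\tfrac{L_H}{2}\le\tfrac{\sigma}{4}$) to bound this sum by $\sigma$, which gives \eqref{lemm44}. Condition \eqref{cond:sigma1} and Assumptions~\ref{assumption:bounded}--\ref{assumption:positiveinj} are used only to guarantee that $\by$ stays within the injectivity radius so the exponential inverse and \eqref{lc3} are legitimate; the inequality itself is essentially bookkeeping once the decomposition and the Young step are fixed. I therefore expect the only obstacle to be the (minor) care needed to keep track of which tangent space each parallel transport maps into and to verify that the constant arithmetic is compatible with $\sigma\ge 2L_H$.
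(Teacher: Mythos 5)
Your proposal is correct and follows essentially the same route as the paper's proof: the identical add-and-subtract decomposition (the paper just writes it after pulling back via $\Gamma^{-1}$ rather than transporting forward, which is equivalent by isometry), the bound $\tfrac{L_H}{2}\norm{\bh}^2\le\tfrac{\sigma}{4}\norm{\bh}^2$ from \eqref{lc3}, the identification of $\nabla m^s_t(\bh)$, and the same Young's inequality $ab\le\tfrac{1}{\sigma}a^2+\tfrac{\sigma}{4}b^2$ on the Hessian-error term. The coefficient bookkeeping $\tfrac{\sigma}{2}+\tfrac{\sigma}{4}+\tfrac{\sigma}{4}=\sigma$ matches the paper exactly.
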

\begin{proof}
~For simplicity, we denote $\text{Exp}_{\bx^s_t}(\bh)$ by $\by$, the parallel transport operator $\Gamma_{\bx^s_t}^{\by}$ by $\Gamma$, and $\Gamma_{\by}^{\bx^s_t}$ by $\Gamma^{-1}$. We have
\small
\begin{align*}
    \norm{\text{grad} F(\by)}&=\norm{\Gamma^{-1} \text{grad} F(\by)}\\
    &=\norm{\Gamma^{-1}\text{grad} F(\by)-\text{grad}F(\bx^s_t)-\text{Hess} F(\bx^s_t)\bh+\bv^s_t+\bU^s_t \bh+\frac{\sigma\norm{\bh}}{2}\bh\\ &+(\text{grad} F(\bx^s_t)-\bv^s_t)+(\text{Hess} F(\bx^s_t)-\bU^s_t)\bh-\frac{\sigma\norm{\bh}}{2}\bh}\\
    &\leq \norm{\Gamma^{-1}\text{grad} F(\by)-\text{grad}F(\bx^s_t)-\text{Hess} F(\bx^s_t)\bh}+\norm{\bv^s_t+\bU^s_t \bh+\frac{\sigma\norm{\bh}}{2}\bh}\\
    &+\norm{\text{grad} F(\bx^s_t)-\bv^s_t}+\norm{(\text{Hess} F(\bx^s_t)-\bU^s_t)\bh}+\frac{\sigma \norm{\bh}^2}{2}.
\end{align*}
\normalsize
Due to the isometric property of $\Gamma$ and Lemma~\ref{lemma:hessian}, we have 
\small
\begin{align*}
    \norm{\Gamma^{-1}\text{grad} F(\by)-\text{grad}F(\bx^s_t)-\text{Hess} F(\bx^s_t)\bh}&=\norm{\text{grad} F(\by)-\Gamma\text{grad}F(\bx^s_t)-\Gamma\text{Hess} F(\bx^s_t)\bh}\\
    &\leq \frac{L_H}{2} \norm{\bh}^2 \leq \frac{\sigma}{4} \norm{\bh}^2,
\end{align*}
\normalsize
where the last inequality follows from the condition $\sigma \geq 2 L_H$. From the definition of $m^s_t(\cdot)$ in~\eqref{m_cubic}, we have  
$
    \norm{\bv^s_t+\bU^s_t \bh+\frac{\sigma\norm{\bh}}{2}\bh}=\norm{\nabla m^s_t(\bh)}.
$
Note that 
\begin{align*}
    \norm{(\text{Hess} F(\bx^s_t)-\bU^s_t)\bh}&\leq \norm{\text{Hess} F(\bx^s_t)-\bU^s_t}_{op}\norm{\bh}
    \leq \frac{1}{\sigma}\norm{\text{Hess} F(\bx^s_t)-\bU^s_t}_{op}^2+\frac{\sigma}{4}\norm{\bh}^2,
\end{align*}
where the last inequality is due to Young's inequality. Combining these results, the proof of \eqref{lemm44} is completed.
\end{proof}

\begin{lemma}\label{lemma:small2}
  Under Assumptions~\ref{assumption:smooth}-\ref{assumption:positiveinj}, if $\sigma\geq 2 L_H$ and also satisfies~\eqref{cond:sigma1}, then for any $\bh\in T_{\bx^s_t}\cM$ such that $\norm{\bh}<\text{inj}(\cM)$, we have  
  \begin{equation}
      -\lambda_{\min} (\text{Hess} F(\text{Exp}_{\bx^s_t}(\bh)) \leq \sigma \norm{\bh} +\norm{\text{Hess} F(\bx^s_t)-\bU^s_t}_{op} + \frac{\sigma}{2} |\norm{\bh}-\norm{\bh^s_t}|,
  \end{equation}
  where $\lambda_{\min} (\text{Hess} F(\bx))$ is defined as $\lambda_{\min} (\text{Hess} F(\bx))=\inf_{\eta\in T_{\bx}\cM} \{\frac{\fprod{\text{Hess} F(\bx)\eta,\eta}}{\norm{\eta}}\}$.
\end{lemma}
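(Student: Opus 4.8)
The plan is to bound the Rayleigh quotient $\fprod{\text{Hess} F(\by)[\eta],\eta}$ from below, uniformly over unit vectors $\eta\in T_{\by}\cM$ where $\by\triangleq\text{Exp}_{\bx^s_t}(\bh)$, by transporting the Hessian back to $T_{\bx^s_t}\cM$ and there invoking the second-order optimality condition \eqref{optimal:2}. Write $\Gamma\triangleq\Gamma_{\bx^s_t}^{\by}$ and $\Gamma^{-1}\triangleq\Gamma_{\by}^{\bx^s_t}$. Since $\norm{\bh}<\text{inj}(\cM)$ we have $d(\bx^s_t,\by)=\norm{\bh}$, so Lemma~\ref{lemma:hessian} (applied to $F$, which is $H$-smooth with constant $L_H$ by averaging the $f_i$) gives $\norm{\text{Hess} F(\by)-\Gamma\circ\text{Hess} F(\bx^s_t)\circ\Gamma^{-1}}_{op}\leq L_H\norm{\bh}$. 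Combined with Cauchy--Schwarz this yields, for any unit $\eta\in T_{\by}\cM$,
\[
\fprod{\text{Hess} F(\by)[\eta],\eta}\;\geq\;\fprod{(\Gamma\circ\text{Hess} F(\bx^s_t)\circ\Gamma^{-1})[\eta],\eta}-L_H\norm{\bh}.
\]

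Next I would set $\zeta\triangleq\Gamma^{-1}\eta$, which is a unit vector of $T_{\bx^s_t}\cM$ because parallel transport is isometric, and use that the adjoint of $\Gamma$ is $\Gamma^{-1}$ (since $\Gamma$ preserves inner products) to rewrite the transported term as $\fprod{\text{Hess} F(\bx^s_t)[\zeta],\zeta}$. Splitting $\text{Hess} F(\bx^s_t)=\bU^s_t+(\text{Hess} F(\bx^s_t)-\bU^s_t)$, bounding the error term below by $-\norm{\text{Hess} F(\bx^s_t)-\bU^s_t}_{op}$, and applying \eqref{optimal:2}, i.e. $\bU^s_t+\frac{\sigma}{2}\norm{\bh^s_t}\bI\succeq 0$, to get $\fprod{\bU^s_t[\zeta],\zeta}\geq-\frac{\sigma}{2}\norm{\bh^s_t}$, I obtain, after using $\sigma\geq 2L_H$ to replace $L_H\norm{\bh}$ by $\frac{\sigma}{2}\norm{\bh}$,
\[
\fprod{\text{Hess} F(\by)[\eta],\eta}\;\geq\;-\tfrac{\sigma}{2}\norm{\bh}-\tfrac{\sigma}{2}\norm{\bh^s_t}-\norm{\text{Hess} F(\bx^s_t)-\bU^s_t}_{op}.
\]
Taking the infimum over unit $\eta$ gives $-\lambda_{\min}(\text{Hess} F(\by))\leq \frac{\sigma}{2}\norm{\bh}+\frac{\sigma}{2}\norm{\bh^s_t}+\norm{\text{Hess} F(\bx^s_t)-\bU^s_t}_{op}$, and the scalar triangle inequality $\norm{\bh^s_t}\leq\norm{\bh}+|\norm{\bh}-\norm{\bh^s_t}|$ then converts $\frac{\sigma}{2}\norm{\bh}+\frac{\sigma}{2}\norm{\bh^s_t}$ into $\sigma\norm{\bh}+\frac{\sigma}{2}|\norm{\bh}-\norm{\bh^s_t}|$, which is exactly the claimed bound.

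I do not expect a serious obstacle. The one point that deserves care is that \eqref{optimal:2} is a genuinely \emph{global}-minimizer property of $\bh^s_t$ (as flagged in the sketch of Lemma~\ref{lemma:optimal}), so this argument pertains to the exact-subproblem version of the algorithm; one must also keep the parallel-transport bookkeeping consistent, namely that $\Gamma$ preserves norms and inner products, so that $\Gamma^{-1}$ is simultaneously its inverse and its adjoint, and that conjugating a symmetric operator by an isometry leaves the spectrum unchanged. A minor caveat: the denominator in the displayed definition of $\lambda_{\min}$ in the statement should read $\norm{\eta}^2$, as in Definition~\ref{def:eps_delta}; with that reading the reduction to unit vectors above is immediate.
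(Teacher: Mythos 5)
Your proof is correct and follows essentially the same route as the paper's: both use H-smoothness to relate $\text{Hess}F(\by)$ to the transported $\text{Hess}F(\bx^s_t)$, split off the $\bU^s_t$ approximation error, invoke \eqref{optimal:2} through the isometry of parallel transport, and finish with $\sigma\geq 2L_H$ plus the scalar triangle inequality on $|\norm{\bh}-\norm{\bh^s_t}|$. The only cosmetic difference is that the paper phrases the transported bound as an operator inequality established by contradiction, whereas you work directly with the Rayleigh quotient and the adjoint property of $\Gamma$; your closing caveats (global minimality needed for \eqref{optimal:2}, and the $\norm{\eta}^2$ normalization) are both apt.
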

\begin{proof}
Denote $\text{Exp}_{\bx^s_t}(\bh)$ by $\by$ and $\bx^s_t$ by $\bx$.  Furthermore, let $\mathbf{I}_{\bx}$ denotes the identity operator at $\bx$, i.e., $\mathbf{I}_{\bx}(\eta)=\eta$ for any $\eta\in T_{\bx} \cM$. We have 
\begin{align*}
    H_{\by} &\succeq \Gamma_{\bx}^{\by} H_{\bx} \Gamma_{\by}^{\bx}-L_H \norm{\bh} \mathbf{I}_{\by}\\
     & \succeq \Gamma_{\bx}^{\by} U^s_t \Gamma_{\by}^{\bx}-\norm{\Gamma_{\bx}^{\by} H_{\bx} \Gamma_{\by}^{\bx}-\Gamma_{\bx}^{\by} U^s_t \Gamma_{\by}^{\bx}}_{op} \mathbf{I}_{\by}-L_H \norm{\bh} \mathbf{I}_{\by}\\
    & \succeq -\frac{\sigma}{2}\norm{\bh^s_t} \mathbf{I}_{\by}-\norm{H_{\bx}-U^s_t}_{op} \mathbf{I}_{\by}-L_H\norm{\bh} \mathbf{I}_{\by},
\end{align*}
where the first inequality follows from the H-smooth assumption \eqref{lc0}, the second inequality follows from the definition of the operator norm and triangle inequality, and the third inequality follows from the isometric property of the parallel transport $\Gamma$ and the following argument.
Assume that $\Gamma_{\bx}^{\by} U^s_t \Gamma_{\by}^{\bx}\succeq -\frac{\sigma}{2}\norm{\bh^s_t} \mathbf{I}_{\by}$ does not hold, then there exists $\xi\in T_{\by}\cM$, s.t. $\fprod{\xi, \Gamma_{\bx}^{\by} U^s_t \Gamma_{\by}^{\bx}\xi}+\frac{\sigma}{2}\norm{\bh^s_t}\cdot \norm{\xi}^2<0$. Denote the $\Gamma_{\by}^{\bx}\xi$ by $\eta$, we have
$\fprod{\eta,\bU^s_t \eta}+\frac{\sigma}{2}\norm{\bh^s_t}\cdot \norm{\eta}^2=\fprod{\xi, \Gamma_{\bx}^{\by} U^s_t \Gamma_{\by}^{\bx}\xi}+\frac{\sigma}{2}\norm{\bh^s_t}\cdot \norm{\xi}^2<0$,
which contradicts \eqref{optimal:2}.
Therefore, we have 
\begin{align*}
    -\lambda_{\min} (H_{\by}) &\leq \frac{\sigma}{2} \norm{\bh^s_t}+\norm{H_{\bx}-U^s_t}_{op}+ L_H\norm{\bh}\\
    &=\frac{\sigma}{2} (\norm{\bh^s_t}-\norm{\bh})+\norm{H_{\bx}-U^s_t}_{op}+ (L_H+\sigma/2)\norm{\bh}\\
    &\leq \sigma\norm{\bh}+\norm{H_{\bx}-U^s_t}_{op}+ \frac{\sigma}{2} |\norm{\bh^s_t}-\norm{\bh}|,
\end{align*}
where the last inequality holds because $L_H \leq \sigma/2$. 
\end{proof}

Combining Lemmas \ref{lemma:small1} and \ref{lemma:small2} and the definition of $\mu(\bx)$ in \eqref{mux}, we have the following result.
\begin{lemma}\label{lemma:mu_after_two_small}
Under Assumptions~\ref{assumption:smooth}-\ref{assumption:positiveinj}, setting $\sigma=\bar{k} L_H$ such that $\bar{k}\geq 2$ and $\sigma$ satisfying~\eqref{cond:sigma1}, then for any $\bh\in T_{\bx^s_t}\cM$ such that $\norm{\bh}<\text{inj}(\cM)$, we have 
  \begin{align*}
      \mu(\text{Exp}_{\bx^s_t}(\bh))&\leq 9\bar{k}^{3/2}[\sigma^{3/2}\norm{\bh}^3+\norm{\text{grad} F(\bx^s_t)-\bv^s_t}^{3/2}+\sigma^{-3/2}\norm{\text{Hess} F(\bx^s_t)-\bU^s_t}^{3}\\
      &+\norm{\nabla m^s_t(\bh)}^{3/2}+\frac{\sigma^{3/2}}{8}|\norm{\bh}-\norm{\bh^s_t}|^3].
  \end{align*}
 \end{lemma}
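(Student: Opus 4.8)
The plan is to combine Lemma~\ref{lemma:small1} and Lemma~\ref{lemma:small2} with the elementary power-mean inequality $\bigl(\sum_{i=1}^k a_i\bigr)^p\le k^{p-1}\sum_{i=1}^k a_i^p$ (valid for $a_i\ge 0$, $p\ge 1$ by convexity of $t\mapsto t^p$) and the substitution $L_H=\sigma/\bar{k}$ coming from $\sigma=\bar{k}L_H$. Write $\by=\text{Exp}_{\bx^s_t}(\bh)$; since $\norm{\bh}<\text{inj}(\cM)$ and $\sigma=\bar{k}L_H\ge 2L_H$ also satisfies~\eqref{cond:sigma1}, the hypotheses of both lemmas are met, so it only remains to post-process their two conclusions.

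First I would handle the gradient branch of $\mu$. Lemma~\ref{lemma:small1} bounds $\norm{\text{grad}F(\by)}$ by a sum of four nonnegative terms; raising to the power $3/2$ and applying the power-mean inequality with $k=4$, $p=3/2$ (so $k^{p-1}=4^{1/2}=2$), and simplifying $(\sigma\norm{\bh}^2)^{3/2}=\sigma^{3/2}\norm{\bh}^3$ and $(\sigma^{-1}\norm{\text{Hess}F(\bx^s_t)-\bU^s_t}_{op}^2)^{3/2}=\sigma^{-3/2}\norm{\text{Hess}F(\bx^s_t)-\bU^s_t}_{op}^3$, gives
\[
\norm{\text{grad}F(\by)}^{3/2}\le 2\Bigl(\sigma^{3/2}\norm{\bh}^3+\norm{\text{grad}F(\bx^s_t)-\bv^s_t}^{3/2}+\sigma^{-3/2}\norm{\text{Hess}F(\bx^s_t)-\bU^s_t}_{op}^3+\norm{\nabla m^s_t(\bh)}^{3/2}\Bigr).
\]
Next the curvature branch. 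If $-\lambda_{\min}(\text{Hess}F(\by))<0$ that term of the maximum in~\eqref{mux} is negative and trivially below the claim; otherwise both sides of the inequality in Lemma~\ref{lemma:small2} are nonnegative, so I may cube it and apply the power-mean inequality with $k=3$, $p=3$ (so $k^{p-1}=9$), obtaining a bound on $-\lambda_{\min}^3(\text{Hess}F(\by))$ by $9$ times the sum of $\sigma^3\norm{\bh}^3$, $\norm{\text{Hess}F(\bx^s_t)-\bU^s_t}_{op}^3$, and $(\sigma/2)^3\bigl|\norm{\bh}-\norm{\bh^s_t}\bigr|^3$. Dividing by $L_H^{3/2}=\sigma^{3/2}\bar{k}^{-3/2}$ turns $\sigma^3/L_H^{3/2}$ into $\bar{k}^{3/2}\sigma^{3/2}$ and $1/L_H^{3/2}$ into $\bar{k}^{3/2}\sigma^{-3/2}$, yielding
\[
\frac{-\lambda_{\min}^3(\text{Hess}F(\by))}{L_H^{3/2}}\le 9\bar{k}^{3/2}\Bigl(\sigma^{3/2}\norm{\bh}^3+\sigma^{-3/2}\norm{\text{Hess}F(\bx^s_t)-\bU^s_t}_{op}^3+\tfrac{\sigma^{3/2}}{8}\bigl|\norm{\bh}-\norm{\bh^s_t}\bigr|^3\Bigr).
\]

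Finally I would take the maximum of the two bounds. Since $\bar{k}\ge 2$ we have $9\bar{k}^{3/2}\ge 9\cdot 2^{3/2}>2$, so the first display is also $\le 9\bar{k}^{3/2}(\cdots)$ with the same four terms; and because every term occurring in either display appears (with a nonnegative coefficient no smaller) in
\[
9\bar{k}^{3/2}\Bigl(\sigma^{3/2}\norm{\bh}^3+\norm{\text{grad}F(\bx^s_t)-\bv^s_t}^{3/2}+\sigma^{-3/2}\norm{\text{Hess}F(\bx^s_t)-\bU^s_t}_{op}^3+\norm{\nabla m^s_t(\bh)}^{3/2}+\tfrac{\sigma^{3/2}}{8}\bigl|\norm{\bh}-\norm{\bh^s_t}\bigr|^3\Bigr),
\]
both $\norm{\text{grad}F(\by)}^{3/2}$ and $-\lambda_{\min}^3(\text{Hess}F(\by))/L_H^{3/2}$ are dominated by this expression, hence so is their maximum $\mu(\by)$, which is precisely the assertion. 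I do not anticipate a real obstacle here: the proof is pure bookkeeping, and the only points needing a little care are getting the constants $k^{p-1}$ right in the two power-mean steps (this is the origin of the $2$ and the $9$, the latter being absorbed into the $\bar{k}^{3/2}$ prefactor) and carrying out the $L_H=\sigma/\bar{k}$ substitution; the case $-\lambda_{\min}<0$ is disposed of in one line.
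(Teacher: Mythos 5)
Your proposal is correct and follows essentially the same route as the paper: the paper proves this lemma by the argument of its inexact counterpart (Lemma~\ref{lemma2:mu_after_two_small}), which likewise combines Lemmas~\ref{lemma:small1} and \ref{lemma:small2} with the power-mean inequalities $(a+b+c+d)^{3/2}\le 2(a^{3/2}+b^{3/2}+c^{3/2}+d^{3/2})$ and $(a+b+c)^3\le 9(a^3+b^3+c^3)$, substitutes $L_H=\sigma/\bar{k}$, and takes the maximum of the two branches. Your constants ($2$ from $4^{1/2}$, $9$ from $3^2$, and $9\bar{k}^{3/2}>2$ to unify the branches) all check out, and your explicit handling of the case $\lambda_{\min}\ge 0$ is a small point the paper leaves implicit.
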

\begin{proof}
The proof follows from that of Lemma~\ref{lemma2:mu_after_two_small}.
\end{proof}

Next, We present the following result from \cite{zhang2016riemannian}. This inequality extends the law of cosines from Euclidean space to Riemannian space, which is fundamental to carry out non-asymptotic analysis for Riemannian optimization. The resulting inequality is used in the proof of Lemma~\ref{lemma:tri}.
\begin{lemma}[\cite{zhang2016riemannian}, Lemma 5]\label{lemma:tria}
  If $a$, $b$ and $c$ are the side lengths of a geodesic triangle in an Alexandrov space with curvature lower-bounded by $\upkappa$, and $A$ is the angle between sides $b$ and $c$, then 
  \begin{equation}
      a^2 \leq \frac{\sqrt{|\upkappa|}c}{\tanh{\sqrt{|\upkappa|}c}}b^2+c^2-2bc\cos{A}.
  \end{equation}
\end{lemma}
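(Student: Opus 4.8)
The plan is to argue by comparison geometry, reducing the statement to a one–variable inequality in a model space of constant curvature. First I would invoke Toponogov's hinge comparison theorem, which for an Alexandrov space with curvature bounded below by $\upkappa$ is essentially the defining property: the geodesic hinge formed by the two sides of lengths $b$ and $c$ meeting at angle $A$ has closing side of length $a$ no larger than the closing side $\bar a_\upkappa$ of the comparison hinge with the same legs and angle in the simply connected model surface $M_\upkappa^2$ of constant curvature $\upkappa$. In the way the lemma is used later the sides are minimizing geodesics, being shorter than $\text{inj}(\cM)$ (see Remark~\ref{remark:control_stepsize}), so the comparison applies. It therefore suffices to prove
\begin{equation*}
\bar a_\upkappa^{\,2}\ \le\ \frac{\sqrt{|\upkappa|}\,c}{\tanh(\sqrt{|\upkappa|}\,c)}\,b^2+c^2-2bc\cos A .
\end{equation*}

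For $\upkappa\ge 0$ this is immediate: with $b,c,A$ fixed, the model-space closing side $\bar a_\upkappa$ is nonincreasing in the curvature parameter, so $\bar a_\upkappa\le\bar a_0$ where $\bar a_0^{\,2}=b^2+c^2-2bc\cos A$ is the Euclidean law of cosines; since $x/\tanh x\ge 1$ for all $x>0$, the asserted bound is weaker than this identity and holds. The substantive case is $\upkappa<0$. Put $t=\sqrt{|\upkappa|}$, $\beta=tb$, $\gamma=tc$, $\alpha=t\bar a_\upkappa$, and use the hyperbolic law of cosines $\cosh\alpha=\cosh\beta\cosh\gamma-\sinh\beta\sinh\gamma\cos A$. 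After dividing by $t^2$ the target reduces to the scale-free inequality
\begin{equation*}
\alpha^2\ \le\ \frac{\gamma}{\tanh\gamma}\,\beta^2+\gamma^2-2\beta\gamma\cos A .
\end{equation*}

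To establish this I would fix $\beta,\gamma>0$ and view everything as a function of $u=\cos A\in[-1,1]$, writing $\alpha(u)=\cosh^{-1}\!\big(\cosh\beta\cosh\gamma-\sinh\beta\sinh\gamma\,u\big)$ and $\psi(u)=\frac{\gamma}{\tanh\gamma}\beta^2+\gamma^2-2\beta\gamma u-\alpha(u)^2$. Differentiating the defining relation gives $\alpha'(u)=-\sinh\beta\sinh\gamma/\sinh\alpha(u)$ and, after a short computation, $\psi''(u)=2\,\alpha'(u)^2\big(\alpha\coth\alpha-1\big)\ge 0$, so $\psi$ is convex and its minimum on $[-1,1]$ is attained at an endpoint or at the unique interior critical point. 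At $u=\pm1$ one has $\alpha=|\beta\mp\gamma|$, hence $\psi(\pm1)=\beta^2\big(\tfrac{\gamma}{\tanh\gamma}-1\big)\ge 0$. At an interior critical point $u^\ast$ the equation $\psi'(u^\ast)=0$ reads $\sinh\alpha^\ast/\alpha^\ast=(\sinh\beta/\beta)(\sinh\gamma/\gamma)$; substituting this back and invoking the monotonicity and convexity of $x\mapsto\sinh x/x$ together with $x/\tanh x\ge 1$ yields $\psi(u^\ast)\ge 0$. Combining the two curvature regimes gives the claim.

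The hard part is this last step — pinning down $\psi(u^\ast)\ge 0$ at the interior critical point. Everything before it is essentially bookkeeping: the hinge comparison is standard (and, for Alexandrov spaces, tautological), and the $\upkappa\ge 0$ case is trivial. But the interior-point estimate needs exactly the right small collection of elementary inequalities for $\sinh x/x$ and $x/\tanh x$, and it is there that the specific constant $\sqrt{|\upkappa|}\,c/\tanh(\sqrt{|\upkappa|}\,c)$ — and the asymmetry between the roles of $b$ and $c$ — is forced rather than any larger quantity. A minor additional point is that the hinge comparison requires the sides of the triangle to be length-minimizing, which is why the inequality is applied only to geodesics shorter than the injectivity radius.
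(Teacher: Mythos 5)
The paper does not actually prove this lemma; it imports it verbatim as Lemma~5 of \cite{zhang2016riemannian}, so there is no in-paper argument to compare against and your attempt must stand on its own. Most of your skeleton checks out: the Toponogov hinge reduction to the model space, the disposal of the case $\upkappa\ge 0$ via the Euclidean law of cosines and $x/\tanh x\ge 1$, the computation $\alpha'(u)=-\sinh\beta\sinh\gamma/\sinh\alpha$, the convexity $\psi''(u)=2\alpha'(u)^2(\alpha\coth\alpha-1)\ge 0$, and the endpoint values $\psi(\pm 1)=\beta^2\bigl(\tfrac{\gamma}{\tanh\gamma}-1\bigr)\ge 0$ are all correct.

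The genuine gap is the interior critical point, which you yourself flag as ``the hard part'' and then do not carry out. Substituting the critical-point relation $\sinh\alpha^\ast/\alpha^\ast=(\sinh\beta/\beta)(\sinh\gamma/\gamma)$ and $u^\ast=(\cosh\beta\cosh\gamma-\cosh\alpha^\ast)/(\sinh\beta\sinh\gamma)$ reduces $\psi(u^\ast)\ge 0$ to
\begin{equation*}
\alpha^{\ast 2}+\frac{2\alpha^\ast}{\sinh\alpha^\ast}\bigl(\cosh\beta\cosh\gamma-\cosh\alpha^\ast\bigr)\ \le\ \frac{\gamma}{\tanh\gamma}\,\beta^2+\gamma^2,
\end{equation*}
a two-parameter inequality (with $\alpha^\ast$ implicitly determined by $\beta,\gamma$) that is \emph{tight}: taking $\beta=\gamma=\varepsilon\to 0$ gives $\alpha^\ast\approx\sqrt{2}\,\varepsilon$, $u^\ast\approx 0$, and both sides equal to $2\varepsilon^2$ up to higher order. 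Because equality is approached there, the claim that it ``follows from monotonicity and convexity of $\sinh x/x$ together with $x/\tanh x\ge 1$'' cannot be accepted as stated — crude one-sided bounds of that type will not close an inequality whose two sides agree to leading order, and you have not exhibited the chain of estimates that does. Until that step is written out (or replaced — e.g., by the more standard direct manipulation of the hyperbolic law of cosines used in \cite{zhang2016riemannian}, which differentiates $\tfrac12 a^2$ along the side of length $b$ and never needs a critical-point analysis), the proof is incomplete precisely at the point where the specific constant $\sqrt{|\upkappa|}c/\tanh(\sqrt{|\upkappa|}c)$ is supposed to emerge.
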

Lemmas~\ref{lemma:tri} and \ref{lemma:constant_c_t} below are used in the proof of the first main result presented in Theorem~\ref{main1}.
\begin{lemma}\label{lemma:tri}
Let $\zeta\triangleq\sqrt{|\upkappa|}\text{inj}(\cM)/{\tanh{\sqrt{|\upkappa|}\text{inj}(\cM)}}$ if $\upkappa< 0$ and $\zeta\triangleq 1$, o.w..
Then, under Assumptions~\ref{assumption:smooth}-\ref{assumption:curvature}, for any $\bh\in T_{\bx^s_t}\cM$ such that $\norm{\bh}<\text{inj}(\cM)$ and $T\geq 2$, we have 
  \begin{equation}
      \norm{\text{Exp}^{-1}_{\hat{\bx}^s}(\text{Exp}_{\bx^s_t}(\bh))}^3 \leq 2(\sqrt{\zeta-1}+1)^3 T^2 \norm{\bh}^3+(1+\frac{3}{T})\norm{\text{Exp}^{-1}_{\hat{\bx}^s}(\bx^s_t)}^3.
  \end{equation}
\end{lemma}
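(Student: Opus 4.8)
The plan is to regard $\hat{\bx}^s$, $\bx^s_t$, and $\by\triangleq\text{Exp}_{\bx^s_t}(\bh)$ as the vertices of a geodesic triangle and apply the Riemannian law of cosines, Lemma~\ref{lemma:tria}. Write $D\triangleq d(\hat{\bx}^s,\bx^s_t)=\norm{\text{Exp}^{-1}_{\hat{\bx}^s}(\bx^s_t)}$, $h\triangleq\norm{\bh}=d(\bx^s_t,\by)$, and $a\triangleq d(\hat{\bx}^s,\by)=\norm{\text{Exp}^{-1}_{\hat{\bx}^s}(\by)}$. By hypothesis $h<\text{inj}(\cM)$, and by Remark~\ref{remark:control_stepsize} (specifically \eqref{eq:UB_dist_iterate_epoch}) $D<\text{inj}(\cM)$, so all three sides are realized by unique minimizing geodesics and the inverse exponentials above are well defined. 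First I would invoke Lemma~\ref{lemma:tria} with the side identification $b\leftrightarrow h$, $c\leftrightarrow D$, and the remaining side $\leftrightarrow a$, so that $A$ is the angle at $\bx^s_t$; this gives
\[
  a^2\le\frac{\sqrt{|\upkappa|}\,D}{\tanh(\sqrt{|\upkappa|}\,D)}\,h^2+D^2-2Dh\cos A.
\]
The whole point of this particular identification is that the curvature inflation factor lands on $h^2=\norm{\bh}^2$ rather than on $D^2$; since $x\mapsto x/\tanh x$ is nondecreasing on $(0,\infty)$ and $D<\text{inj}(\cM)$ (and the factor equals $1$ when $\upkappa\ge0$), it is bounded by $\zeta$, and together with $-\cos A\le1$ this yields $a^2\le\zeta h^2+D^2+2Dh$.

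Next, since $\zeta\ge1$, set $\alpha\triangleq1+\sqrt{\zeta-1}\ge1$. Then $\alpha^2=\zeta+2\sqrt{\zeta-1}\ge\zeta$ and $\alpha\ge1$ give $\zeta h^2+D^2+2Dh\le D^2+2\alpha Dh+\alpha^2h^2=(D+\alpha h)^2$, hence
\[
  a\le D+\alpha h=D+(1+\sqrt{\zeta-1})\,\norm{\bh}.
\]

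To finish I would cube this estimate using the elementary inequality $(x+y)^3\le(1+\varepsilon)^2x^3+(1+\tfrac1\varepsilon)^2y^3$, valid for all $x,y\ge0$ and $\varepsilon>0$ (it follows from Jensen's inequality applied to $t\mapsto t^3$ with weights $\tfrac1{1+\varepsilon}$ and $\tfrac\varepsilon{1+\varepsilon}$). Taking $x=D$, $y=\alpha\norm{\bh}$, and $\varepsilon\triangleq\sqrt{1+3/T}-1$ makes $(1+\varepsilon)^2=1+3/T$, which produces exactly the coefficient $1+3/T$ in front of $\norm{\text{Exp}^{-1}_{\hat{\bx}^s}(\bx^s_t)}^3$. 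It then remains to check that the coefficient of $\norm{\bh}^3$, namely $(1+\tfrac1\varepsilon)^2\alpha^3$, is at most $2\alpha^3T^2$, i.e.\ $(1+\tfrac1\varepsilon)^2\le2T^2$. Since $\tfrac1\varepsilon=\tfrac{\sqrt{1+3/T}+1}{3/T}$ one has $1+\tfrac1\varepsilon=\tfrac13\big(3+T+\sqrt{T^2+3T}\big)$, so the desired bound is equivalent to $3+T+\sqrt{T^2+3T}\le3\sqrt2\,T$; both sides are positive for $T\ge2$, and squaring reduces this to a polynomial inequality in $T$ that holds for all $T\ge2$ (in fact for all $T\gtrsim1.85$). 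Combining the three displays gives the claimed inequality.

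The routine ingredients here (law of cosines, completing a square, Young/Jensen for the cube) are standard; the one place that needs care is that the coefficient of $\norm{\text{Exp}^{-1}_{\hat{\bx}^s}(\bx^s_t)}^3$ must be held down to $1+3/T$, barely above $1$. This forces both the specific side assignment in Lemma~\ref{lemma:tria} — so the curvature distortion is absorbed into $\norm{\bh}$, which is then free to pick up the large $T^2$ factor — and a non-lossy choice of the splitting parameter $\varepsilon$ in the cubing step; the constant $2$ multiplying $(\sqrt{\zeta-1}+1)^3T^2$ is essentially the smallest for which the resulting one-variable inequality is valid uniformly over $T\ge2$. Thus the bookkeeping of constants, rather than any single step, is the crux.
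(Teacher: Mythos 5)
Your proposal is correct and follows essentially the same route as the paper: the same application of Lemma~\ref{lemma:tria} with the curvature factor placed on $\norm{\bh}^2$, the same completion of the square yielding $a\le(\sqrt{\zeta-1}+1)\norm{\bh}+\norm{\text{Exp}^{-1}_{\hat{\bx}^s}(\bx^s_t)}$, and a final elementary cubing step. The only (cosmetic) difference is in that last step: the paper expands the cube and applies Young's inequality to the two cross terms with weights $T^{1/3}$ and $T^{2/3}$, obtaining the coefficient $1+2\sqrt{T}+T^2\le2T^2$, whereas you use the two-parameter Jensen bound $(x+y)^3\le(1+\varepsilon)^2x^3+(1+\varepsilon^{-1})^2y^3$ with $\varepsilon$ tuned to hit $1+3/T$ exactly and then verify $(1+\varepsilon^{-1})^2\le2T^2$ for $T\ge2$; both verifications are valid.
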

\begin{proof}
Let $g(t)=t/\tanh{t}$ which is non-decreasing on $[0,\sqrt{|\upkappa|}D]$ and $g(t)\geq 1$. For simplicity, denote $\norm{\text{Exp}^{-1}_{\hat{\bx}^s}(\text{Exp}_{\bx^s_t}(\bh))}$, $\norm{\bh}$ and $\norm{\text{Exp}^{-1}_{\hat{\bx}^s}(\bx^s_t)}$ by $a$, $b$ and $c$, respectively. 
By Lemma~\ref{lemma:tria}, we have 
\begin{align*}
    a^2 &\leq \frac{\sqrt{|\upkappa|}c}{\tanh{\sqrt{|\upkappa|}c}}b^2+c^2-2bc\cos{A}
    \leq (b+c)^2+(\zeta-1)b^2
    \leq [(\sqrt{\zeta-1}+1)b+c]^2.
\end{align*}
Therefore, 
\footnotesize
\begin{align*}
    a^3 &\leq [(\sqrt{\zeta-1}+1)b+c]^3\\
    &= (\sqrt{\zeta-1}+1)^3 b^3 +3T^{1/3}(\sqrt{\zeta-1}+1)^2 b^2 \frac{c}{T^{1/3}}+3 T^{2/3}(\sqrt{\zeta-1}+1)b \frac{c^2}{T^{2/3}}+c^3\\
    &\leq (\sqrt{\zeta-1}+1)^3 b^3+3(\frac{2}{3}[T^{1/3}(\sqrt{\zeta-1}+1)^2b^2]^{3/2}+\frac{1}{3}\frac{c^3}{T})
    +3(\frac{1}{3}[T^{2/3}(\sqrt{\zeta-1}+1)b]^3+\frac{2c^{3}}{3T})+c^3\\
    &=(\sqrt{\zeta-1}+1)^3(1+2\sqrt{T}+T^2)b^3+(1+\frac{3}{T})c^3\\
    &\leq 2(\sqrt{\zeta-1}+1)^3 T^2 b^3 +(1+\frac{3}{T})c^3,
\end{align*}
where the second inequality follows from Young's inequality  and the last inequality follows from the fact that $1+2\sqrt{T}+T^2\leq 2T^2$ when $T\geq 2$.
\end{proof}

\begin{lemma}\label{lemma:constant_c_t}
 Define the series $c_t\triangleq c_{t+1}(1+3/T)+\sigma[500T^3(\sqrt{\xi-1}+1)^3]^{-1}$ for $0\leq t\leq T-1$ and $c_T=0$. Then for any $1\leq t\leq T$, we have
  \begin{equation}
      \sigma/24-2c_t (\sqrt{\xi-1}+1)^3T^2 \geq 0. 
  \end{equation}
\end{lemma}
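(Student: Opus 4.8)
The plan is to solve the first-order linear recursion defining $c_t$ in closed form, bound the resulting expression uniformly over $1\le t\le T$, and then close the claimed inequality with an elementary numerical estimate. Write $A\triangleq\sigma[500T^3(\sqrt{\xi-1}+1)^3]^{-1}$, so that the recursion reads $c_t=(1+3/T)c_{t+1}+A$ with terminal value $c_T=0$. Unrolling it from index $t$ up to $T$ turns $c_t$ into a finite geometric series with ratio $r\triangleq 1+3/T$:
\[
c_t=A\sum_{j=0}^{T-1-t}r^j=A\,\frac{r^{\,T-t}-1}{r-1}=\frac{AT}{3}\Big[(1+3/T)^{\,T-t}-1\Big].
\]

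Next I would control the exponential factor. For $1\le t\le T$ the exponent $T-t$ lies in $\{0,\dots,T-1\}$, hence $T-t<T$, and the elementary bound $(1+x/n)^n\le e^{x}$ (with $x=3$, $n=T$) gives $(1+3/T)^{\,T-t}\le(1+3/T)^{T}\le e^{3}$. Therefore $c_t\le\frac{AT}{3}(e^{3}-1)$ for every $1\le t\le T$. Substituting the value of $A$ back in,
\[
2c_t(\sqrt{\xi-1}+1)^3T^2\le\frac{2(e^{3}-1)}{3}\,AT^3(\sqrt{\xi-1}+1)^3=\frac{2(e^{3}-1)}{3}\cdot\frac{\sigma}{500}=\frac{(e^{3}-1)\sigma}{750},
\]
so that $\sigma/24-2c_t(\sqrt{\xi-1}+1)^3T^2\ge\sigma\big(\tfrac1{24}-\tfrac{e^{3}-1}{750}\big)\ge0$, since $24(e^{3}-1)<24\cdot 20<750$ (numerically $e^{3}\approx20.09$). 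This proves the lemma.

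There is no genuine obstacle here: the argument is just ``unroll, sum the geometric series, bound by $e^{3}$.'' The only points to watch are keeping the exponent range $T-t\in\{0,\dots,T-1\}$ straight so that $(1+3/T)^{\,T-t}\le e^{3}$ is legitimately applied, and confirming that the constant $500$ hard-wired into the definition of $c_t$ is chosen with enough slack — it is, by roughly a factor of $1.5$, which is precisely what makes the final inequality hold for all admissible $T$ simultaneously.
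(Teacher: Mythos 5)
Your proof is correct and follows essentially the same route as the paper's: both solve the linear recursion in closed form to get $c_t=\frac{AT}{3}\bigl[(1+3/T)^{T-t}-1\bigr]$ (the paper writes this as $c_t=(p^{T-t}-1)q$ with $q=\sigma[1500T^2(\sqrt{\xi-1}+1)^3]^{-1}$, which is the same quantity) and then bound $(1+3/T)^{T-t}\le(1+3/T)^T$ by a constant — you use $e^3$, the paper uses the slightly looser $27$ — before checking the final numerical inequality against $\sigma/24$.
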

\begin{proof}
~Assuming $c_t +q =p(c_{t+1}+q)$, we can derive $p=1+3/T$ and $q=\sigma[1500T^2(\sqrt{\xi-1}+1)^3]^{-1}$. Furthermore, given $c_T=0$ by induction, we have $c_t=(p^{T-t}-1)q$. Therefore,
	\begin{equation}
		2c_t (\sqrt{\xi-1}+1)^3T^2=((1+\frac{3}{T})^{T-t}-1)\frac{\sigma}{750}\leq (1+\frac{3}{T})^T \frac{\sigma}{750}\leq\frac{\sigma}{24},
	\end{equation}
	where the last inequality follows from the fact $(1+3/T)^T \leq 27$.
\end{proof}

Theorem~\ref{main1} below presents our first main result. It provides the convergence rate of the R-SVRC algorithm when the cubic regularized Newton subproblem is solved \emph{exactly}.
\begin{theorem}\label{main1}
Under Assumptions~\ref{assumption:smooth}-\ref{assumption:curvature}, suppose that the cubic regularization parameter $\sigma$ in Algorithm~\ref{alg:svr_cubic} is fixed and satisfies $\sigma=\bar{k} L_H$, where $L_H$ is the Hessian Lipschitz parameter according to \eqref{lc0}, $\bar{k}\geq 2$ and $\sigma$ satisfies~\eqref{cond:sigma1}. Furthermore, assume that the batch size parameters $b_g$ and $b_h$ satisfy
\begin{equation}\label{condition:batch_sizes}
    b_g \geq \frac{3000^{4/3}T^4 (\sqrt{\xi-1}+1)^4}{\bar{k}^2}, \quad b_h \geq \frac{e\log d}{(\sqrt{\frac{\bar{k}}{193T(\sqrt{\zeta-1}+1)}+\frac{1}{8}}-\frac{1}{2\sqrt{2}})^2}, 
\end{equation}
where $T\geq 2$ is the length of the inner loop, $e$ is the Euler's number and $d=mn$ is the dimension of the problem. Then, we have 
\begin{equation}
    \mE [\mu(\bx_{out})]\leq \frac{240 \bar{k}^2 L_H^{1/2}\Delta_F}{S T},
\end{equation}
where $\mu(\bx)$ is defined in \eqref{mux}.
\end{theorem}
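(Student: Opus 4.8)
The plan is to set up a Lyapunov (potential) function that combines the decrease in the objective value $F$ with a weighted measure of how far the current iterate has drifted from the epoch anchor point $\hat{\bx}^s$, namely a term proportional to $c_t\|\text{Exp}^{-1}_{\hat{\bx}^s}(\bx^s_t)\|^3$ with the sequence $c_t$ defined in Lemma~\ref{lemma:constant_c_t}. The first step is a per-iteration descent estimate: starting from the $H$-smoothness bound \eqref{lc2}, $F(\bx^s_{t+1})\le F(\bx^s_t)+\fprod{\bh^s_t,\text{grad}F(\bx^s_t)}+\tfrac12\fprod{\bh^s_t,\text{Hess}F(\bx^s_t)[\bh^s_t]}+\tfrac{L_H}{6}\|\bh^s_t\|^3$, then replacing $\text{grad}F$ and $\text{Hess}F$ by the variance-reduced surrogates $\bv^s_t$ and $\bU^s_t$ (paying the errors $\fprod{\text{grad}F(\bx^s_t)-\bv^s_t,\bh^s_t}$ and $\fprod{(\text{Hess}F(\bx^s_t)-\bU^s_t)[\bh^s_t],\bh^s_t}$, which are controlled by Lemma~\ref{lemma:innerproduct} with a parameter $M$ to be tuned), and finally invoking the cubic-subproblem identity \eqref{optimal:3} to get a genuine $-\tfrac{\sigma}{12}\|\bh^s_t\|^3$ decrease. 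The net result should read, schematically, $F(\bx^s_{t+1})\le F(\bx^s_t)-(\text{const}\cdot\sigma)\|\bh^s_t\|^3+(\text{error terms in }\|\text{grad}F-\bv^s_t\|^{3/2}\text{ and }\|\text{Hess}F-\bU^s_t\|_{op}^3)$.

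Next I would take expectation over the sampling sets $I_g,I_h$ and use Lemma~\ref{expectation1} to bound the error terms by a constant times $\|\text{Exp}^{-1}_{\hat{\bx}^s}(\bx^s_t)\|^3$, where the constants involve $b_g^{-3/4}$ and $(\rho+\rho^2)^3$ with $\rho=\sqrt{2e\log d/b_h}$. The batch-size conditions \eqref{condition:batch_sizes} are exactly what make these constants small enough. Then I would bring in Lemma~\ref{lemma:tri} to relate $\|\text{Exp}^{-1}_{\hat{\bx}^s}(\bx^s_{t+1})\|^3 = \|\text{Exp}^{-1}_{\hat{\bx}^s}(\text{Exp}_{\bx^s_t}(\bh^s_t))\|^3$ back to $T^2\|\bh^s_t\|^3$ plus $(1+3/T)\|\text{Exp}^{-1}_{\hat{\bx}^s}(\bx^s_t)\|^3$; this is the step that forces the telescoping structure of the $c_t$ recursion. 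Adding $c_{t+1}\|\text{Exp}^{-1}_{\hat{\bx}^s}(\bx^s_{t+1})\|^3$ to both sides of the descent inequality, the $(1+3/T)$ factor reproduces $c_t$ from $c_{t+1}$, and the coefficient in front of $\|\bh^s_t\|^3$ becomes $\sigma/12$ (from descent) minus the $\sigma/24$-type contributions from the drift term and the error terms; Lemma~\ref{lemma:constant_c_t} guarantees this stays nonnegative, so we retain a clean $-(\sigma/24)\|\bh^s_t\|^3$ (or similar constant) times something that, via Lemma~\ref{lemma:mu_after_two_small}, dominates $\mu(\bx^s_{t+1})$ up to the additive term $\|\nabla m^s_t(\bh^s_t)\|^{3/2}$ and $|\|\bh^s_t\|-\|\bh^s_t\|\|^3=0$ — here crucially, since the subproblem is solved \emph{exactly}, $\nabla m^s_t(\bh^s_t)=0$ by \eqref{optimal:1} and the last term in Lemma~\ref{lemma:mu_after_two_small} vanishes when we evaluate $\mu$ at $\text{Exp}_{\bx^s_t}(\bh^s_t)=\bx^s_{t+1}$.

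Finally I would telescope the Lyapunov inequality over $t=0,\dots,T-1$ within an epoch (the drift term vanishes at both ends because $c_T=0$ and $\bx^s_0=\hat{\bx}^s$ gives $\|\text{Exp}^{-1}_{\hat{\bx}^s}(\bx^s_0)\|=0$), then sum over epochs $s=1,\dots,S$ using $\hat{\bx}^{s+1}=\bx^{s+1}_T$ so the objective values telescope across epochs too, bounding the total by $F(\bx_0)-F^*=\Delta_F$. This yields $\sum_{s,t}\mE[\mu(\bx^s_t)]\le (\text{const}\cdot\bar{k}^2 L_H^{1/2})\Delta_F$, and dividing by the number of terms $ST$ and using that $\bx_{out}$ is drawn uniformly gives the claimed $\mE[\mu(\bx_{out})]\le 240\bar{k}^2L_H^{1/2}\Delta_F/(ST)$; tracking the constants (the $9\bar k^{3/2}$ from Lemma~\ref{lemma:mu_after_two_small}, the factors of $27$ from Lemma~\ref{lemma:innerproduct}, and the $750$/$24$ from Lemma~\ref{lemma:constant_c_t}) is what produces the explicit $240$. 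The main obstacle I anticipate is the bookkeeping in choosing the free parameter $M$ in Lemma~\ref{lemma:innerproduct} (one choice for the gradient error, another for the Hessian error) so that after taking expectations and applying Lemma~\ref{lemma:tri}, every error coefficient is simultaneously absorbable into the $\sigma/24$ slack guaranteed by Lemma~\ref{lemma:constant_c_t} — this is precisely where the two inequalities in \eqref{condition:batch_sizes} come from, and getting the algebra to close with the stated constants is delicate but routine.
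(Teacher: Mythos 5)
Your proposal is correct and follows essentially the same route as the paper's proof: the same per-iteration descent bound via \eqref{lc2}, Lemma~\ref{lemma:innerproduct}, and \eqref{optimal:3}; the same Lyapunov function $R^s_t=\mE[F(\bx^s_t)+c_t\norm{\text{Exp}^{-1}_{\hat{\bx}^s}(\bx^s_t)}^3]$ controlled through Lemmas~\ref{lemma:tri} and~\ref{lemma:constant_c_t}; the same use of Lemma~\ref{lemma:mu_after_two_small} with the exactness conditions killing the $\norm{\nabla m^s_t(\bh^s_t)}$ and $|\norm{\bh}-\norm{\bh^s_t}|$ terms; and the same two-level telescoping. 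You also correctly identified the one delicate point (tuning $M$ and the batch-size conditions so the error terms are absorbed into the $\sigma/24$ slack), which is exactly where the paper's constants come from.
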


\begin{proof}
First, we upper bound $F(\bx_{t+1}^s)$ as follows:
\begin{align*}
    F(\bx_{t+1}^s)&\leq F(\bx_t^s)+\fprod{\text{grad} F(\bx_t^s),\bh^s_t}+\frac{1}{2}\fprod{H_{\bx_t^s}[\bh^s_t],\bh^s_t}+\frac{L_H}{6}\norm{\bh^s_t}^3\\
    &=F(\bx_t^s)+\fprod{\text{grad} F(\bx_t^s)-\bv_t^s,\bh^s_t}+\frac{1}{2}\fprod{(H_{\bx_t^s}-\bU_t^s)[\bh^s_t],\bh^s_t}-\frac{\sigma-L_H}{6}\norm{\bh^s_t}^3\\
    &+\fprod{\bv^s_t,\bh^s_t}+\frac{1}{2}\fprod{\bU^s_t[\bh^s_t],\bh^s_t}+\frac{\sigma}{6}\norm{\bh^s_t}^3\\
    &\leq F(\bx_t^s)+(\frac{\sigma}{27}\norm{\bh^s_t}^3+\frac{2}{\sigma^{1/2}}\norm{\text{grad} F(\bx_t^s)-\bv_t^s}^{3/2})+\frac{1}{2}(\frac{2\sigma}{27}\norm{\bh^s_t}^3+\frac{27}{\sigma^2}\norm{H_{\bx_t^s}-\bU_t^s}_{op}^3)\\
    &-\frac{\sigma-L_H}{6}\norm{\bh^s_t}^3-\frac{\sigma}{12}\norm{\bh^s_t}^3\\
    &\leq F(\bx^s_t)+\frac{2}{\sigma^{1/2}}\norm{\text{grad} F(\bx_t^s)-\bv_t^s}^{3/2}+\frac{27}{2\sigma^2}\norm{H_{\bx^s_t}-\bU^s_t}_{op}^3-\frac{\sigma}{12}\norm{\bh^s_t}^3, \numberthis\label{eq:last_main_1}
\end{align*}
where the first inequality follows from Lemma~\ref{lemma:hessian} and the second inequality holds due to Lemmas~\ref{lemma:innerproduct} and~\ref{lemma:optimal}. Next, we define 
\begin{equation}\label{proof:11}
R^s_t=\mE[F(\bx^s_t)+c_t\norm{\text{Exp}^{-1}_{\hat{\bx}^s}(\bx^s_t)}^3],
\end{equation}
where $c_t$ is defined in Lemma~\ref{lemma:constant_c_t}. By Lemma~\ref{lemma:tri}, for $T\geq 2$, we have 
\begin{equation}\label{proof:22}
    c_{t+1}\norm{\text{Exp}^{-1}_{\hat{\bx}^s}(\text{Exp}_{\bx^s_t}(\bh^s_t))}^3 \leq 2c_{t+1}(\sqrt{\zeta-1}+1)^3 T^2\norm{\bh^s_t}^3+c_{t+1}(1+\frac{3}{T})\norm{\text{Exp}^{-1}_{\hat{\bx}^s}(\bx^s_t)}^3.
\end{equation}
From Lemma~\ref{lemma:mu_after_two_small} with $\bh=\bh^s_t$ using the condition \eqref{optimal:1} and the definition of $\bx^s_{t+1}$, we have
\begin{equation}\label{proof:33}
    \frac{\mu(\bx^s_{t+1})}{240 \bar{k}^2 \sqrt{L_H}}\leq \frac{\sigma}{24}\norm{\bh^s_t}^3+\frac{\norm{\text{grad} F(\bx^s_t)-\bv^s_t}^{3/2}}{24\sqrt{\sigma}}+\frac{\norm{\text{Hess} F(\bx^s_t)-\bU^s_t}^{3}}{24\sigma^2}.
\end{equation}
From \eqref{eq:last_main_1}, we have
\small
\begin{align*}
    &R^s_{t+1}+\mE [\frac{\mu(\bx^s_{t+1})}{240 \bar{k}^2 \sqrt{L_H}}]\\
    &=\mE[F(\bx^s_{t+1})+c_{t+1}\norm{\text{Exp}^{-1}_{\hat{\bx}^s}(\bx^s_{t+1})}^3+\frac{\mu(\bx^s_{t+1})}{240 \bar{k}^2 \sqrt{L_H}}]\\
    &\leq \mE[F(\bx^s_t)+\frac{3}{\sqrt{\sigma}}\norm{\text{grad} F(\bx^s_t)-\bv^s_t}^{3/2}+\frac{14}{\sigma^2}\norm{\text{Hess} F(\bx^s_t)-\bU^s_t}_{op}^{3}]\\
    &+\mE[c_{t+1}(1+\frac{3}{T})\norm{\text{Exp}^{-1}_{\hat{\bx}^s}(\bx^s_t)}^3-(\frac{\sigma}{24}-2c_{t+1}(\sqrt{\xi-1}+1)^3 T^2)\norm{\bh^s_t}^3]\\
    &\leq \mE[F(\bx^s_t)+\frac{3}{\sqrt{\sigma}}\norm{\text{grad} F(\bx^s_t)-\bv^s_t}^{3/2}+\frac{14}{\sigma^2}\norm{\text{Hess} F(\bx^s_t)-\bU^s_t}^{3}_{op}+c_{t+1}(1+\frac{3}{T})\norm{\text{Exp}^{-1}_{\hat{\bx}^s}(\bx^s_t)}^3],
\end{align*}
\normalsize
where the the first inequality follows from \eqref{eq:last_main_1}, \eqref{proof:22}, \eqref{proof:33} and the last inequality follows from Lemma~\ref{lemma:constant_c_t}.

Based on Lemma \ref{expectation1} and the conditions on $b_g$ and $b_h$, it can be verified that
\small
\begin{align*}
    \frac{3}{\sqrt{\sigma}}\mE\norm{\text{grad} F(\bx^s_t)-\bv^s_t}^{3/2}\leq \frac{3L_H^{3/2}}{\sqrt{\sigma}b_g^{3/4}}\mE \norm{\text{Exp}^{-1}_{\hat{\bx}^s}(\bx^s_t)}^3\leq \frac{\sigma}{1000T^3 (\sqrt{\zeta-1}+1)^3} \mE \norm{\text{Exp}^{-1}_{\hat{\bx}^s}(\bx^s_t)}^3,\\
    \frac{14}{\sigma^2}\mE\norm{\text{Hess} F(\bx^s_t)-\bU^s_t}^{3}_\leq \frac{896 L_H^3(\rho+\rho^2)^3}{\sigma^2} \mE\norm{\text{Exp}^{-1}_{\hat{\bx}^s}(\bx^s_t)}^3 \leq \frac{\sigma}{1000T^3 (\sqrt{\zeta-1}+1)^3} \mE \norm{\text{Exp}^{-1}_{\hat{\bx}^s}(\bx^s_t)}^3,
\end{align*}
where $\rho=\sqrt{\frac{2e\log mn}{b_h}}$. 
\normalsize
Therefore, we have
\begin{align*}
    R^s_{t+1}+\mE [\frac{\mu(\bx^s_{t+1})}{240 \bar{k}^2 \sqrt{L_H}}]&\leq \mE[F(\bx^s_t)+\norm{\text{Exp}^{-1}_{\hat{\bx}^s_t}(\bx^s_t)}^3(c_{t+1}(1+3/T)+\frac{\sigma}{500T^3(\sqrt{\zeta-1}+1)^3})]\\
    &=\mE[F(\bx^s_t)+c_t\norm{\text{Exp}^{-1}_{\hat{\bx}^s_t}(\bx^s_t)}^3]
    =R^s_t,
\end{align*}
where the first equality comes from the definition of $c_t$ in Lemma~\ref{lemma:constant_c_t}. Telescoping the above inequality from $t=0$ to $T-1$, we have 
\begin{equation*}
    R_0^s-R_T^s\geq (240\bar{k}^2 \sqrt{L_H})^{-1}\sum_{t=1}^T \mE[\mu(\bx^s_t)].
\end{equation*}
Note that $c_T=0$ and $\bx^{s-1}_T=\bx^s_0=\hat{\bx}^s$, then $R_T^s=\mE[F(\bx^s_T)+c_T\norm{\text{Exp}^{-1}_{\hat{\bx}^s_t}(\bx^s_T)}^3]=\mE F(\hat{\bx}^{s+1})$ and $R_0^s=\mE[F(\bx^s_0)+c_0\norm{\text{Exp}^{-1}_{\hat{\bx}^s}(\bx^s_0)}^3]=\mE F(\hat{\bx}^s)$, which implies 
\begin{equation*}
    \mE F(\hat{\bx}^s)-\mE F(\hat{\bx}^{s+1})=R_0^s-R_T^s\geq (240\bar{k}^2 \sqrt{L_H})^{-1}\sum_{t=1}^T \mE[\mu(\bx^s_t)].
\end{equation*}
Telescoping the above inequality from $s=1$ to $S$ yields 
\begin{equation*}
    \Delta_F \geq \sum^S_{s=1} \mE F(\hat{\bx}^s)-\mE F(\hat{\bx}^{s+1})\geq (240\bar{k}^2 \sqrt{L_H})^{-1}\sum_{s=1}^S \sum_{t=1}^T \mE[\mu(\bx^s_t)].
\end{equation*}
By the definition of the choice of $\bx_{out}$, the proof is completed. 
\end{proof}

\begin{remark}
Let $K=ST$ where $S$ and $T$ are the number of epochs and epoch length in Algorithm~\ref{alg:svr_cubic}. Following our discussion below \eqref{mux_expand} and by Theorem~\ref{main1}, setting $\mE[\mu(\bx)] \leq 240 \bar{k}^2 L_H^{1/2}\Delta_F/K\leq\epsilon^{3/2}$, the algorithm obtains a $(\epsilon,\sqrt{\epsilon})$-solution in $O(\epsilon^{-3/2})$ iterations. In other words, the algorithm obtains a first-order stationary point (i.e., $\norm{\text{grad} F(\bx)}\leq \epsilon$) in $O(\epsilon^{-3/2})$ iterations and a second-order stationary point (i.e, $\lambda_{\min} (\text{Hess} F(\bx))\geq -\epsilon$) in $O(\epsilon^{-3})$ iterations. 
\end{remark}


\begin{definition}[Second-order oracle]\label{def:SO}
Given an index $i$ and a point $\bx$, a second-order oracle (SO) call returns a triple 
$
	[f_i(\bx),\nabla f_i(\bx), \nabla^2f_i(\bx)].
$
\end{definition}
When manifold is embedded in a Euclidean space, calculating the Riemannian gradient and Hessian (applied to a certain direction) requires the Euclidean gradient and Hessian. Therefore, the number of $SO$ calls is a reasonable metric to evaluate complexities of different algorithms, stochastic and deterministic. In numerical studies, we also compare different methods on the number of SO calls.

\begin{corollary}\label{corollary1}
Suppose that the cubic regularization parameter $\sigma$ in Algorithm~\ref{alg:svr_cubic} is fixed and satisfies $\sigma=\bar{k} L_H$, where $L_H$ is the Hessian Lipschitz parameter according to \eqref{lc0}, $\bar{k}\geq 2$ and $\sigma$ satisfies~\eqref{cond:sigma1}. Let the epoch length $T=N^{1/5}$, batch sizes $b_g=\frac{3000^{4/3}N^{4/5} (\sqrt{\zeta-1}+1)^4}{\bar{k}^2}$, $b_h=\frac{e\log d}{(\sqrt{\frac{\bar{k}}{193N^{1/5}(\sqrt{\zeta-1}+1)}+\frac{1}{8}}-\frac{1}{2\sqrt{2}})^2}$, and the number of epochs $S=\max\{1,240 \bar{k}^2 L_H^{1/2}\Delta_F N^{-1/5}\epsilon^{-3/2}\}$, where $d=mn$ is the dimension of the problem. Then, under Assumptions~\ref{assumption:smooth}-\ref{assumption:curvature}, Algorithm~\ref{alg:svr_cubic} finds an $(\epsilon, \sqrt{L_H\epsilon})$-second-order stationary point in $\tilde{O}(N+L_H^{1/2}\Delta_F N^{4/5}\epsilon^{-3/2})$ second-order oracle calls.
\end{corollary}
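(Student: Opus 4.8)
The plan is to feed the parameter choices of the corollary into Theorem~\ref{main1}, extract the $(\epsilon,\sqrt{L_H\epsilon})$-stationarity guarantee, and then tally the second-order oracle (SO) calls. First I would check admissibility: with $T=N^{1/5}$, the prescribed $b_g$ and $b_h$ are precisely the values that saturate the hypotheses~\eqref{condition:batch_sizes}, and $\sigma=\bar{k}L_H$ with $\bar{k}\ge 2$ and~\eqref{cond:sigma1} are assumed, so Theorem~\ref{main1} applies verbatim and gives $\mE[\mu(\bx_{out})]\le 240\bar{k}^2 L_H^{1/2}\Delta_F/(ST)$. Since $T=N^{1/5}$ and $S\ge 240\bar{k}^2 L_H^{1/2}\Delta_F N^{-1/5}\epsilon^{-3/2}$, we have $ST\ge 240\bar{k}^2 L_H^{1/2}\Delta_F\epsilon^{-3/2}$, hence $\mE[\mu(\bx_{out})]\le\epsilon^{3/2}$. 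By the equivalence stated with~\eqref{mux_expand}, $\mu(\bx)\le\epsilon^{3/2}$ means $\norm{\text{grad}F(\bx)}\le\epsilon$ and $\lambda_{\min}(\text{Hess}F(\bx))\ge-\sqrt{L_H\epsilon}$, so $\bx_{out}$ is an $(\epsilon,\sqrt{L_H\epsilon})$-second-order stationary point in expectation (and, via Markov's inequality plus a constant/logarithmic number of restarts, with high probability at the cost of only a constant/$\log$ factor in the oracle count).

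Next I would count SO calls per epoch. In epoch $s$, forming $\bg^s$ and $\bH^s$ at $\hat{\bx}^s$ costs $N$ SO calls, since a single SO call at a point returns both the Euclidean gradient and Hessian of a component (Definition~\ref{def:SO}). Each of the $T$ inner iterations forms $\bv^s_t$ using $b_g$ SO calls at $\bx^s_t$ together with $b_g$ at $\hat{\bx}^s$ (whose component Hessians $\text{Hess}f_{i_t}(\hat{\bx}^s)$ come for free, and may be reused from the batch step), and forms $\bU^s_t$ using $b_h$ SO calls at $\bx^s_t$ and $b_h$ at $\hat{\bx}^s$; solving the cubic subproblem~\eqref{m_cubic} requires no oracle access. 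Hence one epoch costs $O\!\left(N+T(b_g+b_h)\right)$ SO calls and the whole run $O\!\left(S\,(N+T(b_g+b_h))\right)$.

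Finally I would substitute: $T b_g = N^{1/5}\cdot\Theta(N^{4/5})=\Theta(N)$, the constant absorbing $\bar{k}$ and $\zeta$; and rewriting the denominator in the definition of $b_h$ as $\sqrt{a+\tfrac18}-\sqrt{\tfrac18}$ with $a=\bar{k}\big(193\,N^{1/5}(\sqrt{\zeta-1}+1)\big)^{-1}\to 0$ shows $b_h=\tilde{O}(N^{2/5})$, so $T b_h=\tilde{O}(N^{3/5})=\tilde{O}(N)$. Thus one epoch costs $\tilde{O}(N)$ SO calls, and multiplying by the two branches of $S=\max\{1,\,240\bar{k}^2 L_H^{1/2}\Delta_F N^{-1/5}\epsilon^{-3/2}\}$ yields the total $\tilde{O}\!\left(N+L_H^{1/2}\Delta_F N^{4/5}\epsilon^{-3/2}\right)$, the tilde hiding the $\log d$ from $b_h$. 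I expect the only non-routine point to be exactly this estimate for $b_h$: because $\tfrac18=(2\sqrt2)^{-2}$, the denominator $\sqrt{\bar{k}\big(193T(\sqrt{\zeta-1}+1)\big)^{-1}+\tfrac18}-(2\sqrt2)^{-1}$ vanishes as $T=N^{1/5}\to\infty$, so $b_h$ is \emph{not} $O(\log d)$ but grows like $N^{2/5}$, and one must confirm that $T b_h$ is still $o(N)$ up to logarithmic factors so that it does not disturb the leading order; everything else is bookkeeping.
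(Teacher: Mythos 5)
Your proposal is correct and follows essentially the same route as the paper: verify the parameter choices satisfy the hypotheses of Theorem~\ref{main1}, use $ST\ge 240\bar{k}^2L_H^{1/2}\Delta_F\epsilon^{-3/2}$ to get $\mE[\mu(\bx_{out})]\le\epsilon^{3/2}$, and then bound the oracle count by $SN+ST(b_g+b_h)$. Your observation about $b_h$ is in fact \emph{more} careful than the paper's own accounting: since $\tfrac18=(2\sqrt2)^{-2}$, the denominator in $b_h$ behaves like $\Theta(T^{-4})$, so $b_h=\tilde\Theta(N^{2/5})$ rather than $O(\log d)$ as the paper's closing remark suggests, but as you note $STb_h=\tilde O(L_H^{1/2}\Delta_F N^{2/5}\epsilon^{-3/2})$ is still dominated and the stated bound stands.
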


\begin{proof}
	The parameter setting in Corollary~\ref{corollary1} satisfies the requirements of Theorem~\ref{main1}. The epoch size $S$ enforce $\mE[\mu(\bx_{out})]\leq \epsilon$, which implies that $\bx_{out}$ is an $(\epsilon, \sqrt{L_H\epsilon})$-approximate local minimum. 
	Note that Algorithm~\ref{alg:svr_cubic} requires calculating full gradient $\nabla F$ and Hessian $\nabla^2 F$ at the beginning of each epoch with $N$ SO calls. Inside each epoch, it needs to calculate stochastic gradient and Hessian with $b_g+b_h$ SO calls at each iteration. Thus, the total number of SO calls is 
	\begin{align*}
		SN+(ST)(b_g+b_h)&\leq N+240 \bar{k}^2 L_H^{1/2}\Delta_F N^{4/5}\epsilon^{-3/2}
		+240 \bar{k}^2 L_H^{1/2}\Delta_F \epsilon^{-3/2}(b_g+b_h)\\
		&=\tilde{O}(N+L_H^{1/2}\Delta_F N^{4/5}\epsilon^{-3/2}),
	\end{align*}
where the $\tilde{O}$ comes from $\log d$ in $b_h$.
\end{proof}

In practice, finding the exact solution to the cubic-regularized Newton subproblem \eqref{m_cubic} is not always computationally desirable~\cite{agarwal2020adaptive,nester2006cubic,cartis2011adaptive,cartis2011adaptive2}. Instead, we can solve the subproblem \emph{inexactly}, but yet guarantee theoretical properties of the algorithm. More specifically, we propose to solve the cubic-regularized Newton subproblem inexactly, but the one that satisfies the conditions in Definition~\ref{def:inexact_sol} below. It is then proved in Theorem~\ref{main2} that the complexity of the algorithm with inexact solution to its subproblem is the same as the original algorithm, except for an $O(1)$ constant.

\begin{definition}[Inexact solution] \label{def:inexact_sol}
Given a $\delta>0$, $\tilde{\bh}^s_t$ is a $\delta$-inexact solution to \eqref{m_cubic} if it satisfies 
\begin{align}
    m^s_t(\tilde{\bh}^s_t)&\leq -\frac{\sigma}{12}\norm{\tilde{\bh}^s_t}^3+\delta, \label{def:1}\\
    \norm{\nabla m^s_t(\tilde{\bh}^s_t)}&\leq (\sigma)^{1/3}\delta^{2/3}, \label{def:2}\\ 
    \lambda_{\min} (\nabla^2 m^s_t(\tilde{\bh}^s_t)) &\geq -(\sigma)^{2/3}\delta^{1/3}. \label{def:3}
\end{align}
\end{definition}

The following lemma is parallel to Lemma~\ref{lemma:optimal} when the subproblem is solved inexactly.
\begin{lemma}\label{lemma2:optimal}
Under Assumption~\ref{assumption:embedded}, if $\tilde{\bh}^s_t$ is a $\delta$-inexact solution to \eqref{m_cubic}, then 
\begin{align}
    \fprod{\bv^s_t,\tilde{\bh}^s_t}+\frac{1}{2}\fprod{\bU^s_t\tilde{\bh}^s_t,\tilde{\bh}^s_t}+\frac{\sigma}{6}\norm{\tilde{\bh}^s_t}^3&\leq-\frac{\sigma}{12}\norm{\tilde{\bh}^s_t}^3+\delta, \label{optimal2:1}\\
     \norm{\bv^s_t+\bU^s_t\tilde{\bh}^s_t+(\frac{\sigma}{2}\norm{\tilde{\bh}^s_t})\tilde{\bh}^s_t}&\leq (\sigma)^{1/3}\delta^{2/3}, \label{optimal2:2}\\
    \bU^s_t+\sigma\norm{\tilde{\bh}^s_t}\bI&\succeq -(\sigma)^{2/3}\delta^{1/3}\bI. \label{optimal2:3}
\end{align}
\end{lemma}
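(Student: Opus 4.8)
The plan is to follow the proof of Lemma~\ref{lemma:optimal} almost verbatim, reading each of the three conclusions directly off the defining conditions \eqref{def:1}--\eqref{def:3} of a $\delta$-inexact solution once the gradient and Hessian of the model $m^s_t$ have been computed. Under Assumption~\ref{assumption:embedded} the tangent space $T_{\bx^s_t}\cM$ is a linear subspace of $\mR^{m\times n}$, so $m^s_t$ is an ordinary smooth function on that subspace and the $\nabla m^s_t$, $\nabla^2 m^s_t$ appearing in Definition~\ref{def:inexact_sol} are simply its gradient and Hessian as a function on $T_{\bx^s_t}\cM$. Inequality \eqref{optimal2:1} is then immediate: by the definition of $m^s_t$ in \eqref{m_cubic} its left-hand side is exactly $m^s_t(\tilde{\bh}^s_t)$, which \eqref{def:1} bounds by $-\tfrac{\sigma}{12}\norm{\tilde{\bh}^s_t}^3+\delta$.

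For \eqref{optimal2:2} and \eqref{optimal2:3} I would use the standard derivative formulas for the cubic model (as in Lemma~24 of \cite{pmlr-v80-zhou18d}). Since $\nabla\norm{\bh}^3=3\norm{\bh}\bh$ and $\bU^s_t$ is self-adjoint, $\nabla m^s_t(\bh)=\bv^s_t+\bU^s_t[\bh]+\tfrac{\sigma}{2}\norm{\bh}\bh$, so the vector inside the norm in \eqref{optimal2:2} is precisely $\nabla m^s_t(\tilde{\bh}^s_t)$ and \eqref{def:2} gives the claim. For $\bh\ne 0$, $\nabla^2\norm{\bh}^3$ is the operator $\eta\mapsto 3\norm{\bh}\eta+3\norm{\bh}^{-1}\fprod{\bh,\eta}\bh$, hence $\nabla^2 m^s_t(\bh)=\bU^s_t+\tfrac{\sigma}{2}\norm{\bh}\bI+\tfrac{\sigma}{2}\norm{\bh}^{-1}(\eta\mapsto\fprod{\bh,\eta}\bh)$. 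The rank-one operator $\eta\mapsto\fprod{\bh,\eta}\bh$ is positive semidefinite and, by Cauchy--Schwarz, bounded above by $\norm{\bh}^2\bI$, so $\tfrac{\sigma}{2}\norm{\bh}^{-1}(\eta\mapsto\fprod{\bh,\eta}\bh)\preceq\tfrac{\sigma}{2}\norm{\bh}\bI$ and therefore $\nabla^2 m^s_t(\tilde{\bh}^s_t)\preceq\bU^s_t+\sigma\norm{\tilde{\bh}^s_t}\bI$. Combining this with \eqref{def:3}, i.e.\ $\nabla^2 m^s_t(\tilde{\bh}^s_t)\succeq-(\sigma)^{2/3}\delta^{1/3}\bI$, yields \eqref{optimal2:3}. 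The degenerate case $\tilde{\bh}^s_t=0$ is trivial: there $\nabla m^s_t(0)=\bv^s_t$, $\nabla^2 m^s_t(0)=\bU^s_t$, and \eqref{def:1}--\eqref{def:3} reduce directly to the three assertions.

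The main obstacle is conceptual rather than computational: one must (i) justify identifying the $\nabla m^s_t$, $\nabla^2 m^s_t$ of Definition~\ref{def:inexact_sol} with the intrinsic tangent-subspace derivatives rather than the ambient Euclidean ones, which is legitimate under Assumption~\ref{assumption:embedded} because $\bv^s_t\in T_{\bx^s_t}\cM$ and $\bU^s_t$ maps $T_{\bx^s_t}\cM$ into itself; and (ii) track the rank-one correction in the Hessian of the cubic term carefully, since this correction is exactly why the coefficient of $\norm{\tilde{\bh}^s_t}$ degrades from $\tfrac{\sigma}{2}$ in \eqref{optimal:2} to $\sigma$ in \eqref{optimal2:3} — for an inexact solution we can no longer invoke the global-minimality argument behind \eqref{optimal:2}, and must instead absorb the positive-semidefinite rank-one term by enlarging the multiple of the identity.
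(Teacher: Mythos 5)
Your proposal is correct and follows essentially the same route as the paper: read \eqref{optimal2:1} and \eqref{optimal2:2} directly off \eqref{def:1}--\eqref{def:2} by expanding $m^s_t$ and $\nabla m^s_t$, then compute $\nabla^2 m^s_t(\tilde{\bh}^s_t)=\bU^s_t+\tfrac{\sigma}{2}\norm{\tilde{\bh}^s_t}\bI+\tfrac{\sigma}{2}\norm{\tilde{\bh}^s_t}^{-1}\tilde{\bh}^s_t(\tilde{\bh}^s_t)^\top$ and absorb the rank-one term into the identity via Cauchy--Schwarz to get $\bU^s_t+\sigma\norm{\tilde{\bh}^s_t}\bI\succeq\nabla^2 m^s_t(\tilde{\bh}^s_t)\succeq-\sigma^{2/3}\delta^{1/3}\bI$, exactly as in the paper. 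Your explicit treatment of the degenerate case $\tilde{\bh}^s_t=0$ is a minor addition the paper omits.
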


\begin{proof}
Inequalities~\eqref{optimal2:1} and \eqref{optimal2:2} follow from expanding $m^s_t(\tilde{\bh}^s_t)$ and $\nabla m^s_t(\tilde{\bh}^s_t)$ in \eqref{def:1} and \eqref{def:2}.
To show \eqref{optimal2:3}, note that $\nabla^2 m^s_t(\tilde{\bh}^s_t)=\bU^s_t+\lambda \bI+\lambda(\frac{\tilde{\bh}^s_t}{\norm{\tilde{\bh}^s_t}})(\frac{\tilde{\bh}^s_t}{\norm{\tilde{\bh}^s_t}})^\top$, where $\lambda=\frac{\sigma \norm{\tilde{\bh}^s_t}}{2}$. We have
\begin{align*}
    \bU^s_t+2\lambda \bI \succeq \bU^s_t+\lambda \bI+\lambda(\frac{\tilde{\bh}^s_t}{\norm{\tilde{\bh}^s_t}})(\frac{\tilde{\bh}^s_t}{\norm{\tilde{\bh}^s_t}})^\top
     \succeq-(\sigma)^{2/3}\delta^{1/3}\bI,
\end{align*}
where the first inequality follows from the Cauchy--Schwarz inequality, $\norm{\bv}\geq \frac{\fprod{\bv, \tilde{\bh}^s_t}}{\norm{\tilde{\bh}^s_t}}$ for any $\bv\in \mR^{m\times n}$, and the second inequality follows from \eqref{optimal2:3}.
\end{proof}

Parallel to Lemma~\ref{lemma:bound_stepsize_via_sigma}, Lemma~\ref{lemma:bound_stepsize_via_sigma2} provides an upper bound on $\norm{\tilde{\bh}_s^t}$ which then provides a required (lower) bound on the cubic regularization parameter $\sigma$ to have the iterates close enough to the epoch points - see Remark~\ref{remark:control_stepsize}.
\begin{lemma}\label{lemma:bound_stepsize_via_sigma2}
	Under Assumption~\ref{assumption:smooth}-\ref{assumption:positiveinj}, given a constant $C>0$ and $\sigma>[\frac{\delta^{2/3}+\sqrt{\delta^{4/3}+2C^2\cdot(C\cdot c_H+c_g)}}{C^2}]^2$, we have $\norm{\tilde{\bh}^s_t}< C$. 
\end{lemma}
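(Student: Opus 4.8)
The plan is to repeat the argument of Lemma~\ref{lemma:bound_stepsize_via_sigma} essentially verbatim, with the exact stationarity identity \eqref{optimal:1} replaced by its inexact analogue \eqref{optimal2:2}. If $\tilde{\bh}^s_t=0$ there is nothing to prove, so assume $\tilde{\bh}^s_t\neq 0$. Writing $\tfrac{\sigma}{2}\norm{\tilde{\bh}^s_t}\tilde{\bh}^s_t=-\big(\bv^s_t+\bU^s_t\tilde{\bh}^s_t\big)+\big(\bv^s_t+\bU^s_t\tilde{\bh}^s_t+\tfrac{\sigma}{2}\norm{\tilde{\bh}^s_t}\tilde{\bh}^s_t\big)$, taking norms, and using the triangle inequality, the inexact first-order bound \eqref{optimal2:2}, and the uniform bounds $\norm{\bv^s_t}\le c_g$, $\norm{\bU^s_t}_{op}\le c_H$ from \eqref{assumptionremark:bound_hessian_grad}, I would obtain
\[
\tfrac{\sigma}{2}\norm{\tilde{\bh}^s_t}^2\;\le\; c_g + c_H\norm{\tilde{\bh}^s_t} + \sigma^{1/3}\delta^{2/3}.
\]
(Equivalently, one may take the inner product of \eqref{optimal2:2} with $\tilde{\bh}^s_t/\norm{\tilde{\bh}^s_t}$ and invoke Cauchy--Schwarz.)

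In the regime of interest $\sigma$ is large — in particular $\sigma\ge 1$, consistent with \eqref{cond:sigma1} in Remark~\ref{remark:control_stepsize} — so $\sigma^{1/3}\delta^{2/3}\le\sigma^{1/2}\delta^{2/3}$, and the previous display gives $q(\norm{\tilde{\bh}^s_t})\le 0$ for the upward parabola $q(x)\triangleq \tfrac{\sigma}{2}x^2-c_Hx-\big(c_g+\sqrt{\sigma}\,\delta^{2/3}\big)$. Since $q(0)=-(c_g+\sqrt{\sigma}\,\delta^{2/3})<0$, the smaller root of $q$ is negative, so it suffices to verify $q(C)>0$: then $C>0$ lies strictly above the larger root of $q$, whence $\norm{\tilde{\bh}^s_t}<C$. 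Thus I am reduced to showing
\[
\tfrac{\sigma}{2}C^2 - c_HC - c_g - \sqrt{\sigma}\,\delta^{2/3} \;>\; 0 .
\]

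Finally, I would read this last inequality as a strict quadratic inequality in $w=\sqrt{\sigma}$, namely $C^2w^2-2\delta^{2/3}w-2(Cc_H+c_g)>0$, whose positive root is $w_\star=\big(\delta^{2/3}+\sqrt{\delta^{4/3}+2C^2(Cc_H+c_g)}\big)/C^2$; it therefore holds precisely when $\sqrt{\sigma}>w_\star$, i.e. under the stated hypothesis $\sigma>\big[\big(\delta^{2/3}+\sqrt{\delta^{4/3}+2C^2(Cc_H+c_g)}\big)/C^2\big]^2$, completing the proof. As a sanity check, setting $\delta=0$ recovers $\sigma>2(Cc_H+c_g)/C^2$ and Lemma~\ref{lemma:bound_stepsize_via_sigma}. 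The only point that is not a direct transcription of the exact case is the handling of the inexactness term $\sigma^{1/3}\delta^{2/3}$: it couples nonlinearly with $\sigma$, so the condition on $\sigma$ is no longer linear, and the threshold must be isolated from a quadratic in $\sqrt{\sigma}$ after the elementary reduction $\sigma^{1/3}\le\sigma^{1/2}$ — this bookkeeping is the (mild) main obstacle.
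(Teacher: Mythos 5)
Your proof is correct and follows essentially the same route as the paper's: dot \eqref{optimal2:2} with $\tilde{\bh}^s_t$, apply Cauchy--Schwarz and the uniform bounds \eqref{assumptionremark:bound_hessian_grad}, absorb the inexactness term via $\sigma^{1/3}\delta^{2/3}\leq\sigma^{1/2}\delta^{2/3}$, and isolate the threshold from a quadratic in $\sqrt{\sigma}$. The only cosmetic difference is that the paper bounds the larger root of the quadratic in $\norm{\tilde{\bh}^s_t}$ and invokes monotonicity in $\sigma$, whereas you verify $q(C)>0$ directly; you are also more careful than the paper in flagging that the step $\sigma^{1/3}\leq\sigma^{1/2}$ requires $\sigma\geq 1$.
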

\begin{proof}
	Based on~\eqref{optimal2:2} and Cauchy--Schwarz inequality, we have 
	\begin{equation}
		\fprod{\bv^s_t,\tilde{\bh}^s_t}+\fprod{\bU^s_t\tilde{\bh}^s_t,\tilde{\bh}^s_t}+\frac{\sigma}{2}\norm{\tilde{\bh}^s_t}^3\leq (\sigma)^{1/3}\delta^{2/3}\cdot\norm{\tilde{\bh}^s_t}. 
	\end{equation}
	which implies, 
	\begin{equation}
		\frac{\sigma}{2}\norm{\tilde{\bh}^s_t}^3\leq \norm{\bv^s_t}\cdot\norm{\tilde{\bh}^s_t}+\norm{\bU^s_t}_{op}\cdot\norm{\tilde{\bh}^s_t}^2+(\sigma)^{1/3}\delta^{2/3}\cdot\norm{\tilde{\bh}^s_t}.
	\end{equation}
	Based on~\eqref{assumptionremark:bound_hessian_grad} and dividing both sides by $\norm{\tilde{\bh}^s_t}$, we have 
	\begin{equation}
		\frac{\sigma}{2}\norm{\tilde{\bh}^s_t}^2-c_H\cdot \norm{\tilde{\bh}^s_t}-(+\sigma^{1/3}\delta^{2/3})\leq 0,
	\end{equation}
	which implies 
	\begin{align}
		\norm{\tilde{\bh}^s_t}&\leq\frac{c_H+\sqrt{c_H^2+2\sigma (c_g+\sigma^{1/3}\delta^{2/3})}}{\sigma}
		\leq\frac{c_H+\sqrt{c_H^2+2\sigma (c_g+\sigma^{1/2}\delta^{2/3})}}{\sigma}.\label{lemmaproof:2}
	\end{align}
	Note that the right hand side of~\eqref{lemmaproof:2} is a monotonic decreasing function on $\sigma$. After some simple manipulation, we derive that if $\sigma>[\frac{\delta^{2/3}+\sqrt{\delta^{4/3}+2C^2\cdot(C\cdot c_H+c_g)}}{C^2}]^2$, then the right hand side of~\eqref{lemmaproof:2} is upper bounded by $C$, which implies $\norm{\tilde{\bh}^s_t} <C$. 
\end{proof}

\begin{remark}\label{remark:control_stepsize}
	Using Lemma~\ref{lemma:bound_stepsize_via_sigma}, setting $C=\text{inj}(\cM)/T$, where $T$ is the epoch length of the algorithm, we have
	\begin{equation}\label{cond:sigma2}
		\sigma>[\frac{T^2\delta^{2/3}+\sqrt{T^4\delta^{4/3}+2(\text{inj}(\cM))^2(\text{inj}(\cM)Tc_H+c_g T^2)}}{(\text{inj}(\cM))^2}]^2.
	\end{equation}
	Given the lower bound on $\sigma$, for any epoch $s$ and any iteration $t$ inside this epoch, we have
	\begin{equation}
		d(\hat{\bx}^s,\bx^s_t)\leq \sum_{i=1}^{t} d(\bx^s_{i-1},\bx^s_i) \leq \sum_{i=1}^{t}\norm{\bh^s_i}<\text{inj}(\cM). 
	\end{equation}
\end{remark}

Since it is difficult to quantify the difference of $\bh$ with the exact solution $\bh^s_t$, i.e. $|\norm{\bh}-\norm{\bh^s_t}|$, we need to establish results similar to Lemmas~\ref{lemma:small2} and \ref{lemma:mu_after_two_small} based on \eqref{optimal2:3}.

\begin{lemma}\label{lemma:small3}
Let $\tilde{\bh}^s_t$ be a $\delta$-inexact solution to \eqref{m_cubic} with $\sigma\geq 2 L_H$ that satisfies~\eqref{cond:sigma2}, then under Assumptions~\ref{assumption:smooth}-\ref{assumption:positiveinj}, for any $\bh\in\mR^{m\times n}$ such that $\norm{\bh}<\text{inj}(\cM)$, we have
  \begin{equation}
      -\lambda_{\min} (\text{Hess} F(\text{Exp}_{\bx^s_t}(\bh)) \leq \frac{3\sigma}{2}\norm{\bh}+\norm{H_{\bx}-U^s_t}_{op}+\sigma |\norm{\tilde{\bh}^s_t}-\norm{\bh}|+(\sigma)^{2/3}\delta^{1/3},
  \end{equation}
where $\lambda_{\min} (\text{Hess} F(\bx))$ is defined as $\lambda_{\min} (\text{Hess} F(\bx))\triangleq\inf_{\eta\in T_{\bx}\cM} \{\frac{\fprod{\text{Hess} F(\bx)\eta,\eta}}{\norm{\eta}}\}$.
\end{lemma}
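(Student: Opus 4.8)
The plan is to mirror the proof of Lemma~\ref{lemma:small2} almost verbatim, replacing the exact second-order optimality condition \eqref{optimal:2} by its inexact counterpart \eqref{optimal2:3}. Write $\by = \text{Exp}_{\bx^s_t}(\bh)$, $\bx = \bx^s_t$, $H_{\bx}=\text{Hess}F(\bx)$, $H_{\by}=\text{Hess}F(\by)$, $\Gamma=\Gamma_{\bx}^{\by}$, $\Gamma^{-1}=\Gamma_{\by}^{\bx}$; the conditions $\norm{\bh}<\text{inj}(\cM)$ and $\sigma$ satisfying~\eqref{cond:sigma2} guarantee, exactly as in Remark~\ref{remark:control_stepsize}, that $\by$ and the relevant distances are well defined with $d(\bx,\by)=\norm{\bh}$.

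First I would chain the same two operator inequalities as in Lemma~\ref{lemma:small2}. From the $H$-smoothness bound \eqref{lc0} we get $H_{\by}\succeq \Gamma H_{\bx}\Gamma^{-1}-L_H\norm{\bh}\,\mathbf{I}_{\by}$; adding and subtracting $\Gamma U^s_t\Gamma^{-1}$, and using the triangle inequality together with the isometry of $\Gamma$, gives
\[
H_{\by}\succeq \Gamma U^s_t\Gamma^{-1}-\norm{H_{\bx}-U^s_t}_{op}\,\mathbf{I}_{\by}-L_H\norm{\bh}\,\mathbf{I}_{\by}.
\]
The one new ingredient is a lower bound on $\Gamma U^s_t\Gamma^{-1}$. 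Here I would invoke \eqref{optimal2:3}, i.e.\ $\bU^s_t+\sigma\norm{\tilde{\bh}^s_t}\,\mathbf{I}_{\bx}\succeq -(\sigma)^{2/3}\delta^{1/3}\mathbf{I}_{\bx}$, and transport it by the same contradiction argument used in Lemma~\ref{lemma:small2}: if some $\xi\in T_{\by}\cM$ satisfied $\fprod{\xi,\Gamma U^s_t\Gamma^{-1}\xi}+\sigma\norm{\tilde{\bh}^s_t}\norm{\xi}^2<-(\sigma)^{2/3}\delta^{1/3}\norm{\xi}^2$, then $\eta=\Gamma^{-1}\xi$ would violate \eqref{optimal2:3} because $\Gamma$ preserves inner products and norms. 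Hence $\Gamma U^s_t\Gamma^{-1}\succeq -\big(\sigma\norm{\tilde{\bh}^s_t}+(\sigma)^{2/3}\delta^{1/3}\big)\mathbf{I}_{\by}$, and combining,
\[
H_{\by}\succeq -\big(\sigma\norm{\tilde{\bh}^s_t}+(\sigma)^{2/3}\delta^{1/3}+\norm{H_{\bx}-U^s_t}_{op}+L_H\norm{\bh}\big)\mathbf{I}_{\by}.
\]

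Finally I would read off $-\lambda_{\min}(H_{\by})$ from the bracket, bound $\sigma\norm{\tilde{\bh}^s_t}\le\sigma\norm{\bh}+\sigma\,|\norm{\tilde{\bh}^s_t}-\norm{\bh}|$ by the triangle inequality for real numbers, and absorb $L_H\norm{\bh}\le\tfrac{\sigma}{2}\norm{\bh}$ using $\sigma\ge 2L_H$. This yields exactly
\[
-\lambda_{\min}(H_{\by})\le \tfrac{3\sigma}{2}\norm{\bh}+\norm{H_{\bx}-U^s_t}_{op}+\sigma\,|\norm{\tilde{\bh}^s_t}-\norm{\bh}|+(\sigma)^{2/3}\delta^{1/3},
\]
which is the claim. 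I do not expect a genuine obstacle: the only mild care needed is that \eqref{optimal2:3} carries a coefficient $\sigma$ (rather than $\sigma/2$ as in \eqref{optimal:2}) on $\norm{\tilde{\bh}^s_t}$ and an extra additive term $(\sigma)^{2/3}\delta^{1/3}$, and — as noted in the paragraph preceding the lemma — since we cannot control $|\norm{\bh}-\norm{\tilde{\bh}^s_t}|$ against anything sharper, this term is kept as is in the final bound.
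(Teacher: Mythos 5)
Your proposal is correct and follows essentially the same route as the paper's proof: the same chain of operator inequalities from \eqref{lc0}, the triangle inequality, and the transported inexact second-order condition \eqref{optimal2:3}, followed by the same splitting of $\sigma\norm{\tilde{\bh}^s_t}$ and the absorption $L_H\le\sigma/2$. The only cosmetic difference is that you spell out the contradiction argument for transporting \eqref{optimal2:3}, which the paper carries out in Lemma~\ref{lemma:small2} and merely cites here.
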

\begin{proof}
Denote $\text{Exp}_{\bx^s_t}(\bh)$ by $\by$ and $\bx^s_t$ by $\bx$.  Furthermore, let the identity operator at $\bx$ be denoted by $\mathbf{I}_{\bx}$, i.e. $\mathbf{I}_{\bx}(\eta)=\eta$ for any $\eta\in T_{\bx} \cM$. 
  We have 
\begin{align*}
    H_{\by} &\succeq \Gamma_{\bx}^{\by} H_{\bx} \Gamma_{\by}^{\bx}-L_H \norm{\bh} \mathbf{I}_{\by}\\
    & \succeq \Gamma_{\bx}^{\by} U^s_t \Gamma_{\by}^{\bx}-\norm{\Gamma_{\bx}^{\by} H_{\bx} \Gamma_{\by}^{\bx}-\Gamma_{\bx}^{\by} U^s_t \Gamma_{\by}^{\bx}}_{op} \mathbf{I}_{\by}-L_H \norm{\bh} \mathbf{I}_{\by}\\
    & \succeq -(\sigma\norm{\tilde{\bh}^s_t}+(\sigma)^{2/3}\delta^{1/3}) \mathbf{I}_{\by}-\norm{H_{\bx}-U^s_t}_{op} \mathbf{I}_{\by}-L_H\norm{\bh} \mathbf{I}_{\by},
\end{align*}
where the first inequality follows from \eqref{lc0}, the second inequality follows from the definition of the operator norm and the triangle inequality, and the third inequality follows from the isometric property of the parallel transport $\Gamma$ and \eqref{optimal2:3}.
Therefore, we have 
\begin{align*}
    -\lambda_{\min} (H_{\by}) &\leq (\sigma\norm{\tilde{\bh}^s_t}+(\sigma)^{2/3}\delta^{1/3})+\norm{H_{\bx}-U^s_t}_{op}+ L_H\norm{\bh}\\
    &=\sigma (\norm{\tilde{\bh}^s_t}-\norm{\bh})+\norm{H_{\bx}-U^s_t}_{op}+ (L_H+\sigma)\norm{\bh}+(\sigma)^{2/3}\delta^{1/3}\\
    &\leq \frac{3\sigma}{2}\norm{\bh}+\norm{H_{\bx}-U^s_t}_{op}+\sigma |\norm{\tilde{\bh}^s_t}-\norm{\bh}|+(\sigma)^{2/3}\delta^{1/3},
\end{align*}
where the last inequality holds because $L_H \leq \sigma/2$. 
\end{proof}

Recall the $\mu(\bx)$ definition in \eqref{mux}, combining Lemmas \ref{lemma:small1} and \ref{lemma:small3}, we have the following result.
\begin{lemma}\label{lemma2:mu_after_two_small}
Setting $\sigma=\bar{k} L_H$ with $\bar{k}\geq 2$ such that it satisfies~\eqref{cond:sigma2}, under Assumptions~\ref{assumption:smooth}-\ref{assumption:positiveinj}, for any $\delta$-inexact solution $\tilde{\bh}^s_t$,  we have  
\begin{align*}
     \mu (\text{Exp}_{\bx^s_t}(\tilde{\bh}^s_t))&\leq 9(\bar{k})^{3/2}[\frac{27(\sigma)^{3/2}}{8}\norm{\tilde{\bh}^s_t}^3+\norm{\text{grad} F(\bx^s_t)-\bv^s_t}^{3/2}
     +(\sigma)^{-3/2}\norm{\text{Hess}F(\bx^s_t)-\bU^s_t}^3_{op}+(\sigma)^{1/2}\delta].
\end{align*}
 \end{lemma}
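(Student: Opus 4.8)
The plan is to specialize the two one-step estimates of Lemmas~\ref{lemma:small1} and~\ref{lemma:small3} to the inexact step $\bh=\tilde{\bh}^s_t$ and then raise them to the powers dictated by the definition~\eqref{mux} of $\mu$. First I would verify applicability: since $\bar{k}\geq 2$ we have $\sigma=\bar{k}L_H\geq 2L_H$, and since $\sigma$ satisfies~\eqref{cond:sigma2}, Lemma~\ref{lemma:bound_stepsize_via_sigma2} with $C=\text{inj}(\cM)/T$ gives $\norm{\tilde{\bh}^s_t}<\text{inj}(\cM)$, so the estimates of Lemmas~\ref{lemma:small1} and~\ref{lemma:small3} are valid at $\bh=\tilde{\bh}^s_t$. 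Evaluating Lemma~\ref{lemma:small1} at $\bh=\tilde{\bh}^s_t$ and bounding $\norm{\nabla m^s_t(\tilde{\bh}^s_t)}\leq\sigma^{1/3}\delta^{2/3}$ via~\eqref{def:2} yields
\begin{equation*}
\norm{\text{grad}F(\text{Exp}_{\bx^s_t}(\tilde{\bh}^s_t))}\leq\sigma\norm{\tilde{\bh}^s_t}^2+\norm{\text{grad}F(\bx^s_t)-\bv^s_t}+\frac{1}{\sigma}\norm{\text{Hess}F(\bx^s_t)-\bU^s_t}_{op}^2+\sigma^{1/3}\delta^{2/3},
\end{equation*}
while evaluating Lemma~\ref{lemma:small3} at $\bh=\tilde{\bh}^s_t$ makes the correction term $\sigma\,|\norm{\tilde{\bh}^s_t}-\norm{\bh}|$ vanish, leaving
\begin{equation*}
-\lambda_{\min}(\text{Hess}F(\text{Exp}_{\bx^s_t}(\tilde{\bh}^s_t)))\leq\frac{3\sigma}{2}\norm{\tilde{\bh}^s_t}+\norm{\text{Hess}F(\bx^s_t)-\bU^s_t}_{op}+\sigma^{2/3}\delta^{1/3}.
\end{equation*}

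Next I would apply the elementary bound $(\sum_{i=1}^n a_i)^p\leq n^{p-1}\sum_{i=1}^n a_i^p$, valid for $a_i\geq 0$ and $p\geq 1$. With $n=4$, $p=3/2$ (hence $n^{p-1}=2$) on the first inequality, using $(\sigma^{-1}\norm{\cdot}_{op}^2)^{3/2}=\sigma^{-3/2}\norm{\cdot}_{op}^3$ and $(\sigma^{1/3}\delta^{2/3})^{3/2}=\sigma^{1/2}\delta$, I get an upper bound on $\norm{\text{grad}F(\text{Exp}_{\bx^s_t}(\tilde{\bh}^s_t))}^{3/2}$; with $n=3$, $p=3$ (hence $n^{p-1}=9$) on the second inequality, using $(3\sigma/2)^3=27\sigma^3/8$ and $(\sigma^{2/3}\delta^{1/3})^3=\sigma^2\delta$, I get an upper bound on $(-\lambda_{\min}(\text{Hess}F(\text{Exp}_{\bx^s_t}(\tilde{\bh}^s_t))))^3$. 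Dividing the latter by $L_H^{3/2}$ and substituting $L_H^{-3/2}=\bar{k}^{3/2}\sigma^{-3/2}$ (which follows from $\sigma=\bar{k}L_H$) turns it into exactly $9\bar{k}^{3/2}[\,\frac{27\sigma^{3/2}}{8}\norm{\tilde{\bh}^s_t}^3+\sigma^{-3/2}\norm{\text{Hess}F(\bx^s_t)-\bU^s_t}_{op}^3+\sigma^{1/2}\delta\,]$, which is bounded above by the target right-hand side (it is missing only the nonnegative summand $\norm{\text{grad}F(\bx^s_t)-\bv^s_t}^{3/2}$). For the gradient contribution I would note that each of its coefficients after the power step — namely $2$ multiplying $\sigma^{3/2}\norm{\tilde{\bh}^s_t}^3$, and $2$ multiplying each of the other three summands — is no larger than the corresponding target coefficient ($9\bar{k}^{3/2}\cdot\frac{27}{8}$ for the first, $9\bar{k}^{3/2}$ for the others), because $\bar{k}\geq 2$ forces $9\bar{k}^{3/2}\geq 9\cdot 2^{3/2}>2$ and $\frac{27}{8}>1$. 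Hence $\norm{\text{grad}F(\text{Exp}_{\bx^s_t}(\tilde{\bh}^s_t))}^{3/2}$ is also bounded by the target expression, and taking the maximum of the two quantities, as in~\eqref{mux}, yields the claim.

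The calculation itself is routine; the only delicate point is the constant bookkeeping — in particular, confirming that the single prefactor $9\bar{k}^{3/2}$ simultaneously absorbs the factor $2$ produced by the power-mean step on the gradient side and the factor $9$ produced on the eigenvalue side after the $L_H^{-3/2}=\bar{k}^{3/2}\sigma^{-3/2}$ substitution. This works precisely because of the hypothesis $\bar{k}\geq 2$, and because specializing Lemma~\ref{lemma:small3} to $\bh=\tilde{\bh}^s_t$ removes the $|\norm{\bh}-\norm{\tilde{\bh}^s_t}|$-type term, so that no summand beyond those already present in Lemma~\ref{lemma:small1} survives.
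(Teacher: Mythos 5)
Your proposal is correct and follows essentially the same route as the paper's proof: specialize Lemmas~\ref{lemma:small1} and~\ref{lemma:small3} to $\bh=\tilde{\bh}^s_t$ (killing the $|\norm{\bh}-\norm{\tilde{\bh}^s_t}|$ term), bound $\norm{\nabla m^s_t(\tilde{\bh}^s_t)}$ via \eqref{def:2}, apply $(a+b+c+d)^{3/2}\leq 2(a^{3/2}+b^{3/2}+c^{3/2}+d^{3/2})$ and $(a+b+c)^3\leq 9(a^3+b^3+c^3)$, substitute $L_H^{-3/2}=\bar{k}^{3/2}\sigma^{-3/2}$, and absorb both branches into the prefactor $9\bar{k}^{3/2}$. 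The constant bookkeeping matches the paper's.
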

\begin{proof}
 Recall $\mu (\bx) =\max \{\norm{\text{grad} F(\bx)}^{3/2}, -L_H^{-3/2}\lambda^3_{min}(\text{Hess}F(\bx))\}$. Next, we apply Lemmas \ref{lemma:small1} and \ref{lemma:small3} to upper bound $\norm{\text{grad} F(\bx)}^{3/2}$ and $-(L_H^{3/2})^{-1}[\lambda_{\min}(\text{Hess}F(\bx))]^3\}$, respectively. 
 \begin{align*}
     &\norm{\text{grad} F(\text{Exp}_{\bx^s_t}(\tilde{\bh}^s_t))}^{3/2}\\
     &\leq [\sigma\norm{\tilde{\bh}^s_t}^2+\norm{\text{grad} F(\bx^s_t)-\bv^s_t}+\frac{1}{\sigma}\norm{\text{Hess} F(\bx^s_t)-\bU^s_t}^2_{op}+\norm{\nabla m^s_t(\tilde{\bh}^s_t)}]^{3/2}\\
     &\leq 2[(\sigma)^{3/2}\norm{\tilde{\bh}^s_t}^3+\norm{\text{grad} F(\bx^s_t)-\bv^s_t}^{3/2}+(\sigma)^{-3/2}\norm{\text{Hess} F(\bx^s_t)-\bU^s_t}^3_{op}+\norm{\nabla m^s_t(\tilde{\bh}^s_t)}^{3/2}]\\
     &\leq 2[(\sigma)^{3/2}\norm{\tilde{\bh}^s_t}^3+\norm{\text{grad} F(\bx^s_t)-\bv^s_t}^{3/2}+(\sigma)^{-3/2}\norm{\text{Hess} F(\bx^s_t)-\bU^s_t}^3_{op}+(\sigma)^{1/2}\delta],
 \end{align*}
 where the first inequality follows from Lemma \ref{lemma:small1}, the second inequality holds due to the inequality $(a+b+c+d)^{3/2}\leq 2(a^{3/2}+b^{3/2}+c^{3/2}+d^{3/2})$, and the third inequality follows from \eqref{def:2}.
 \begin{align*}
     -L_H^{-3/2}[\lambda_{\min}(\text{Hess}F(\tilde{\bh}^s_t))]^3
     &=-(\bar{k})^{3/2} (\sigma)^{-3/2}[\lambda_{\min}(\text{Hess}F(\tilde{\bh}^s_t))]^3\\
     &\leq (\bar{k})^{3/2} (\sigma)^{-3/2}[\frac{3\sigma}{2}\norm{\tilde{\bh}^s_t}+\norm{\text{Hess} F(\bx^s_t)-\bU^s_t}_{op}+(\sigma)^{2/3}\delta^{1/3}]^3\\
     &\leq 9(\bar{k})^{3/2}[\frac{27(\sigma)^{3/2}}{8}\norm{\tilde{\bh}^s_t}^3+(\sigma)^{-3/2}\norm{\text{Hess}F(\bx^s_t)-\bU^s_t}^3_{op}+(\sigma)^{1/2}\delta],
 \end{align*}
 where the equality follows from $\sigma=\bar{k} L_H$, the first inequality follows from Lemma \ref{lemma:small3}, and the last inequality follows from the inequality $(a+b+c)^3\leq 9(a^3+b^3+c^3)$. Since $9(\bar{k})^{3/2}>2$, we have 
 \begin{align*} \small
     \mu (\text{Exp}_{\bx^s_t}(\tilde{\bh}^s_t)) &=\max \{\norm{\text{grad} F(\text{Exp}_{\bx^s_t}(\tilde{\bh}^s_t))}^{3/2}, -L_H^{-3/2}\lambda^3_{min}(\text{Hess}F(\text{Exp}_{\bx^s_t}(\tilde{\bh}^s_t))\}\\
     &\leq 9(\bar{k})^{3/2}[\frac{27(\sigma)^{3/2}}{8}\norm{\tilde{\bh}^s_t}^3+\norm{\text{grad} F(\bx^s_t)-\bv^s_t}^{3/2}
     +(\sigma)^{-3/2}\norm{\text{Hess}F(\bx^s_t)-\bU^s_t}^3_{op}+(\sigma)^{1/2}\delta],
 \end{align*}
which completes the proof. 
 \end{proof}

Theorem~\ref{main2} below provides the convergence rate of the R-SVRC algorithm when the cubic regularized Newton subproblem is solved \emph{inexactly}.
\begin{theorem}\label{main2}
Suppose that the cubic regularization parameter $\sigma$ in Algorithm~\ref{alg:svr_cubic} is fixed and satisfies $\sigma=\bar{k} L_H$, where $L_H$ is the Hessian Lipschitz parameter according to \eqref{lc0} and $\bar{k}\geq 2$ and it also satisfies~\eqref{cond:sigma2}. At each iteration, let the cubic subproblem \eqref{m_cubic} be solved inexactly so that the results $\{\tilde{\bh}^s_t\}$ are $\delta$-inexact solutions. Furthermore, suppose that the batch sizes $b_g$ and $b_h$ satisfy 
\begin{equation}\label{condition:batch_sizes}
    b_g \geq \frac{3000^{4/3}T^4 (\sqrt{\xi-1}+1)^4}{\bar{k}^2}, \quad b_h \geq \frac{e\log d}{(\sqrt{\frac{\bar{k}}{193T(\sqrt{\xi-1}+1)}+\frac{1}{8}}-\frac{1}{2\sqrt{2}})^2}, 
\end{equation}
where $T\geq 2$ is the length of the inner loop of the algorithm and $d=mn$ is the dimension of the problem. Then, under Assumptions~\ref{assumption:smooth}-\ref{assumption:curvature}, the output of the algorithm satisfies 
\begin{equation}
    \mE [\mu(\bx_{out})]\leq \frac{729 \bar{k}^2 L_H^{1/2}\Delta_F}{S T}+738\bar{k}^2 \sqrt{L_H}\delta,
\end{equation}
where $\mu(\bx)$ is defined in \eqref{mux}.
\end{theorem}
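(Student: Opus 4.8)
The plan is to reproduce the proof of Theorem~\ref{main1} essentially line by line, replacing Lemma~\ref{lemma:optimal} and Lemma~\ref{lemma:mu_after_two_small} by their inexact counterparts Lemma~\ref{lemma2:optimal} and Lemma~\ref{lemma2:mu_after_two_small}, and carrying the additive $\delta$ terms through the telescoping. First I would derive the one-step decrease of $F$ along $\bx^s_{t+1}=\text{Exp}_{\bx^s_t}(\tilde{\bh}^s_t)$: start from the cubic majorization of Lemma~\ref{lemma:hessian}, add and subtract $\bv^s_t$, $\bU^s_t$ and the regularizer $\tfrac{\sigma}{6}\norm{\tilde{\bh}^s_t}^3$ so that $m^s_t(\tilde{\bh}^s_t)$ appears, bound the two mismatch inner products with Lemma~\ref{lemma:innerproduct} (taking $M=\sigma$), and invoke \eqref{optimal2:1} in place of \eqref{optimal:3}. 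Since $m^s_t(\tilde{\bh}^s_t)\le-\tfrac{\sigma}{12}\norm{\tilde{\bh}^s_t}^3+\delta$, using $\sigma\ge 2L_H$ this gives
\begin{equation*}
F(\bx^s_{t+1})\le F(\bx^s_t)+\tfrac{2}{\sqrt{\sigma}}\norm{\text{grad}F(\bx^s_t)-\bv^s_t}^{3/2}+\tfrac{27}{2\sigma^2}\norm{\text{Hess}F(\bx^s_t)-\bU^s_t}_{op}^3-\tfrac{\sigma}{12}\norm{\tilde{\bh}^s_t}^3+\delta,
\end{equation*}
which is exactly \eqref{eq:last_main_1} up to the extra $+\delta$.

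Next I would keep the Lyapunov sequence $R^s_t=\mE[F(\bx^s_t)+c_t\norm{\text{Exp}^{-1}_{\hat{\bx}^s}(\bx^s_t)}^3]$ with $c_t$ from Lemma~\ref{lemma:constant_c_t}, and expand $R^s_{t+1}+\mE[\mu(\bx^s_{t+1})/(729\bar{k}^2\sqrt{L_H})]$ using (i) the decrease above; (ii) Lemma~\ref{lemma:tri} applied with $\bh=\tilde{\bh}^s_t$, which is admissible because Lemma~\ref{lemma:bound_stepsize_via_sigma2} together with \eqref{cond:sigma2} forces $\norm{\tilde{\bh}^s_t}<\text{inj}(\cM)$ (see Remark~\ref{remark:control_stepsize}); and (iii) Lemma~\ref{lemma2:mu_after_two_small} to bound $\mu(\bx^s_{t+1})$. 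With $\sigma=\bar{k}L_H$, the normalization $729\bar{k}^2\sqrt{L_H}$ is chosen precisely so that the leading $\norm{\tilde{\bh}^s_t}^3$ term of Lemma~\ref{lemma2:mu_after_two_small}, which carries the factor $\tfrac{27}{8}\sigma^{3/2}$ rather than $\sigma^{3/2}$ as in the exact case, contributes exactly $\tfrac{\sigma}{24}\norm{\tilde{\bh}^s_t}^3$; the error terms then contribute $\tfrac{1}{81\sqrt{\sigma}}\norm{\text{grad}F(\bx^s_t)-\bv^s_t}^{3/2}$ and $\tfrac{1}{81\sigma^2}\norm{\text{Hess}F(\bx^s_t)-\bU^s_t}_{op}^3$, while the residual $\sqrt{\sigma}\,\delta$ becomes $\tfrac{1}{81}\delta$. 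Collecting terms, the $\norm{\text{grad}F-\bv}^{3/2}$ coefficient stays $\le\tfrac{3}{\sqrt{\sigma}}$, the $\norm{\text{Hess}F-\bU}_{op}^3$ coefficient stays $\le\tfrac{14}{\sigma^2}$, the net $\norm{\tilde{\bh}^s_t}^3$ coefficient equals $-\tfrac{\sigma}{24}+2c_{t+1}(\sqrt{\zeta-1}+1)^3T^2\le 0$ by Lemma~\ref{lemma:constant_c_t}, and the constant offset is $(1+\tfrac{1}{81})\delta$. Exactly as in Theorem~\ref{main1}, the batch-size conditions in the hypothesis together with Lemma~\ref{expectation1} let me absorb the two expected mismatch terms into $\tfrac{\sigma}{500T^3(\sqrt{\zeta-1}+1)^3}\mE\norm{\text{Exp}^{-1}_{\hat{\bx}^s}(\bx^s_t)}^3$, and the definition of $c_t$ collapses the right side to $R^s_t+\tfrac{82}{81}\delta$.

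Finally I would telescope the contraction $R^s_{t+1}+\mE[\mu(\bx^s_{t+1})/(729\bar{k}^2\sqrt{L_H})]\le R^s_t+\tfrac{82}{81}\delta$ over $t=0,\dots,T-1$ (using $c_T=0$ and $\bx^s_T=\hat{\bx}^{s+1}$, so $R^s_0=\mE F(\hat{\bx}^s)$ and $R^s_T=\mE F(\hat{\bx}^{s+1})$) and then over $s=1,\dots,S$, and divide by the $ST$ candidate iterates over which $\bx_{out}$ is drawn uniformly, to obtain
\begin{equation*}
\mE[\mu(\bx_{out})]\le\frac{729\bar{k}^2\sqrt{L_H}\,\Delta_F}{ST}+\frac{729\cdot 82}{81}\,\bar{k}^2\sqrt{L_H}\,\delta=\frac{729\bar{k}^2 L_H^{1/2}\Delta_F}{ST}+738\,\bar{k}^2\sqrt{L_H}\,\delta.
\end{equation*}

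The main obstacle I anticipate is not any new estimate — curvature, step-size control, and the matrix-concentration/Lyapunov machinery are all already packaged in Lemmas~\ref{lemma:tri}, \ref{lemma:bound_stepsize_via_sigma2}, \ref{expectation1} and \ref{lemma2:mu_after_two_small} — but rather the careful bookkeeping of constants. One must verify that $729\bar{k}^2\sqrt{L_H}$ is exactly the normalization that (a) fits the $\tfrac{27}{8}\sigma^{3/2}$ term from Lemma~\ref{lemma2:mu_after_two_small} into the $\tfrac{\sigma}{12}$ budget freed by the descent step while leaving the $\tfrac{\sigma}{24}$ slack demanded by Lemma~\ref{lemma:constant_c_t}, (b) makes the residual error coefficients small enough that the very same batch sizes as in Theorem~\ref{main1} still suffice, and (c) combines the two additive $\delta$ contributions — one from \eqref{optimal2:1} and one from the $\sqrt{\sigma}\,\delta$ term of Lemma~\ref{lemma2:mu_after_two_small} — into the stated $738\,\bar{k}^2\sqrt{L_H}\,\delta$.
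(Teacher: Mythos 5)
Your proposal is correct and follows essentially the same route as the paper's proof: the same one-step descent bound with the extra $+\delta$ from \eqref{optimal2:1}, the same Lyapunov sequence $R^s_t$ with the $c_t$ from Lemma~\ref{lemma:constant_c_t}, the same normalization $729\bar{k}^2\sqrt{L_H}$ absorbing the $\tfrac{27}{8}\sigma^{3/2}$ factor of Lemma~\ref{lemma2:mu_after_two_small} into a $\tfrac{\sigma}{24}$ coefficient, and the same telescoping yielding the $\tfrac{82}{81}\cdot 729=738$ constant. All the constant bookkeeping you flag as the main risk checks out against the paper's argument.
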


\begin{proof}
First, we upper bound $F(\bx_{t+1}^s)$ as follows:
\begin{align*}
    F(\bx_{t+1}^s)&\leq F(\bx_t^s)+\fprod{\text{grad} F(\bx_t^s),\tilde{\bh}^s_t}+\frac{1}{2}\fprod{H_{\bx_t^s}[\tilde{\bh}^s_t],\tilde{\bh}^s_t}+\frac{L_H}{6}\norm{\tilde{\bh}^s_t}^3\\
    &=F(\bx_t^s)+\fprod{\text{grad} F(\bx_t^s)-\bv_t^s,\tilde{\bh}^s_t}+\frac{1}{2}\fprod{(H_{\bx_t^s}-\bU_t^s)[\tilde{\bh}^s_t],\tilde{\bh}^s_t}-\frac{\sigma-L_H}{6}\norm{\tilde{\bh}^s_t}^3\\
    &+\fprod{\bv^s_t,\tilde{\bh}^s_t}+\frac{1}{2}\fprod{\bU^s_t[\tilde{\bh}^s_t],\tilde{\bh}^s_t}+\frac{\sigma}{6}\norm{\tilde{\bh}^s_t}^3\\
    &\leq F(\bx_t^s)+(\frac{\sigma}{27}\norm{\tilde{\bh}^s_t}^3+\frac{2}{\sigma^{1/2}}\norm{\text{grad} F(\bx_t^s)-\bv_t^s}^{3/2})+\frac{1}{2}(\frac{2\sigma}{27}\norm{\tilde{\bh}^s_t}^3+\frac{27}{\sigma^2}\norm{H_{\bx_t^s}-\bU_t^s}_{op}^3)\\
    &-\frac{\sigma-L_H}{6}\norm{\tilde{\bh}^s_t}^3-\frac{\sigma}{12}\norm{\tilde{\bh}^s_t}^3+\delta\\
    &\leq F(\bx^s_t)+\frac{2}{\sigma^{1/2}}\norm{\text{grad} F(\bx_t^s)-\bv_t^s}^{3/2}+\frac{27}{2\sigma^2}\norm{H_{\bx^s_t}-\bU^s_t}_{op}^3-\frac{\sigma}{12}\norm{\tilde{\bh}^s_t}^3+\delta,
\end{align*}
where the first inequality follows from H-smooth assumption and Lemma~\ref{lemma:hessian}, and the second inequality holds due to Lemma~\ref{lemma:innerproduct} and \eqref{optimal2:1} in Lemma~\ref{lemma2:optimal}.

Next, we define 
\begin{equation}\label{proof:1}
R^s_t\triangleq\mE[F(\bx^s_t)+c_t\norm{\text{Exp}^{-1}_{\hat{\bx}^s}(\bx^s_t)}^3],
\end{equation}
where $c_t$ is defined in Lemma~\ref{lemma:constant_c_t}. By Lemma~\ref{lemma:tri}, for $T\geq 2$, we have 
\begin{equation}\label{proof:2}
    c_{t+1}\norm{\text{Exp}^{-1}_{\hat{\bx}^s}(\text{Exp}_{\bx^s_t}(\tilde{\bh}^s_t))}^3 \leq 2c_{t+1}(\sqrt{\xi-1}+1)^3 T^2\norm{\tilde{\bh}^s_t}^3+c_{t+1}(1+\frac{3}{T})\norm{\text{Exp}^{-1}_{\hat{\bx}^s}(\bx^s_t)}^3.
\end{equation}
Furthermore, from Lemma~\ref{lemma2:mu_after_two_small}, we have
\begin{equation}\label{proof:3}
    \frac{\mu(\bx^s_{t+1})}{729 \bar{k}^2 \sqrt{L_H}}\leq \frac{\sigma}{24}\norm{\tilde{\bh}^s_t}^3+\frac{\norm{\text{grad} F(\bx^s_t)-\bv^s_t}^{3/2}}{81\sqrt{\sigma}}+\frac{\norm{\text{Hess} F(\bx^s_t)-\bU^s_t}^{3}}{81\sigma^2}+\frac{\delta}{81}.
\end{equation}
Combining \eqref{proof:1}, \eqref{proof:2} and \eqref{proof:3}, we have 
\small
\begin{align*}
    &R^s_{t+1}+\mE [\frac{\mu(\bx^s_{t+1})}{729 \bar{k}^2 \sqrt{L_H}}]\\
    &=\mE[F(\bx^s_{t+1})+c_{t+1}\norm{\text{Exp}^{-1}_{\hat{\bx}^s}(\bx^s_{t+1})}^3+\frac{\mu(\bx^s_{t+1})}{729 \bar{k}^2 \sqrt{L_H}}]\\
    &\leq \mE[F(\bx^s_t)+\frac{3}{\sqrt{\sigma}}\norm{\text{grad} F(\bx^s_t)-\bv^s_t}^{3/2}+\frac{14}{\sigma^2}\norm{\text{Hess} F(\bx^s_t)-\bU^s_t}^{3}]\\
    &+\mE[c_{t+1}(1+\frac{3}{T})\norm{\text{Exp}^{-1}_{\hat{\bx}^s}(\bx^s_t)}^3-(\frac{\sigma}{24}-2c_{t+1}(\sqrt{\xi-1}+1)^3 T^2)\norm{\tilde{\bh}^s_t}^3]+\frac{82\delta}{81}\\
    &\leq \mE[F(\bx^s_t)+\frac{3}{\sqrt{\sigma}}\norm{\text{grad} F(\bx^s_t)-\bv^s_t}^{3/2}+\frac{14}{\sigma^2}\norm{\text{Hess} F(\bx^s_t)-\bU^s_t}^{3}_{op}
    +c_{t+1}(1+\frac{3}{T})\norm{\text{Exp}^{-1}_{\hat{\bx}^s}(\bx^s_t)}^3]+\frac{82\delta}{81},
\end{align*}
\normalsize
where the last inequality follows from Lemma~\ref{lemma:constant_c_t}.

Based on Lemma~\ref{expectation1} and the conditions on the sizes of $b_g$ and $b_h$, we have
\small
\begin{align}
    \frac{3}{\sqrt{\sigma}}\mE\norm{\text{grad} F(\bx^s_t)-\bv^s_t}^{3/2}\leq \frac{3L_H^{3/2}}{\sqrt{\sigma}b_g^{3/4}}\mE \norm{\text{Exp}^{-1}_{\hat{\bx}^s}(\bx^s_t)}^3\leq \frac{\sigma}{1000T^3 (\sqrt{\xi-1}+1)^3} \mE \norm{\text{Exp}^{-1}_{\hat{\bx}^s}(\bx^s_t)}^3,\\
    \frac{14}{\sigma^2}\mE\norm{\text{Hess} F(\bx^s_t)-\bU^s_t}^{3}_\leq \frac{896 L_H^3(\rho+\rho^2)^3}{\sigma^2} \mE\norm{\text{Exp}^{-1}_{\hat{\bx}^s}(\bx^s_t)}^3 \leq \frac{\sigma}{1000T^3 (\sqrt{\xi-1}+1)^3} \mE \norm{\text{Exp}^{-1}_{\hat{\bx}^s}(\bx^s_t)}^3,
\end{align}
\normalsize
where $\rho=\sqrt{\frac{2e\log mn}{b_h}}$.
Therefore, we have 
\small
\begin{align*}
    R^s_{t+1}+\mE [\frac{\mu(\bx^s_{t+1})}{729 \bar{k}^2 \sqrt{L_H}}]&\leq \mE[F(\bx^s_t)+\norm{\text{Exp}^{-1}_{\hat{\bx}^s_t}(\bx^s_t)}^3(c_{t+1}(1+3/T)+\frac{\sigma}{500T^3(\sqrt{\xi-1}+1)^3})]+\frac{82\delta}{81}\\
    &=\mE[F(\bx^s_t)+c_t\norm{\text{Exp}^{-1}_{\hat{\bx}^s_t}(\bx^s_t)}^3]+\frac{82\delta}{81}
    =R^s_t+\frac{82\delta}{81},
\end{align*}
\normalsize
where the first equality is due to the choice of $\{c_t\}$ defined in Lemma~\ref{lemma:constant_c_t}. Telescoping the above inequality from $t=0$ to $T-1$, we have 
$
    R_0^s-R_T^s\geq (729\bar{k}^2 \sqrt{L_H})^{-1}\sum_{t=1}^T (\mE[\mu(\bx^s_t)]-\frac{82\delta}{81}).
$
Note that $c_T=0$ and $\bx^{s-1}_T=\bx^s_0=\hat{\bx}^s$, then $R_T^s=\mE[F(\bx^s_T)+c_T\norm{\text{Exp}^{-1}_{\hat{\bx}^s_t}(\bx^s_T)}^3]=\mE F(\hat{\bx}^{s+1})$ and $R_0^s=\mE[F(\bx^s_0)+c_0\norm{\text{Exp}^{-1}_{\hat{\bx}^s}(\bx^s_0)}^3]=\mE F(\hat{\bx}^s)$, which implies 
\begin{equation*}
    \mE F(\hat{\bx}^s)-\mE F(\hat{\bx}^{s+1})=R_0^s-R_T^s\geq (729 \bar{k}^2 \sqrt{L_H})^{-1}\sum_{t=1}^T (\mE[\mu(\bx^s_t)]-\frac{82\delta}{81}).
\end{equation*}
Telescoping the above inequality from $s=1$ to $S$ yields 
\begin{equation*}
    \Delta_F \geq \sum^S_{s=1} \mE F(\hat{\bx}^s)-\mE F(\hat{\bx}^{s+1})\geq (729\bar{k}^2 \sqrt{L_H})^{-1}\sum_{s=1}^S \sum_{t=1}^T (\mE[\mu(\bx^s_t)]-\frac{82\delta}{81}).
\end{equation*}
By the definition of $\bx_{out}$, the proof is completed. 
\end{proof}

\begin{corollary}\label{cor:inexact}
For any $s$ and $t$, let $\tilde{\bh}^s_t$ be an inexact solution of the cubic subproblem $m^s_t(\bh)$, which satisfies Definition~\ref{def:inexact_sol} with $\delta=(1500\bar{k}^2\sqrt{L_H})^{-1}\epsilon^{3/2}$. Suppose that the cubic regularization parameter $\sigma$ in Algorithm~\ref{alg:svr_cubic} is fixed and satisfies $\sigma=\bar{k} L_H$, where $L_H$ is the Hessian Lipschitz parameter according to \eqref{lc0} with $\bar{k}\geq 2$, and it also satisfies~\eqref{cond:sigma2}. Let the epoch length $T=N^{1/5}$, batch sizes $b_g=\frac{3000^{4/3}N^{4/5} (\sqrt{\zeta-1}+1)^4}{\bar{k}^2}$, $b_h=\frac{e\log d}{(\sqrt{\frac{\bar{k}}{193N^{1/5}(\sqrt{\zeta-1}+1)}+\frac{1}{8}}-\frac{1}{2\sqrt{2}})^2}$, and the number of epochs $S=\max\{1,1500 \bar{k}^2 L_H^{1/2}\Delta_F N^{-1/5}\epsilon^{-3/2}\}$. Then, under Assumptions~\ref{assumption:smooth}-\ref{assumption:curvature}, Algorithm~\ref{alg:svr_cubic} finds an $(\epsilon, \sqrt{L_H\epsilon})$-second-order stationary point in $\tilde{O}(N+L_H^{1/2}\Delta_F N^{4/5}\epsilon^{-3/2})$ number of second-order oracle calls.	
\end{corollary}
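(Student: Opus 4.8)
The plan is to specialize Theorem~\ref{main2} in exactly the way Corollary~\ref{corollary1} specializes Theorem~\ref{main1}, the only new ingredient being that the additive term $738\bar{k}^2\sqrt{L_H}\delta$ must be absorbed into the target accuracy through the prescribed choice of $\delta$. First I would check that the stated parameters meet the hypotheses of Theorem~\ref{main2}: with $T=N^{1/5}$ one has $T\geq 2$ whenever $N\geq 32$ (for smaller $N$ the finite sum is tiny and a deterministic solver is used); the batch sizes $b_g=3000^{4/3}N^{4/5}(\sqrt{\zeta-1}+1)^4/\bar{k}^2$ and $b_h$ are precisely the right-hand sides of~\eqref{condition:batch_sizes} evaluated at $T=N^{1/5}$; and $\sigma=\bar{k}L_H$ with $\bar{k}\geq 2$ is assumed to also satisfy~\eqref{cond:sigma2}. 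None of this requires work beyond substitution; the one point worth a remark is that $b_h$ is well defined and of order $\tilde{O}(N^{2/5})$, because $\bar{k}/(193N^{1/5}(\sqrt{\zeta-1}+1))>0$ keeps $\sqrt{\,\cdot+1/8\,}-1/(2\sqrt{2})$ strictly positive.

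Next I would invoke Theorem~\ref{main2} to obtain $\mE[\mu(\bx_{out})]\leq 729\bar{k}^2 L_H^{1/2}\Delta_F/(ST)+738\bar{k}^2\sqrt{L_H}\delta$. Substituting $\delta=(1500\bar{k}^2\sqrt{L_H})^{-1}\epsilon^{3/2}$ bounds the second term by $(738/1500)\epsilon^{3/2}$. Substituting $S\geq 1500\bar{k}^2 L_H^{1/2}\Delta_F N^{-1/5}\epsilon^{-3/2}$ together with $T=N^{1/5}$ gives $ST\geq 1500\bar{k}^2 L_H^{1/2}\Delta_F\epsilon^{-3/2}$, so the first term is at most $(729/1500)\epsilon^{3/2}$. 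Adding the two contributions, $\mE[\mu(\bx_{out})]\leq (1467/1500)\epsilon^{3/2}<\epsilon^{3/2}$. By the equivalence recorded just below~\eqref{mux} — namely that $\mu(\bx)\leq\epsilon^{3/2}$ is the same as the pair of bounds in~\eqref{mux_expand} — this certifies that $\bx_{out}$ is an $(\epsilon,\sqrt{L_H\epsilon})$-second-order stationary point in expectation.

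Finally I would count second-order oracle calls exactly as in the proof of Corollary~\ref{corollary1}. Each of the $S$ epochs costs $N$ SO calls for the full $\nabla F$ and $\nabla^2 F$, plus $T$ inner iterations each costing $b_g+b_h$ SO calls, so the total is $S\bigl(N+T(b_g+b_h)\bigr)$. With the chosen values, $SN$ contributes $O(N)$ from the $S=1$ branch plus $O(L_H^{1/2}\Delta_F N^{4/5}\epsilon^{-3/2})$, while $ST(b_g+b_h)=O(L_H^{1/2}\Delta_F\epsilon^{-3/2})\cdot\tilde{O}(N^{4/5})$ since $b_g=O(N^{4/5})$ and $b_h=\tilde{O}(N^{2/5})$ carries the $\log d$ hidden inside $\tilde{O}$; the $S=1$ branch also contributes an additive $\tilde{O}(N)$. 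Collecting terms yields the stated bound $\tilde{O}(N+L_H^{1/2}\Delta_F N^{4/5}\epsilon^{-3/2})$.

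I do not anticipate a genuine obstacle: the corollary is a mechanical specialization of Theorem~\ref{main2}. The only care needed is arithmetic bookkeeping — verifying that the optimization-error term and the inexactness term each stay below roughly $\tfrac12\epsilon^{3/2}$ with the given numeric constants, and confirming that the substitution $T=N^{1/5}$ into the batch-size conditions keeps $b_g$ and $b_h$ at the orders $N^{4/5}$ and $\tilde{O}(N^{2/5})$ needed for the final $\tilde{O}(N+L_H^{1/2}\Delta_F N^{4/5}\epsilon^{-3/2})$ estimate rather than something larger.
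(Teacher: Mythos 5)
Your proposal is correct and follows essentially the same route as the paper's proof: invoke Theorem~\ref{main2}, use the prescribed $\delta$ and $ST$ to bound each of the two terms by roughly $\tfrac12\epsilon^{3/2}$, and count the oracle calls as $SN+ST(b_g+b_h)$ exactly as in Corollary~\ref{corollary1}. Your bookkeeping of the constants ($729/1500$ and $738/1500$) is in fact slightly sharper than the paper's, which simply bounds both contributions by $\epsilon^{3/2}/2$.
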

\begin{proof}
	Under the parameter setting in Corollary~\ref{cor:inexact} and Theorem~\ref{main2}, we have
	\begin{equation}
		\mE[\mu(\bx_{out})]\leq \frac{729 \bar{k}^2 L_H^{1/2}\Delta_F}{S T}+738\bar{k}^2 \sqrt{L_H}\delta\leq\frac{\epsilon^{3/2}}{2}+\frac{\epsilon^{3/2}}{2}=\epsilon^{3/2}.
	\end{equation}
	Thus, $\bx_{out}$ is an $(\epsilon, \sqrt{L_H\epsilon})$-approximate local minimum. Similar to the discussion in Corollary~\ref{corollary1}, the total number of SO calls is 
	\begin{align*}
	SN+(ST)(b_g+b_h)&\leq N+1500 \bar{k}^2 L_H^{1/2}\Delta_F N^{4/5}\epsilon^{-3/2}
	+1500 \bar{k}^2 L_H^{1/2}\Delta_F \epsilon^{-3/2}(b_g+b_h)\\
	&=\tilde{O}(N+L_H^{1/2}\Delta_F N^{4/5}\epsilon^{-3/2}).
	\end{align*}
\end{proof}

\section{Numerical Studies}
In this section, we conduct numerical experiments to verify our theoretical complexity results for the R-SVRC algorithm to find a second-order stationary point. Besides different simulation studies, we compare our algorithm with crude Riemannian cubic regularization Newton method (CRC), Riemannian adaptive cubic regularization method (ARC), and Riemannian trust region method (RTR) -- see~\cite{zhang2018cubic,agarwal2020adaptive,absil2007trust}. Our code is written in conformance with the Manopt package~\cite{manopt}, and it is available at \href{https://github.com/samdavanloo/R-SVRC}{https://github.com/samdavanloo/R-SVRC}. All the numerical studies are run on a laptop with 1.4 GHz Quad-Core Intel Core i5 CPU and 8 GB memory.

\subsection{Parameter Estimation of Multivariate Student's t-distribution}
The maximum likelihood estimation of the (scale) parameter of the multivariate t-distribution~\eqref{student_t} requires solving
\begin{equation}\label{numerical:student_distribution}
	\min_{X\in\cS^p_{++}} F(X)=\frac{\nu+p}{2N}\sum_{i=1}^N \log(1+\frac{\ba_i^T X\ba_i}{\nu})-\frac{1}{2}\log\det(X),
\end{equation}
where the mean is assumed to be zero and $X$ is the inverse of the scale matrix $\Sigma$ which should belong to the Symmetric Positive Definite (SPD) manifold. The Euclidean gradient and Hessian of $F$ can be calculated as
\begin{align}
	\nabla F(X)&=\frac{\nu+p}{2N}\sum_{i=1}^N \frac{\ba_i\ba_i^T}{\nu+\ba_i^T X\ba_i}-\frac{1}{2}X^{-1},\label{numerical:gradient}\\ 
	\nabla^2 F(X)[U]&=\frac{\nu+p}{2N}\sum_{i=1}^N\frac{-\ba_i^T U \ba_i}{(\nu+\ba_i^T X\ba_i)^2}\ba_i\ba_i^T+\frac{1}{2}X^{-1}UX^{-1}\\
	&=[\frac{\nu+p}{2N}\sum_{i=1}^N\frac{-(\ba_i\ba_i^T)\otimes(\ba_i\ba_i^T)}{(\nu+\ba_i^T X\ba_i)^2}+\frac{1}{2}X^{-1}\otimes X^{-1}]\cdot \text{vec}(U),\label{numerical:tensor}
\end{align}
where $\text{sym}(Y)\triangleq\frac{1}{2}(Y+Y^\top)$, $\text{vec}(\cdot)$ denotes vectorization of the input matrix, and $\otimes$ denotes the Kronecker product.
The Riemannian gradient and Hessian are obtained as (see~\cite{bhatia2009positive,manopt}):
\begin{align}
	\text{grad} F(X)&=X\text{sym}(\nabla F(X))X,\\
	\text{Hess} F(X)[U]&=X\text{sym}(\nabla^2 F(X)[U])X+\text{sym}(U\nabla F(X)X).
\end{align}
While the above equations compute the full gradient and Hessian along certain direction, the stochastic gradient and Hessian along certain direction also easily follow. For instance, for function $$F_{I_h}(X)\triangleq\frac{1}{b_h}\sum_{i\in I_h}f_{i}(X),$$ the second term in the formula for $\bU^s_t$ (see Step 9 in Algorithm~\ref{alg:svr_cubic}) can be calculated as
\begin{align}
	\nabla^2 F_{I_h}(X)[U]&=[\frac{\nu+p}{2b_h}\sum_{i\in I_h}\frac{-(\ba_i\ba_i^T)\otimes(\ba_i\ba_i^T)}{(\nu+\ba_i^T X\ba_i)^2}+\frac{1}{2}X^{-1}\otimes X^{-1}]\cdot \text{vec}(U),\\
	\text{Hess} F_{I_h}(X)[U]&=X\text{sym}(\nabla^2 F_{I_h}(X)[U])X+\text{sym}(U\nabla F_{I_h}(X)X).
\end{align}
While computing the Euclidean gradient and Hessian (along certain direction) using \eqref{numerical:gradient} and~\eqref{numerical:tensor} requires processing $N$ data points, transforming them to their Riemannian counterparts is relatively simple, in the sense that their computation is independent of $N$. Therefore, at the beginning of each epoch, the tensor inside the square bracket in~\eqref{numerical:tensor} is computed and stored. In the following within-epoch iterations, to update $\bU^s_t$ (Step 9 in Algorithm~\ref{alg:svr_cubic}), only the second and third terms need to be updated which can be performed efficiently as the batch size is small compared to $N$.

\subsection{Linear Classifier Over the Sphere Manifold}
In this example, we consider a classification problem based on $N$ training examples $\{\ba_i,b_i\}_{i=1}^N$ where $\ba_i\in\mR^m$ and $b_i\in\{-1,1\}$ for all $i\in[N]$. We aim to estimate the model parameter $\bx$ for a linear classifier $f(\ba)=\bx^{\top}\ba$ such that 
it minimizes a smooth nonconvex loss function~\cite{zhao2010convex,li2003loss}
\begin{equation}\label{numerical:sphere}
	\cL(\bx; \{(\ba_i, b_i)\}_{i=1}^N)=\sum_{i=1}^N(1-\frac{1}{1+e^{-b_i\cdot \bx^\top \ba_i}})^2,
\end{equation}
over the Sphere manifold, $\{\bx \in\mR^m: \bx^\top \bx=1\}$~\cite{absil2009optimization}. The Euclidean gradient and Hessian of $\cL$ are 
\begin{equation*}
	\nabla \cL(\bx)=\sum_{i=1}^N -\frac{e^{-2b_i(\bx^\top \ba_i)}}{(1+e^{-b_i(\bx^\top\ba_i)})^3}\ba_i,
\end{equation*}
\begin{equation*}
	\nabla^2 \cL(\bx)=\sum_{i=1}^N \frac{(2-e^{-b_i\bx^\top\ba_i})b_i^2e^{-2b_i(\bx^\top \ba_i)}}{(1+e^{-b_i(\bx^\top\ba_i)})^4}\ba_i\ba_i^\top. 
\end{equation*}
The Riemannian gradient and Hessian of $\cL$ along $U$ (see Proposition~5.3.2 in \cite{absil2009optimization}, \cite{manopt}) are
\begin{align}
	\text{grad}\cL(\bx)&=P_{\bx}(\nabla \cL(\bx)),\\
	\text{Hess}\cL(\bx)[\bu]&=P_{\bx}(\nabla^2 \cL(\bx)[\bu])-(\bx^\top\nabla \cL(\bx))\bu,
\end{align}
where the tangent space projection is $P_{\bx}(\by)\triangleq\by-(\bx^\top \by)\bx$. The stochastic gradient and Hessian easily follows by summing the corresponding terms over the minibatch.

The first example above on estimating the inverse scale matrix of the multivariate t-distribution over symmetric positive definite (SPD) satisfies Assumptions~\ref{assumption:embedded}-\ref{assumption:curvature}, and its objective function satisfies Assumptions~\ref{assumption:smooth}-\ref{assumption:bounded} if the minimum eigenvalue of the matrices is bounded away from zero. The second example on estimating the parameter of the linear classifier over sphere manifold satisfies all of the assumptions, i.e., the sphere manifold satisfies Assumptions~\ref{assumption:embedded}-\ref{assumption:curvature} and Assumptions~\ref{assumption:smooth}-\ref{assumption:bounded} follows from continuous differentiability of the objective function and compactness of the sphere manifold.

\subsection{Numerical Results} \label{sec:numerical}
\paragraph{Data Simulation.}
The first numerical study is to estimate the inverse scale (covariance) matrix of the multivariate Student's t-distribution (see problem~\eqref{numerical:student_distribution}). Data is simulated from a multivariate t-distribution with three degrees of freedom and randomly generate scale matrix $\Sigma_{\text{true}}\in\cS^d_{++}$ with $d=10$. $N=10^4$ samples are generated from the  underlying distribution which are then added with the Gaussian noise $\epsilon$ sampled from $\mathcal{N}(0,\tau^2\bI_d)$ with $\tau^2$ equal to 0.1, 1, 5, and 10.

The second numerical study is to estimate the parameter of a linear classifier over the Sphere manifold (see problem~\eqref{numerical:sphere}). To simulate the data, the true parameter $\bx_{\text{true}}$ is first generated from $\mathcal{N}(0,\bI_d)$ which is then normalized to belong to the Sphere manifold. Next, $\ba_i\in \mR^{d\times1},\ i=1,...,N$ are randomly generated from the uniform distribution where $d=20$ and $N=10^5$. The corresponding label $b_i$ to $\ba_i$ is set to $1$ if $\bx^\top \ba_i+\epsilon_i>0$, where $\epsilon_i\sim\mathcal{N}(0,\tau^2)$, and $-1$, otherwise, where $\tau^2$ is chosen to be 0.02, 0.1, 1, and 3.

The proposed R-SVRC algorithm is run 15 times in each numerical study. The shaded plots discussed in the Results below provide percentile information based on these replicates.

\paragraph{Number of calls to the stochastic oracle.} For the R-SVRC method, at the beginning of each epoch, the SO is called $N$ times. However, within each epoch, each iteration makes $(b_g+b_h)$ calls to SO. In the deterministic CRC, ARC and RTR methods, each iteration makes $N$ calls to SO. The number of SO calls and the CPU runtime are the two performance measures we have used to compare the proposed method with the other second-order methods.

\paragraph{Parameters and subproblem solver.}
The g-smoothness and H-smoothness assumptions is standard in nonasympototic analysis in Riemannian optimization - see, e.g.,~\cite{absil2004trust,absil2009optimization,boumal2019global,boumal2020introduction}. However, obtaining the g-smoothness and H-smoothness constants is not trivial and we defer it to future studies. In the following, we numerically analyze the effect of different parameters on the performance of Algorithm~\ref{alg:svr_cubic}, i.e., epoch size $T$, cubic regularization constant $\sigma$, batchsize $b_g$ and $b_h$. The cubic subproblem (Step 9 of the Algorithm~\ref{alg:svr_cubic}) is solved using the conjugated gradient method using the Manopt solver~\cite{manopt}.

To estimate the inverse scale matrix of the multivariate t-distribution over the symmetric positive definite manifold, the default optimization parameter setting for Algorithm~\ref{alg:svr_cubic} is $\sigma=0.01$, $b_g=b_h=500$ and $T=5$. To estimate the parameter of the linear classifier over Sphere manifold, the default optimization parameter setting for Algorithm~\ref{alg:svr_cubic} is $\sigma=0.1$, $b_g=b_h=5000$ and $T=5$.

\paragraph{Results.}
Figure~\ref{fig:different_noise_level} shows the performance of the R-SVRC algorithm for different levels of noise $\epsilon$ added to the simulated data (see data simulation above). The top two plots in Figure~\ref{fig:different_noise_level} show the proposed algorithm successfully approach a second-order stationary point in all scenarios. As the output of Algorithm~\ref{alg:svr_cubic} is to be sampled uniformly at random for $s\in[S]$ and $t\in[T]$, we also plot the averaged $\mu(\bx^k)$ sequence (over iterations) in the bottom two plots. These averaged sequences show $\mE(\mu(\bx^k))$ decreases with a sublinear rate which is consistent with the first main theorem.

\begin{figure}[H]
    \centering
        \caption{Effect of the added noise to the simulated data on the performance of the proposed R-SVRC algorithm over 15 replicates. \textbf{(Left)} Estimating inverse scale matrix of multivariate t-distribution over SPD manifold. \textbf{(Right)} Estimating parameter of the linear classifier over Sphere manifold. 
    }
    \label{fig:different_noise_level} 
    \begin{subfigure}[b]{0.49\textwidth}
        \centering
        \includegraphics[width=1.1\textwidth]{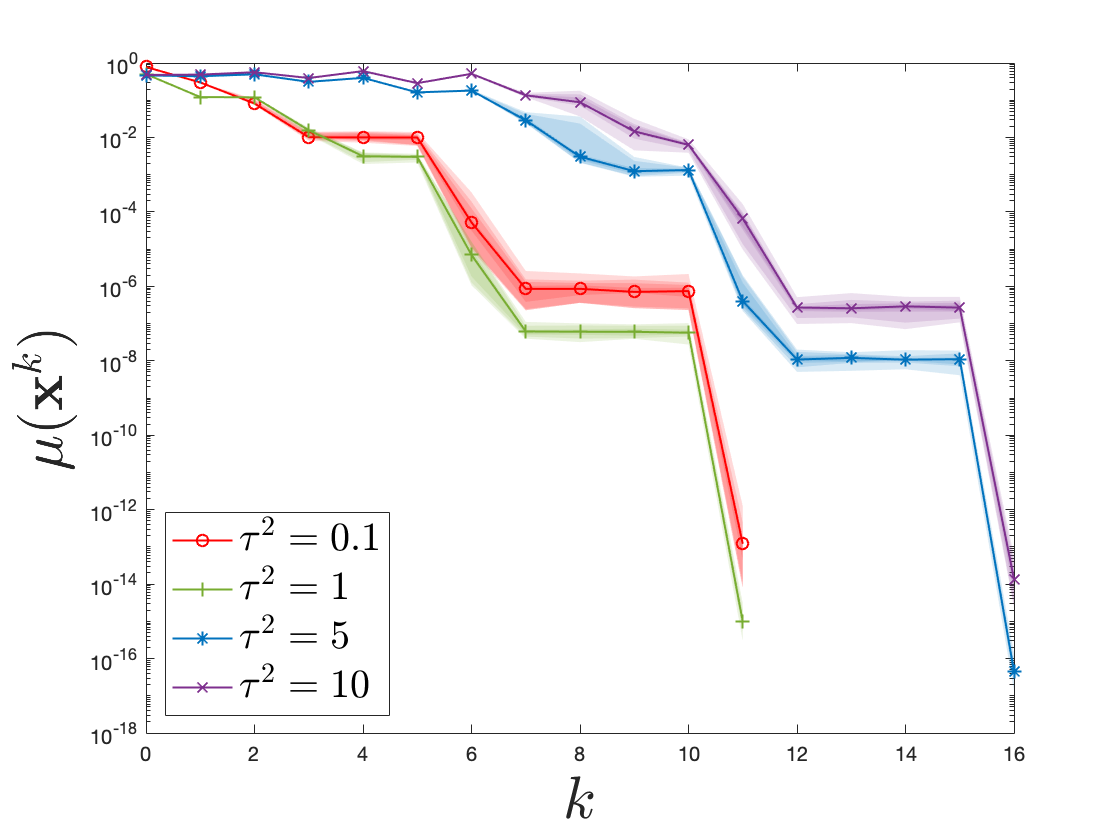}
        \caption*{}
    \end{subfigure}
    \begin{subfigure}[b]{0.49\textwidth}
        \centering
        \includegraphics[width=1.1\textwidth]{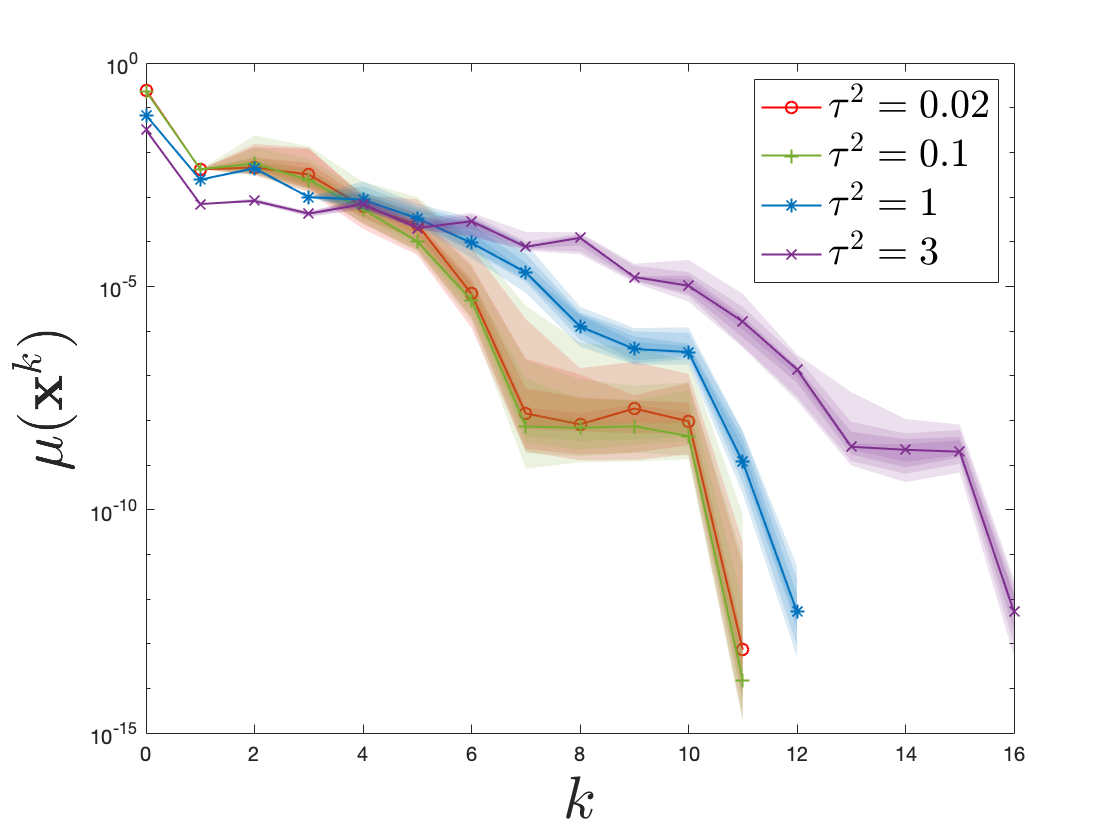}
         \caption*{}
    \end{subfigure}
    \begin{subfigure}[b]{0.49\textwidth}
        \centering
        \includegraphics[width=1.1\textwidth]{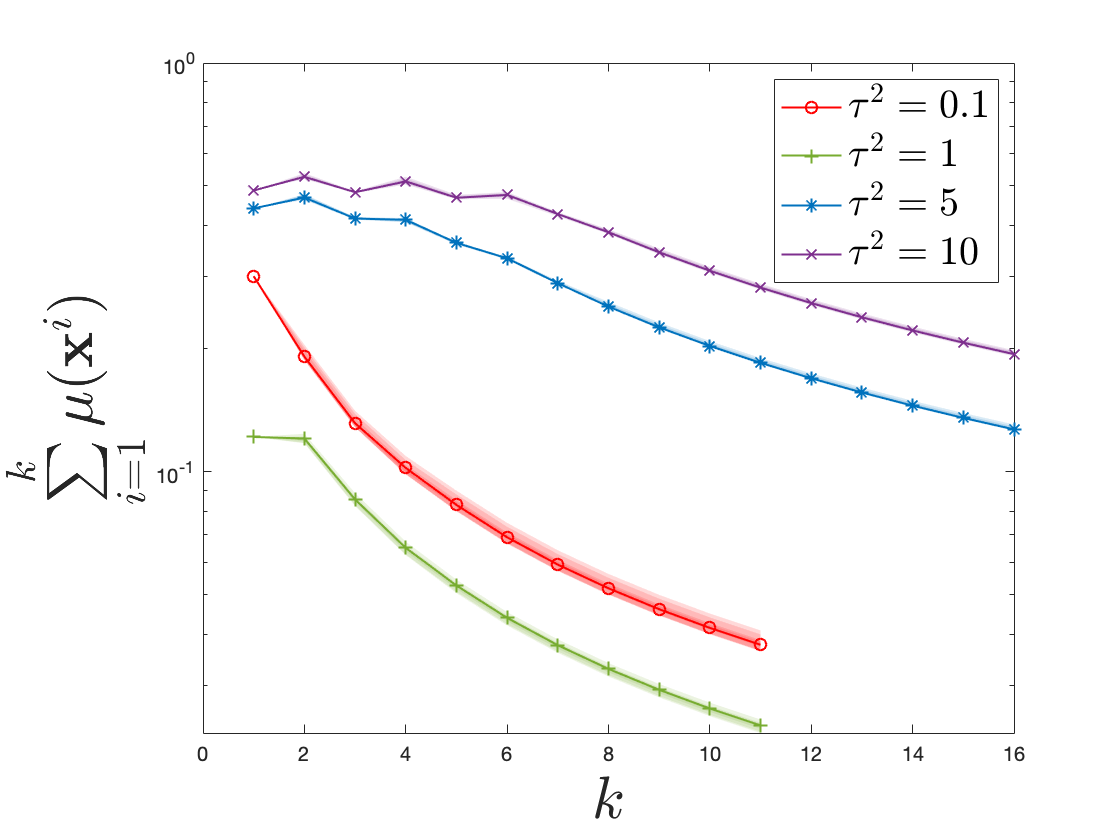}
         \caption*{}
     \end{subfigure}
    \begin{subfigure}[b]{0.49\textwidth}
        \centering
        \includegraphics[width=1.1\textwidth]{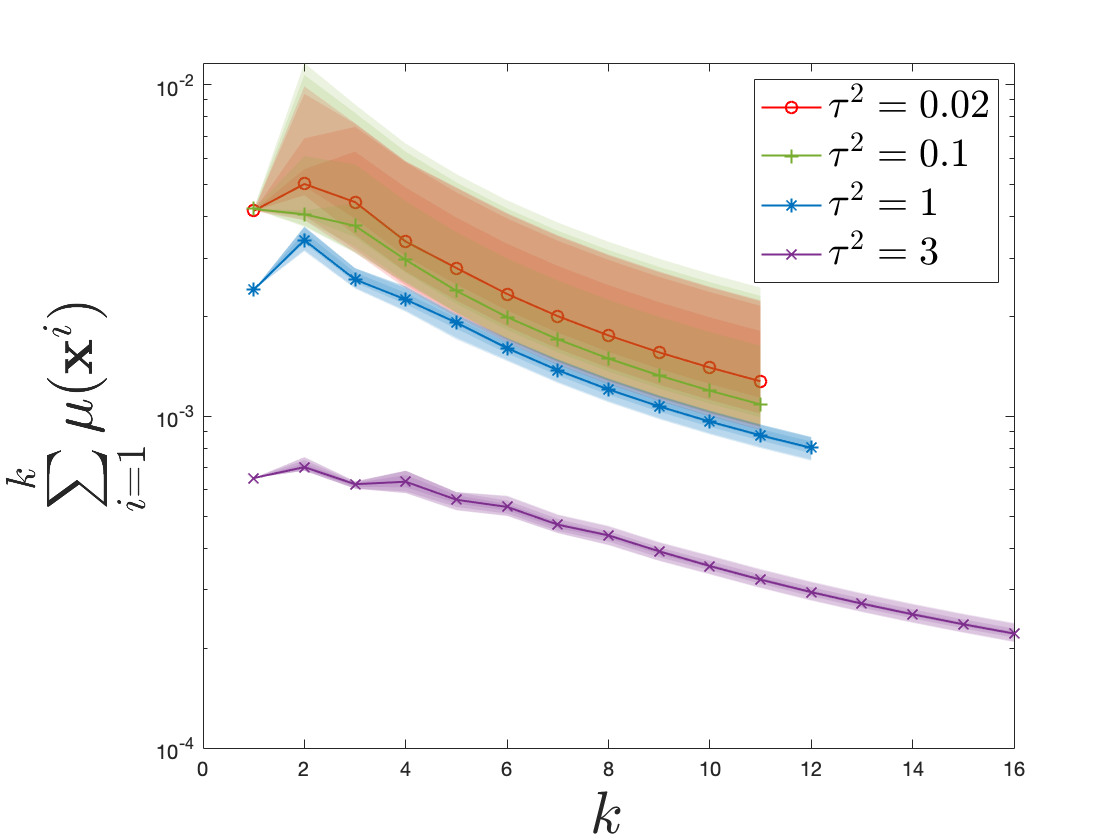}
         \caption*{}
    \end{subfigure}
\end{figure} 

\begin{figure}[H]
    \centering
        \caption{Performance of of the proposed R-SVRC algorithm for different optimization parameter settings over 15 replicates. \textbf{(Left)} Estimating inverse scale matrix of multivariate t-distribution over SPD manifold. \textbf{(Right)} Estimating parameter of the linear classifier over Sphere manifold.}
        \label{fig:different_parameter}
    \begin{subfigure}[b]{0.49\textwidth}
        \centering \includegraphics[width=1.0\textwidth]{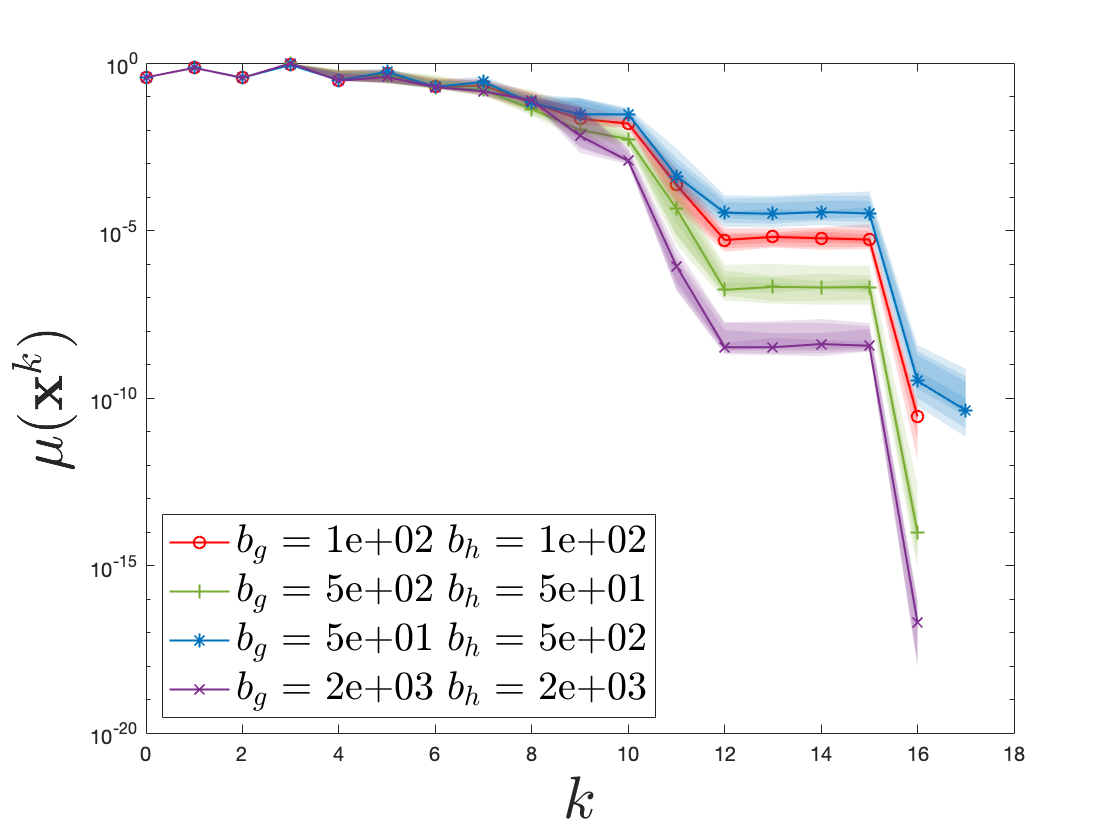}
        \caption*{}
    \end{subfigure}
    \begin{subfigure}[b]{0.49\textwidth}
        \centering
        \includegraphics[width=1.0\textwidth]{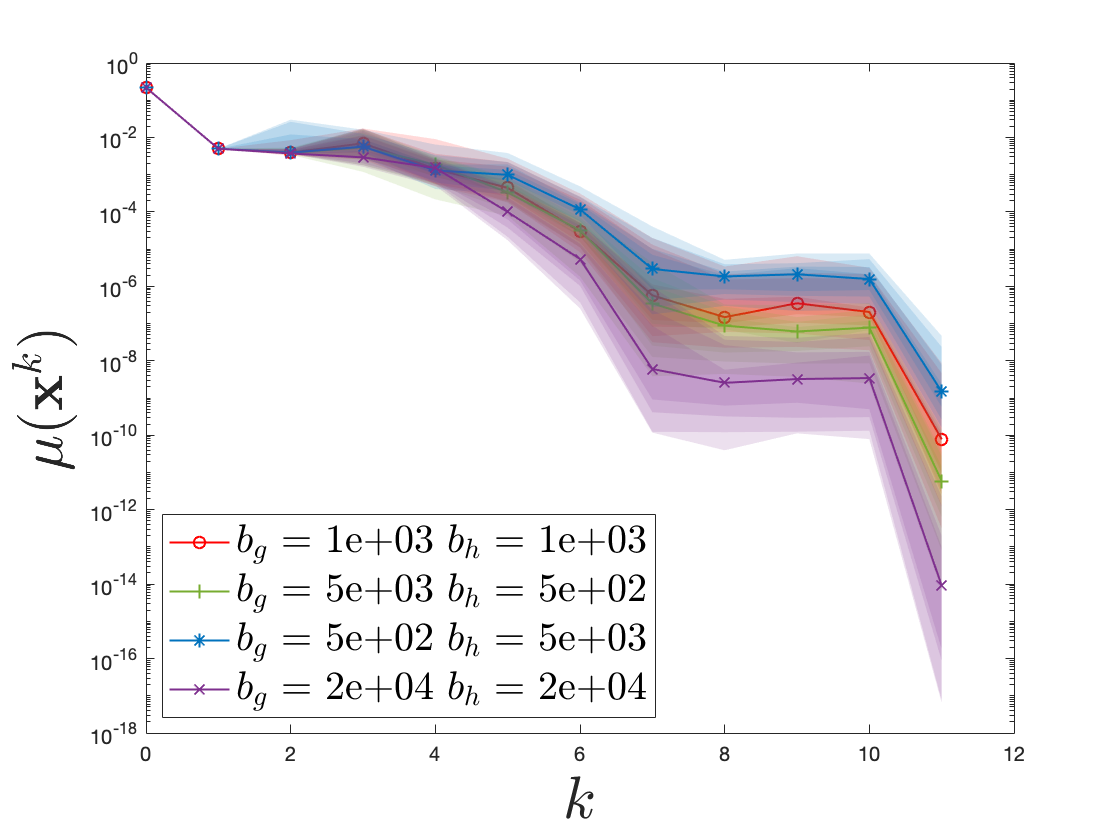}
         \caption*{}
    \end{subfigure}
    \begin{subfigure}[b]{0.49\textwidth}
        \centering
        \includegraphics[width=1.0\textwidth]{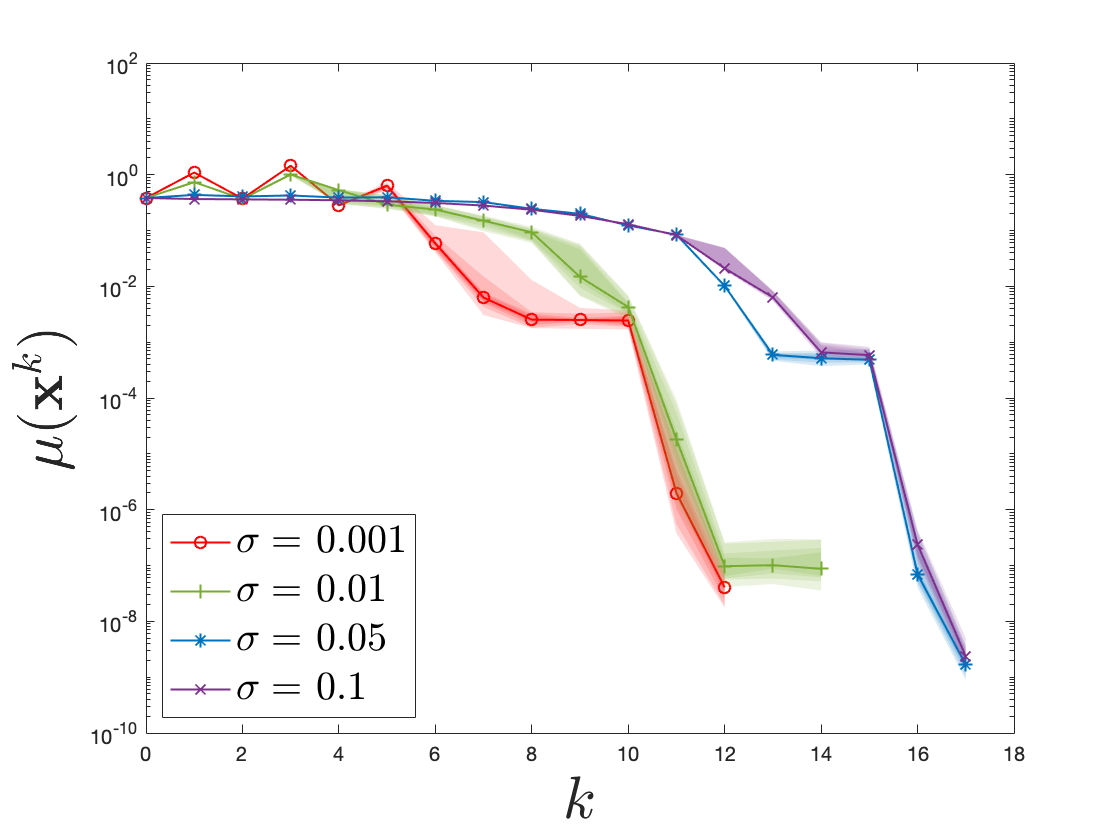}
         \caption*{}
     \end{subfigure}
    \begin{subfigure}[b]{0.49\textwidth}
        \centering
        \includegraphics[width=1.0\textwidth]{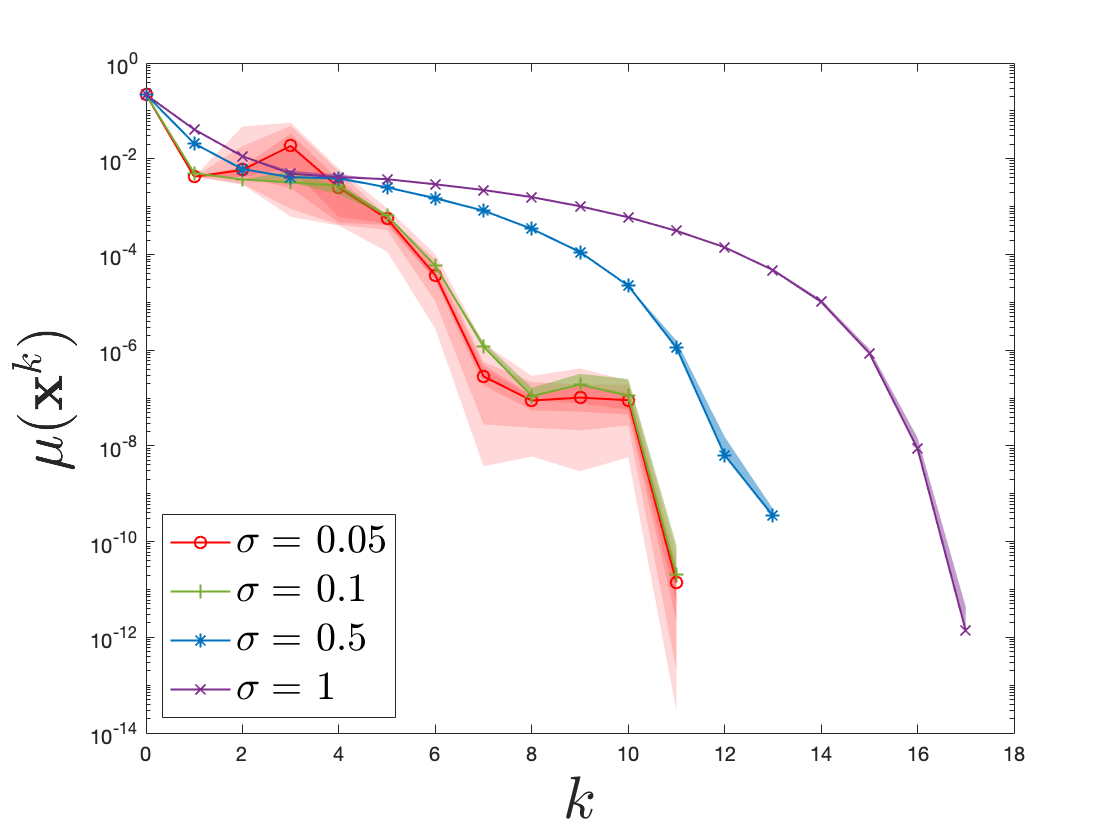}
         \caption*{}
    \end{subfigure}
    \begin{subfigure}[b]{0.49\textwidth}
        \centering
        \includegraphics[width=1.0\textwidth]{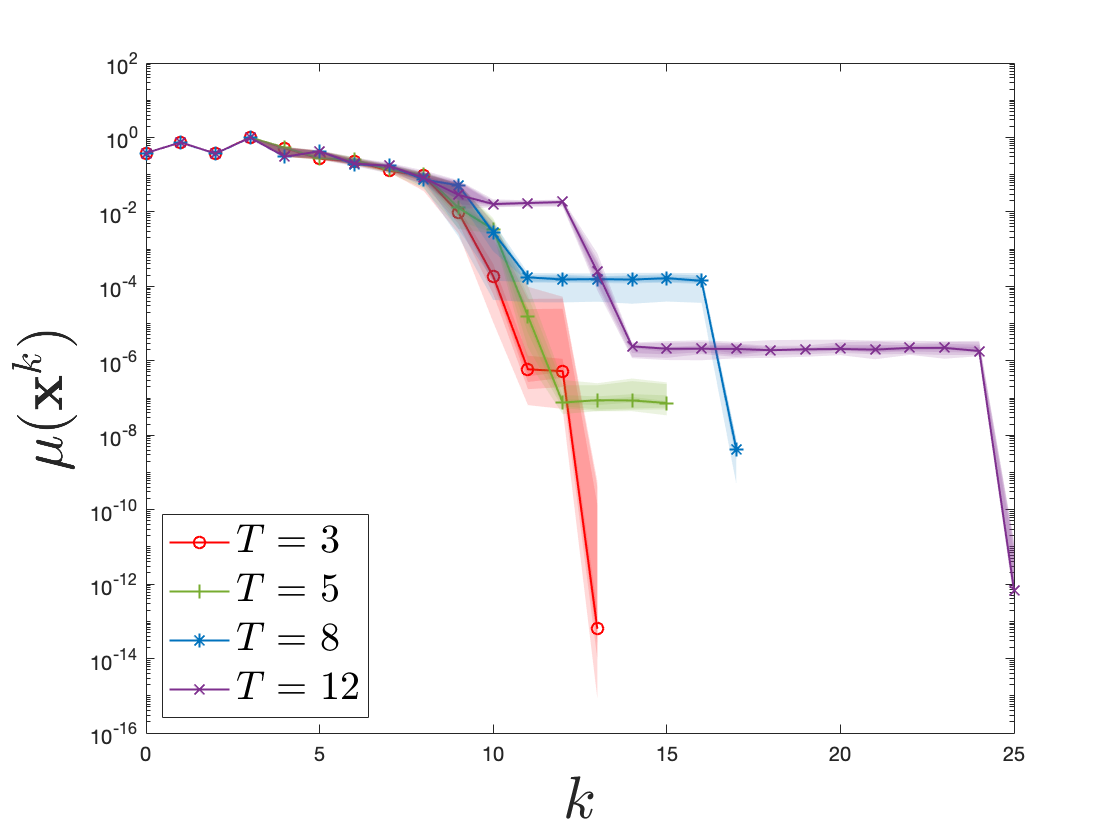}
         \caption*{}
    \end{subfigure}
    \begin{subfigure}[b]{0.49\textwidth}
        \centering
        \includegraphics[width=1.0\textwidth]{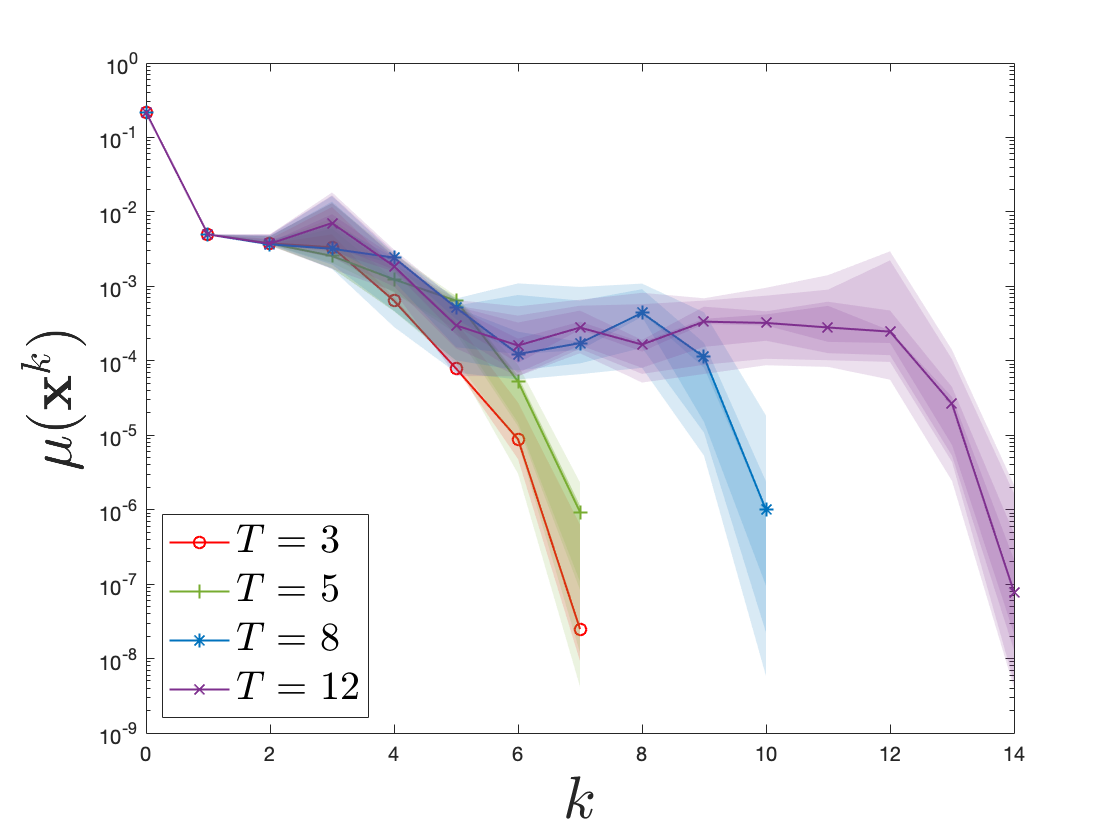}
         \caption*{}
    \end{subfigure}
\end{figure}

Figure~\ref{fig:different_parameter} illustrates the performance of the R-SVRC algorithm for both numerical studies for different settings of the optimization parameters. Left and right columns corresponds to the first and second numerical studies, respectively. Most of the plots show a superlinear rate of convergence to a second-order stationary point using the last iterate as the output of the algorithm. The first row shows that smaller batch sizes result in slower convergence with early oscillation around the plateau. Specifically, the top three lines have ascending values of $b_g$ while descending values of $b_h$ which implies that the effect of $b_g$ is more significant than that of $b_h$. The second row shows that bigger values of $\sigma$ can provide smaller objective values but with slower convergence rate. Furthermore, larger $\sigma$ values tends to produce a smaller $\norm{\bh}$ based on the subproblem \eqref{m_cubic} which leads to more stable and smooth sequences shown in the plots. The third row shows that bigger values of $T$, i.e., less frequent full gradient and Hessian calculations, result in slower rate of convergence for a fixed number of iterations which is also intuitive.

In Figure~\ref{fig:compare_different_methods}, we compare the proposed R-SVRC method with the other three benchmark methods, Riemannian adaptive cubic regularization method (ARC), Riemannian trust region method (RTR) and crude Riemannian cubic regularization method (CRC)~\cite{agarwal2020adaptive,boumal2015riemannian,zhang2018cubic} over the number stochastic oracle calls (see Definition~\ref{def:SO}) and also cpu time. Results show faster decrease by the R-SVRC method compared to the other benchmark methods. 

\begin{figure}
    \centering
    \begin{subfigure}[b]{0.49\textwidth}
        \centering
        \includegraphics[width=1.0\textwidth]{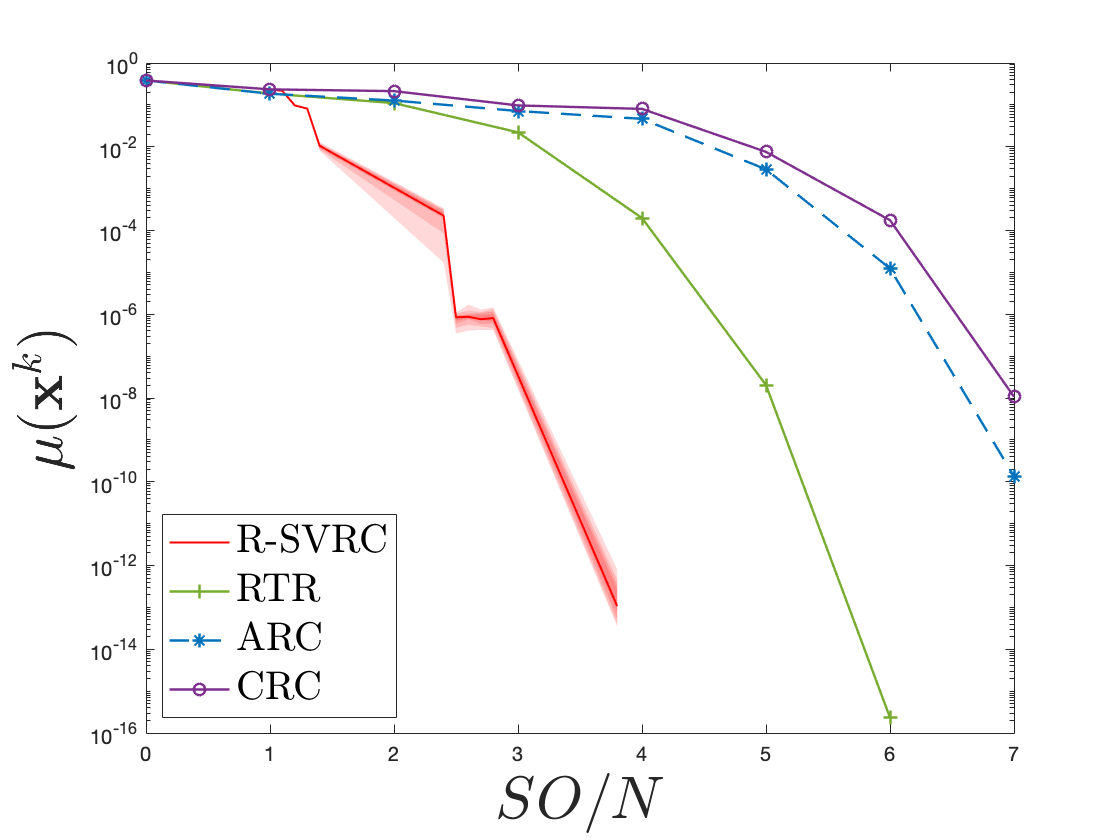}
        \caption*{}
    \end{subfigure}
    \begin{subfigure}[b]{0.49\textwidth}
        \centering
        \includegraphics[width=1.0\textwidth]{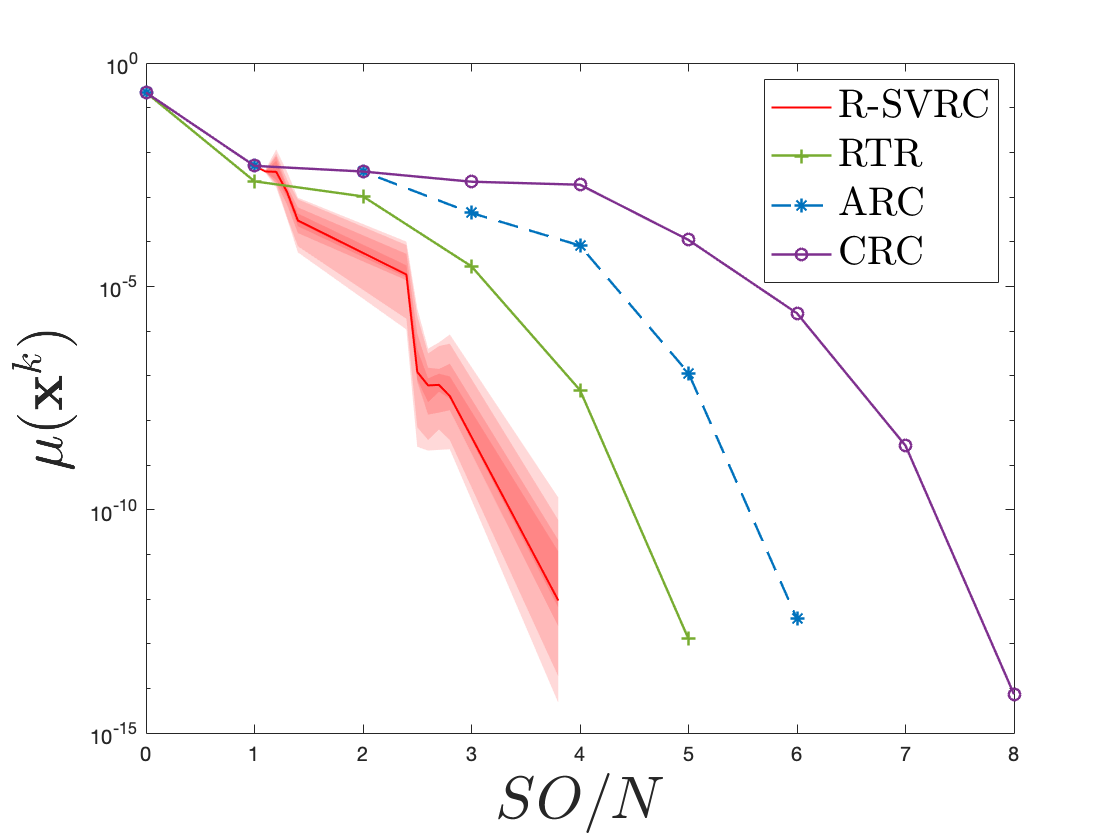}
         \caption*{}
    \end{subfigure}
    \begin{subfigure}[b]{0.49\textwidth}
        \centering
        \includegraphics[width=1.0\textwidth]{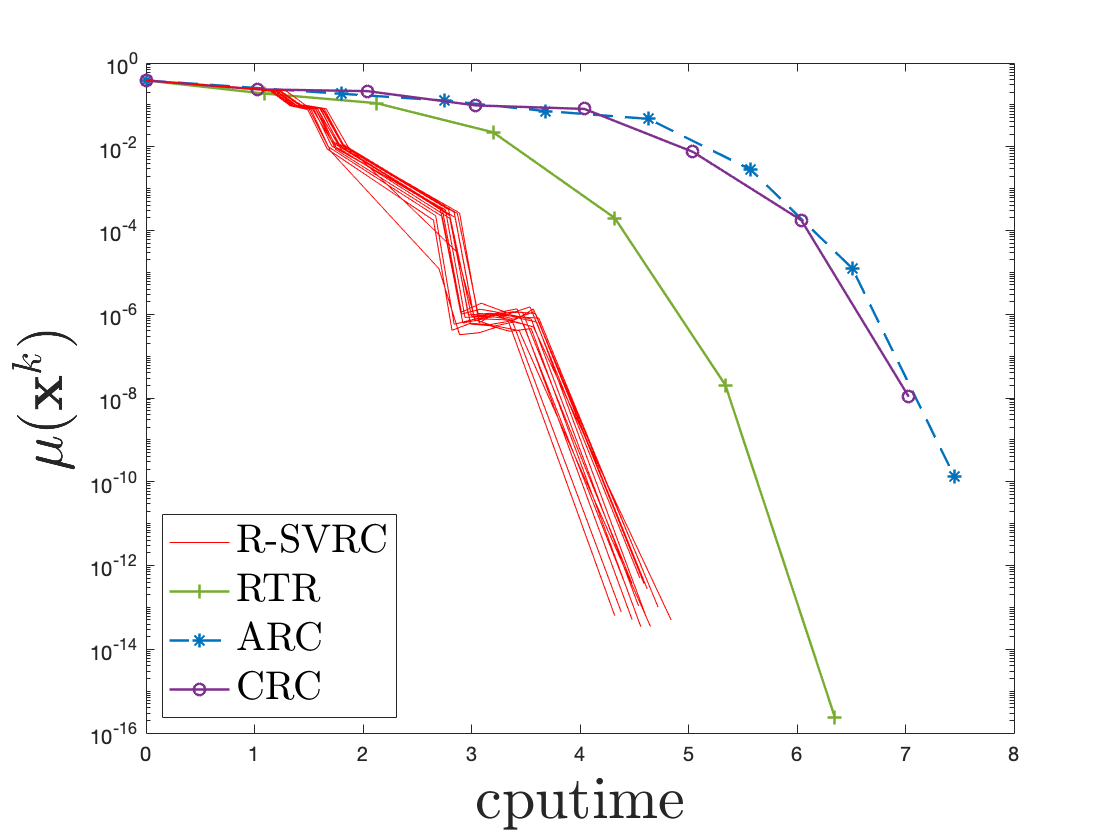}
         \caption*{}
     \end{subfigure}
    \begin{subfigure}[b]{0.49\textwidth}
        \centering
        \includegraphics[width=1.0\textwidth]{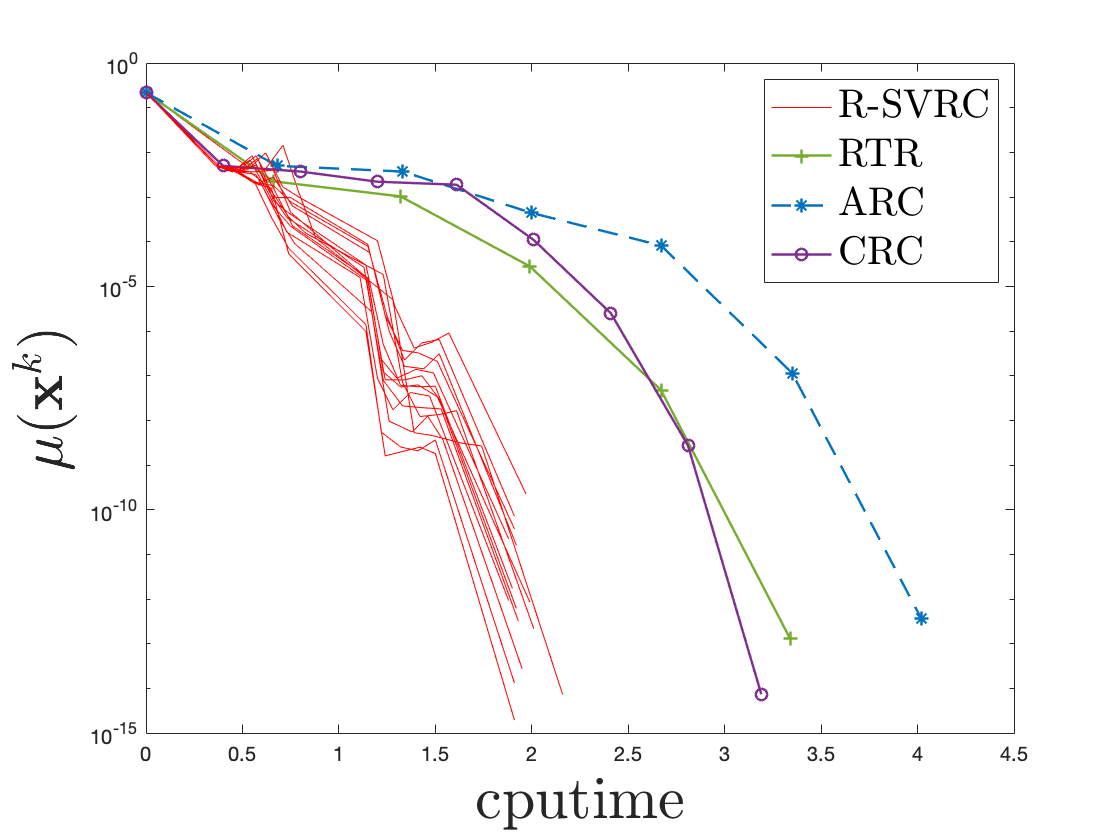}
         \caption*{}
    \end{subfigure}   
    \caption{Performance of the proposed R-SVRC algorithm compared to the three benchmark methods. \textbf{(Left)} Estimating inverse scale matrix of multivariate t-distribution over SPD manifold. \textbf{(Right)} Estimating parameter of the linear classifier over Sphere manifold.  
    }
    \label{fig:compare_different_methods}
\end{figure}

Finally, Figure~\ref{fig:threeDim} visualizes the optimization path obtained by the R-SVRC algorithm over the Sphere manifold. The generated iterates converge to the optimal solution. 

\begin{figure}
        \centering
        \includegraphics[width=0.7\textwidth]{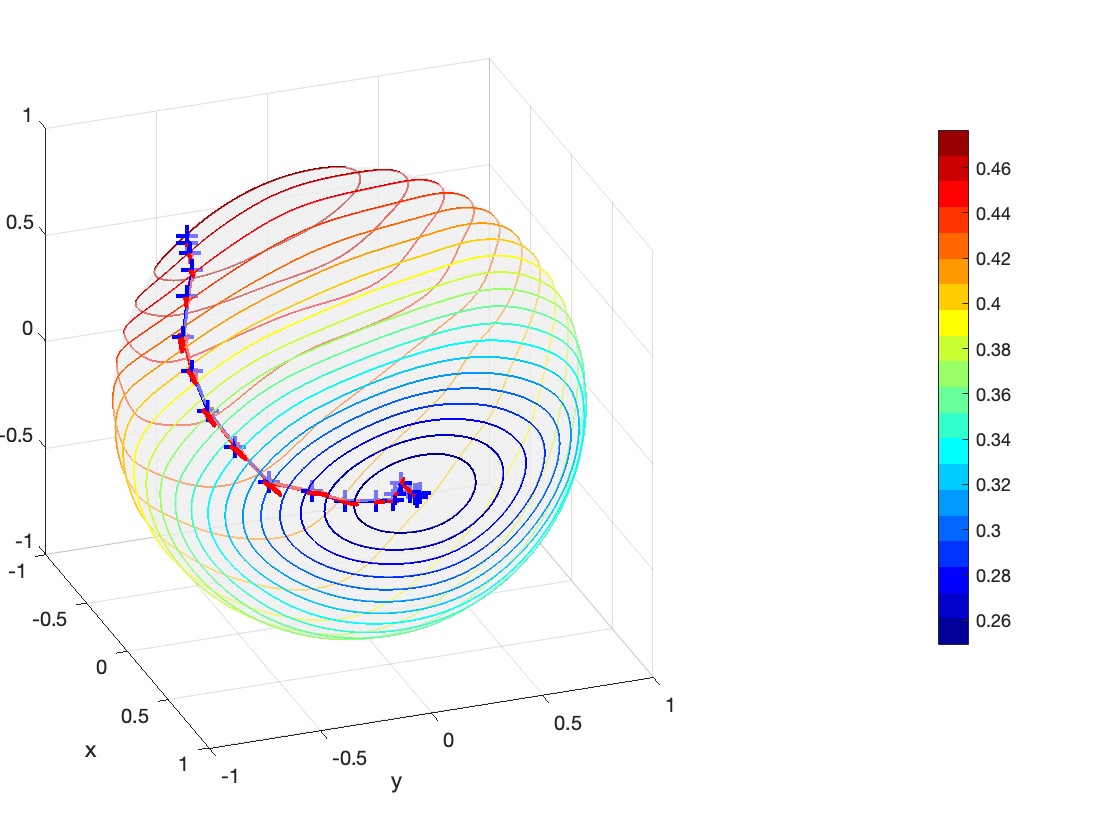}
    \caption{The path of the R-SVRC algorithm iterates over the 2-dimensional instance of the Sphere manifold in the second numerical study.}
    \label{fig:threeDim}
\end{figure}

\vspace{-0.2cm}
\section{Conclusions}\label{sec:conclude}
We developed the Riemannian stochastic variance-reduced cubic-regularized Newton method (R-SVRC) for optimization over Riemannian manifolds embedded in a Euclidean space. The proposed double-loop algorithm requires information on the full gradient and Hessian at the beginning of each epoch (outer loop) but updates the gradient and Hessian within each epoch  in a stochastic variance-reduced fashion. Each iteration requires solving a cubic-regularized Newton subproblem. Iteration complexity of the proposed algorithm to find a second-order stationary points is established which matches the worst-case complexity bounds in the Euclidean setting. Furthermore, a version of the algorithm which only requires an inexact solution to the cubic regularized Newton subproblem is proposed which has the same complexity bound as the exact case. Finally, the performance of the proposed algorithm is evaluated over two numerical studies with symmetric positive definite and sphere manifolds.

\bibliographystyle{authordate1} 
\bibliography{refs}

\end{document}